\theoremstyle{plain}
\newtheorem{theorem}{Theorem}[section]
\newtheorem{lemma}[theorem]{Lemma}
\newtheorem{corollary}[theorem]{Corollary}
\theoremstyle{definition}
\newtheorem{example}[theorem]{Example}
\theoremstyle{remark}
\newcommand{\mc}[1]{\mathcal{#1}}
\newcommand{\frk}[1]{\mathfrak{#1}}
\newcommand{\CC}{\mathbb{C}}
\newcommand{\DD}{\mathbb{D}}
\newcommand{\NN}{\mathbb{N}}
\newcommand{\gra}{\alpha}
\newcommand{\grs}{\sigma}
\newcommand{\grG}{\Gamma}
\newcommand{\bksl}{\backslash}
\newcommand{\nin}{\notin}
\newcommand{\cc}[1]{\overline{#1}}
\DeclareMathOperator{\ran}{ran}
\newcommand{\pd}{\partial}
\newcommand{\of}{\circ}
\newcommand{\BB}{\mathbb{B}}
\newcommand{\ip}[1]{\langle #1 \rangle}
\newcommand{\qand}{\quad\text{and}\quad}
\newcommand{\ol}[1]{\overline{#1}}
\newcommand{\wtil}[1]{\widetilde{#1}}
\newcommand{\bB}{{\mathbb{B}}} 
\newcommand{\bC}{{\mathbb{C}}} 
\newcommand{\bD}{{\mathbb{D}}}
\newcommand{\bN}{{\mathbb{N}}}
  \newcommand{\A}{{\mathcal{A}}} 
	\newcommand{\B}{{\mathcal{B}}}
  \newcommand{\D}{{\mathcal{D}}}
  \newcommand{\F}{{\mathcal{F}}}
\renewcommand{\H}{{\mathcal{H}}}
  \newcommand{\M}{{\mathcal{M}}}
  \newcommand{\N}{{\mathcal{N}}}
\renewcommand{\O}{{\mathcal{O}}}
\renewcommand{\S}{{\mathcal{S}}}
  \newcommand{\Z}{{\mathcal{Z}}}
\newcommand{\fa}{{\mathfrak{a}}}
\newcommand{\fb}{{\mathfrak{b}}}
\newcommand{\fc}{{\mathfrak{c}}}
\newcommand{\fE}{{\mathfrak{E}}}
\newcommand{\fF}{{\mathfrak{F}}}
\newcommand{\fH}{{\mathfrak{H}}}
\newcommand{\fK}{{\mathfrak{K}}}
\newcommand{\fM}{{\mathfrak{M}}}
\newcommand{\fp}{{\mathfrak{p}}}
\newcommand{\fw}{{\mathfrak{w}}}
\newcommand{\fX}{{\mathfrak{X}}}
\newcommand{\rC}{\mathrm{C}}
\DeclareMathOperator{\Ann}{Ann}											% the annihilator of a given tuple
\DeclareMathOperator{\Char}{Char}										% the character space
\DeclareMathOperator{\TSpec}{\sigma_{\mathrm{Ta}}}	% the Taylor joint spectrum
\newcommand{\ZBd}{\mc{Z}_{\BB_d}}										% zero set in the open unit ball
\newcommand{\ZcBd}{\mc{Z}_{\overline{\BB}_d}}				% zero set in the closed unit ball
\DeclareMathOperator{\supp}{supp}										% support of a given ideal
\DeclareMathOperator{\spn}{span}
\newcommand{\pSpec}{\sigma_{\mathrm{p}}}						% (joint) point spectrum
\newcommand{\wSpec}{\sigma_{\mathrm{w}*}}						% the weak-* spectrum
\newcommand{\kZ}[1]{Z^{(#1)}}												% the compression of M_x to the coinvariant subspace determined by #1
\newcommand{\kZa}{\kZ{\frk{a}}}											% the compression to a
\DeclareMathOperator{\Loc}{Loc}											% the localizability set of an ideal
\DeclareMathOperator{\AZ}{AZ}												% the approximate zero set
\newcommand{\MHFC}{\widehat{\alpha}}								% M(H) functional calculus
\newcommand{\MH}{\mathcal{M}(\mathcal{H})}					% Multiplier algebra for H
\newcommand{\AH}{\mathcal{A}(\mathcal{H})}					% Multiplier norm closure of the polynomials
\numberwithin{equation}{section}
\title[Localizable points in the support of an ideal and spectra]{Localizable points in the support of a multiplier ideal and spectra of constrained operators}
\author{Rapha\"el Clou\^atre}
\address{Department of Mathematics, University of Manitoba, Winnipeg, Manitoba, Canada R3T 2N2}
\email{raphael.clouatre@umanitoba.ca\vspace{-2ex}}
\thanks{R.C was partially supported by an NSERC Discovery Grant. E.J.T was partially suppored by a PIMS postdoctoral fellowship.}
\author{Edward J. Timko}
\email{edward.timko@umanitoba.ca\vspace{-2ex}}
\begin{document}

\begin{abstract}
    A unitarily invariant, complete Nevanlinna--Pick kernel $K$ on the unit ball determines a class of operators on Hilbert space called $K$-contractions. We study those $K$-contractions that are constrained, in the sense that they are annihilated by an ideal of multipliers. Our overarching goal is to identify various joint spectra of these constrained $K$-contractions through the vanishing locus of their annihilators. Our methods are based around a careful analysis of a subset of the ball associated to the annihilator, which we call its support. 
    For the functional models, we show how this support completely determines several natural joint spectra. The picture is more complicated for general $K$-contractions, as their spectra can be properly contained in the support. Nevertheless, the ``localizable'' portion of the support always consists of spectral points. When the support is assumed to be small in an appropriate sense, we manage to effectively detect points of localizability.
    
\end{abstract}

\maketitle

\section{Introduction} %%%%%%%%%%%%%%%%%%%%%%%%%%%%%%%%%%%%%%%%%%%%%%%%%%%%%

The purpose of this paper, in simplest terms, is to explore the extent to which holomorphic constraints on commuting Hilbert space operators capture spectral information. A manifestation of this paradigm is already visible in elementary matrix theory: the spectrum of a square matrix is given as the zero set of its minimal polynomial. We aim to establish infinite-dimensional multivariate analogues of this basic fact. To do so, we take inspiration from the theory of contractions of class $C_0$, which we now briefly review.

Let $\DD\subset\bC$ denote the open unit disc and let $H^\infty(\DD)$ be the Banach algebra of bounded holomorphic functions on $\DD$. Recall that a contraction $A$ on a Hilbert space $\frk{H}$ is said to be \textit{absolutely continuous} if $A$ admits a weak-$*$ continuous functional calculus 
\[
f\mapsto f(A), \quad f\in H^\infty(\DD).
 \]
The contraction $A$ is said to be \textit{constrained} (or \textit{of class $C_0$}) when the weak-$*$ closed ideal $\{f\in H^\infty(\DD):f(A)=0\}$ is non-trivial. In this case, it is a consequence of Beurling's theorem that this ideal is generated by an inner function $\theta$. Associated with this inner function is a subset $\supp(\theta)$ of $\cc{\DD}$ called its \textit{support}, which can be described as follows.
A point $z\in\cc{\DD}$ is in the support of $\theta$ if either $\theta(z)=0$ or  $\theta$ cannot be analytically continued through $z$.
It is shown in \cite[Theorem 7]{SzNagyFoias1964_ContractVII} (see also \cite[Theorem 2.4.11]{bercovici1988}) that the spectrum of $A$ satisfies
\begin{equation}\label{Eq:SpecEqSuppd1}
	\sigma(A)=\supp(\theta).
\end{equation}
Moreover, the zero set of $\theta$ coincides with the point spectrum of $A$.

We wish to generalize \eqref{Eq:SpecEqSuppd1} in two ways. First, we will be taking a multivariate point of view, where single operators are replaced by finite tuples of commuting operators. Such a perspective fits in with our ongoing effort \cite{CTcyc} to develop a generalization of some of the finer points of the classical Sz.-Nagy--Foias theory of contractions \cite{nagy2010}, much in the spirit Popescu's ambitious and robust program (see \cite{popescu2006} and references therein). The appropriate replacement for the algebra $H^\infty(\DD)$ in this case is well known to be the multiplier algebra of the Drury--Arveson space on the unit ball. The second main aspect of our generalization is that we will be working with operators whose structure is encoded by a wider class of algebras of functions. Whereas the Hardy space and the Drury--Arveson space are the natural settings to study contractions and row contractions, we will be considering commuting tuples of operators which are subject to more general conditions. Let us be more precise regarding this particular point.

Throughout, $d\geq 1$ will be a fixed positive integer. We let $\BB_d\subset \CC^d$ denote the open unit ball. We let $\H$ be a Hilbert function space on $\BB_d$ with a regular unitarily invariant reproducing  kernel $K$. More concretely, this means that
\[ K(z,w)=1+\sum_{n=1}^\infty a_n\ip{z,w}^n, \quad z,w\in\BB_d, \]
for some sequence of positive numbers $(a_n)_n$ satisfying
\[ \lim_{n\to\infty}\frac{a_n}{a_{n+1}}=1. \]
We also assume that $K$ satisfies the complete Nevanlinna--Pick  property, which in this context is equivalent to the existence of a uniquely determined non-negative sequence $(b_n)_n$ such that
\[ 1-\frac{1}{K(z,w)}=\sum_{n=1}^n b_n\ip{z,w}^n, \quad z,w\in\BB_d. \]
The class of operators we will be interested in are those commuting $d$-tuples  $T=(T_1,\ldots,T_d)$ on some Hilbert space $\frk{H}$ with the property that 
\[ \sum_{n=1}^\infty b_{n}\sum_{|\alpha|=n}\frac{n!}{\alpha!}T^{\alpha}T^{*\alpha} \leq I; \]
we then say that $T$ is a \emph{$K$-contraction}. The motivation for this choice of terminology becomes clear upon looking at the prototypical example where $\H$ is simply the classical Hardy space on the disc. In this case, 
$
 K(z,w)=(1-\ol{w}z)^{-1}
$
so that $a_n=1$ for every $n\geq 1$, while $b_1=1$ and $b_n=0$ for every $n\geq 2$. Thus, in this context a $K$-contraction is simply a usual contraction. Likewise, when $K$ is the kernel of the Drury--Arveson space, the notion of $K$-contraction reduces to that of a row contraction. But this setup includes several other meaningful function spaces, such as the Dirichlet space on the disc (see Example \ref{E:Hs} for an entire scale of such spaces). Based on foundational work of Agler \cite{agler1982} and significant contributions of Ambrozie--Engli\u s--M\"uller \cite{ambrozie2002}, there is an ongoing effort  to tackle many function spaces and their associated $K$-contractions in one fell swoop \cite{CH2016},\cite{BHM2018}, and this paper is yet another contribution towards that goal. 

The multiplier algebra of $\H$ is denoted by $\M(\H)$. When $K$ is a regular unitarily invariant complete Nevanlinna--Pick kernel, the coordinate functions $x_1,\ldots,x_d$ are multipliers, and we denote by $\AH$ the norm closure of the polynomials inside of $\M(\H)$. An absolutely continuous $K$-contraction $T$ admits a weak-$*$ continuous $\M(\H)$-functional calculus. 
The \emph{annihilator} of $T$ is defined to be the kernel of the associated functional calculus. Unlike the situation described for single contractions of class $C_0$, the annihilator is not typically singly generated, so we will often have to work with ideals rather than minimal functions.

Now that we have set up terminology and notation, we may describe the content and organization of the paper. Section \ref{S:Prelims} deals with preliminaries on multivariate spectral theory, reproducing kernels and $K$-contractions. Several of the theorems therein are based on results known in other contexts  but we chose to provide complete arguments for completeness. Most notably, Theorems \ref{T:LatIsomThm} and \ref{T:QCIsom} extend results mentioned in \cite[Section 2]{davidsonramseyshalit2015} for the Drury--Arveson space. Our novel contributions are found in subsequent sections.

In Section \ref{S:Spec}, we introduce one of the main actors of the paper, namely the so-called \emph{support} of an ideal of multipliers. We view this set as analogous to the support of an inner function, as defined previously. The definition that we give of the support is not as transparent as the explicit function theoretic description that is available in $H^\infty(\bD)$, however. We remedy this shortcoming somewhat by relating the support to what we call the \emph{approximate zero set} of the ideal (see Theorems \ref{T:suppEqAZ} and \ref{T:suppAZTheta}). One of our main results is the following (Theorem \ref{T:suppEqSpec}), which shows that for the concrete functional models, the support precisely encodes spectral information, and a perfect analogue of \eqref{Eq:SpecEqSuppd1} holds.

\begin{theorem}
	Let $\frk{a}$ be a weak-$*$ closed ideal of $\MH$.
	Denote by $\kZa$ the compression of the coordinate multipliers to the co-invariant subspace $\mc{H}\ominus[\frk{a}\mc{H}]$. Let $\B_\fa$ denote the weak-$*$ closed unital algebra generated by $\kZa$. Then, $\supp(\fa)$ coincides with both the Taylor spectrum of $\kZa$ and the Banach algebraic spectrum of $\kZa$ in $\B_\fa$.
	\label{T:Intro:Spec}
\end{theorem}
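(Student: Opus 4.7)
The plan is to establish the chain of inclusions
\[
\supp(\fa) \subseteq \TSpec(\kZa) \subseteq \sigma_{\B_\fa}(\kZa) \subseteq \supp(\fa).
\]
The middle containment is standard commutative multivariable spectral theory: a joint inverse of $\kZa - \lambda$ in a commutative Banach algebra of operators yields an explicit splitting of the Koszul complex at $\lambda$, hence Taylor invertibility. So the work is concentrated in the two outer containments.

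For the leftmost inclusion, I would invoke Theorem \ref{T:suppEqAZ} to replace $\supp(\fa)$ with the approximate zero set $\AZ(\fa)$ and then exploit the reproducing kernel. When $\lambda \in \BB_d$ is an exact common zero of $\fa$, the vector $K(\cdot,\lambda)$ is orthogonal to $[\fa\H]$, hence sits in the co-invariant subspace $\H \ominus [\fa\H]$, where it is a joint eigenvector of $\kZa^*$ at $\ol{\lambda}$. Thus $\lambda$ lies even in the point spectrum, and therefore in $\TSpec(\kZa)$. For a general $\lambda \in \AZ(\fa)$ (possibly on $\partial \BB_d$), I would approximate by points $\lambda_n$ at which prescribed finite families of multipliers in $\fa$ are arbitrarily small, and assemble joint approximate eigenvectors from the normalized kernels $K(\cdot,\lambda_n)/\|K(\cdot,\lambda_n)\|$, placing $\lambda$ in the approximate point spectrum of $\kZa^*$ and therefore in $\TSpec(\kZa)$.

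For the rightmost inclusion, fix $\lambda \in \sigma_{\B_\fa}(\kZa)$. Since $\B_\fa$ is a commutative unital weak-$*$ closed algebra generated by $\kZa$, its joint spectrum is the character-space image of the coordinate tuple, so there is a character $\chi$ of $\B_\fa$ with $\chi(\kZa_i) = \lambda_i$ for every $i$. Via the lattice and quotient-compression isomorphism results (Theorems \ref{T:LatIsomThm} and \ref{T:QCIsom}), this character lifts to a weak-$*$ continuous character of $\MH$ that annihilates $\fa$ and sends the coordinate multipliers to $\lambda_1,\ldots,\lambda_d$. I would then argue that the existence of such a character forces $\lambda$ to be an approximate zero of $\fa$ in $\ol{\BB_d}$, via a Gelfand-type argument applied to finite subfamilies of $\fa$ together with the coordinate functions; Theorem \ref{T:suppEqAZ} then returns $\lambda \in \supp(\fa)$.

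The main obstacle I anticipate is this last step. Unlike the scalar case, where Beurling's theorem reduces the analysis to a single inner function whose analytic continuation across the complement of $\supp(\theta)$ manufactures the resolvent, the ideal $\fa$ here can be genuinely multigenerated and the passage between approximate zeros and weak-$*$ continuous characters requires care. I expect it will be necessary to combine a careful description of the maximal ideal space of $\MH$ restricted to weak-$*$ continuous characters with the complete Nevanlinna--Pick structure of $K$, which is precisely what guarantees that such characters are parametrized by points of $\ol{\BB_d}$ and behave well with respect to approximate vanishing. An alternative more constructive route, which I would pursue in parallel, is to produce, for each $\lambda \notin \supp(\fa)$, multipliers $g_1,\ldots,g_d \in \MH$ satisfying a Bezout identity $1 - \sum_i (x_i - \lambda_i)g_i \in \fa$; compressing this identity to $\H \ominus [\fa\H]$ exhibits the required inverse inside $\B_\fa$.
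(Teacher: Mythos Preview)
Your rightmost inclusion is fine: the ``alternative constructive route'' you mention at the end is exactly the definition of $\supp(\fa)$, and compressing the resulting Bezout identity to $\H(\fa)$ gives $\lambda\notin\sigma_{\B_\fa}(\kZa)$ immediately. This is the content of Lemma~\ref{L:wSpecInSupp}, and there is no need for the detour through characters and approximate zeros.

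The leftmost inclusion, however, has two genuine problems. First, you invoke Theorem~\ref{T:suppEqAZ} to identify $\supp(\fa)$ with $\AZ(\fa)$, but in the paper that theorem is proved \emph{using} the present result (its proof begins ``It follows from Theorem~\ref{T:suppEqSpec} that $z\in\wSpec(\kZa)$\ldots''), so your argument is circular. Moreover, Theorem~\ref{T:suppEqAZ} requires the Corona property, which is not assumed here. Second, even if one grants $\supp(\fa)=\AZ(\fa)$, the approximate-eigenvector step for boundary $\lambda$ does not go through as stated: the points $w_n$ witnessing $\lambda\in\AZ(\fa)$ need not lie in $\ZBd(\fa)$, so the kernels $k_{w_n}$ need not belong to $\H(\fa)=\H\ominus[\fa\H]$. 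Projecting them onto $\H(\fa)$ and verifying that the projections stay bounded away from zero and remain approximate joint eigenvectors for $(\kZa)^*$ is exactly the nontrivial analytic content you would need to supply, and it is not clear how to do this in general.

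The paper's argument for boundary points avoids approximate zeros altogether. Given $z\in\partial\BB_d\setminus\TSpec(\kZa)$, the function $g=1/\langle x-z,z\rangle$ is holomorphic on a neighbourhood of $\TSpec(\kZa)$, so the Taylor functional calculus produces $g(\kZa)\in\{\kZa\}''$ with $\sum_j(\kZa_j-z_jI)\bar z_j\,g(\kZa)=I$. Commutant lifting (\cite[Theorem~5.1]{BTV2001}) then supplies $f\in\M(\H)$ with $f(\kZa)=g(\kZa)$, and Theorem~\ref{T:QCIsom} turns this into $1-\sum_j(x_j-z_j)\bar z_j f\in\fa$, i.e.\ $z\notin\supp(\fa)$. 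This is the step your proposal is missing: the Taylor calculus plus commutant lifting manufactures the Bezout multiplier directly, without any appeal to Corona or to approximate vanishing.
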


In Section \ref{S:Loc}, we examine whether \eqref{Eq:SpecEqSuppd1} is valid for general absolutely continuous $K$-contractions. We observe in 
Lemma \ref{L:wSpecInSupp} and Example \ref{E:SmallSpecLargeSupp} that while one inclusion always holds, it may be strict in general. We introduce a certain notion of \emph{localizability} of an ideal, which aims to fix this pathology. The following is Theorem \ref{T:LocInSigPrime}. 

\begin{theorem}
	Let $T$ be an absolutely continuous $K$-contraction and let $\sigma(T)$ denote its Banach algebraic spectrum in the weak-$*$ closed algebra that $T$ generates. Let $\fa$ denote the annihilator of $T$. Then, we have that
	\[ \Loc(\fa)\subset \sigma(T) \subset \supp(\fa). \]
	\label{T:Intro:LocLower}
\end{theorem}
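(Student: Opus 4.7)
The inclusion $\sigma(T) \subset \supp(\fa)$ is exactly Lemma~\ref{L:wSpecInSupp}, so the task reduces to establishing $\Loc(\fa) \subset \sigma(T)$. My plan is to argue by contraposition: assuming $\lambda \notin \sigma(T)$, I will produce an explicit element of $\fa$ that witnesses the failure of localizability at $\lambda$.

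Since $T$ is an absolutely continuous $K$-contraction, it admits a weak-$*$ continuous $\MH$-functional calculus $\Phi_T : \MH \to \B(\frk{H})$ with $\ker \Phi_T = \fa$. By the quotient isomorphism (Theorem~\ref{T:QCIsom}), $\Phi_T$ descends to a weak-$*$ homeomorphic unital algebra isomorphism between $\MH/\fa$ and the weak-$*$ closed algebra generated by $T$. Because $\lambda \notin \sigma(T)$, the commuting tuple $(T_1 - \lambda_1, \ldots, T_d - \lambda_d)$ is jointly invertible in this commutative Banach algebra, so there exist elements $Y_1, \ldots, Y_d$ in it with $\sum_{i=1}^d Y_i(T_i - \lambda_i) = I$. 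Lifting each $Y_i$ through the quotient isomorphism to a multiplier $\phi_i \in \MH$, I obtain
\[
g := 1 - \sum_{i=1}^d \phi_i(x_i - \lambda_i) \in \fa, \qquad g(\lambda) = 1.
\]

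The concluding step is to exploit the very restrictive form of $g$. The definition of $\Loc(\fa)$ given in Section~\ref{S:Loc} is designed so that points at which $\fa$ contains a multiplier congruent to $1$ modulo the ideal generated by the local coordinate shifts $(x_1-\lambda_1,\ldots,x_d-\lambda_d)$ are excluded from localizability: morally, such a $g$ becomes a unit in any reasonable local algebra at $\lambda$, trivializing any local quotient construction attached to $\fa$. The congruence $1 \equiv \sum_i \phi_i(x_i - \lambda_i) \pmod{\fa}$ therefore forces $\lambda \notin \Loc(\fa)$, completing the contraposition.

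The main technical obstacle I foresee is not the production of $g$, which is a soft consequence of the functional calculus together with the quotient isomorphism, but rather the precise reconciliation of the identity above with whatever formulation of $\Loc$ is adopted in Section~\ref{S:Loc}. Depending on whether localizability is phrased in terms of a non-trivial local quotient module, a Koszul-type exactness condition, or an approximate-zero criterion, the last step may be immediate or may require a brief auxiliary lemma; in all natural formulations, however, the existence of a multiplier in $\fa$ that is a unit at $\lambda$ modulo the maximal ideal at $\lambda$ is the very obstruction that $\Loc$ is built to detect.
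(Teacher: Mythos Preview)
Your argument contains a fatal error at the lifting step. Theorem~\ref{T:QCIsom} establishes a completely isometric weak-$*$ isomorphism between $\MH/\fa$ and the compression algebra $\B_\fa$ acting on $\H(\fa)$; it says nothing about a general absolutely continuous $K$-contraction $T$ with $\Ann(T)=\fa$. The functional calculus $\Phi_T$ does descend to an injective contractive homomorphism $\MH/\fa\to\B(\frk{H})$, but its range need not be the entire weak-$*$ closed algebra generated by $T$. Concretely, your operators $Y_i$ lie in that algebra, yet there may be no multipliers $\phi_i$ with $\phi_i(T)=Y_i$.

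This is not a minor technicality: if your lifting were valid, the element $g=1-\sum_i\phi_i(x_i-\lambda_i)\in\fa$ you produce is \emph{precisely} the definition of $\lambda\notin\supp(\fa)$, so your contraposition would actually prove $\supp(\fa)\subset\wSpec(T)$ and hence $\supp(\fa)=\wSpec(T)$. Example~\ref{E:SmallSpecLargeSupp} refutes this: for $T=(A,0)$ with $A$ quasinilpotent but not nilpotent, one has $\wSpec(T)=\{(0,0)\}$ while $\supp(\Ann(T))=\ol{\bD}\times\{0\}$. At $\lambda=(1/2,0)$ the inverse $(A-\tfrac12 I)^{-1}$ lies in the weak-$*$ closed algebra generated by $T$ but is not $\phi(T)$ for any $\phi\in\MH$, since any such $\phi$ would have to satisfy $\phi(z,0)=(z-\tfrac12)^{-1}$ on $\bD$, which is unbounded.

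Your guess about the definition of $\Loc(\fa)$ is also off. It is not formulated in terms of units modulo the maximal ideal at $\lambda$; rather, $z\in\Loc(\fa)$ means that for every $r>0$ one can find weak-$*$ closed ideals $\fb,\fc$ with $\fb\fc\subset\fa$, $\fc\not\subset\fa$, and $\supp(\fa)\cap\supp(\fb)\subset B(z,r)$. The paper's proof works directly from this: given such $\fb,\fc$, the closed subspace $\fM=\ol{\spn\{\ran f(T):f\in\fc\}}$ is non-zero and invariant for $\{T\}'$, and one shows $\sigma'(T|_\fM)\subset\supp(\fa)\cap\supp(\fb)\subset B(z,r)$ using Lemma~\ref{L:wSpecInSupp} together with the hereditary inclusion $\sigma'(T|_\fM)\subset\sigma'(T)$ for commutant-invariant subspaces. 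Letting $r\to 0$ gives $z\in\sigma'(T)\subset\wSpec(T)$.
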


Notably, the localizable set of a weak-$*$ closed ideal in $H^\infty(\bD)$ coincides with its support, so that the previous result is a genuine generalization of \eqref{Eq:SpecEqSuppd1}. On the other hand, the utility of the localizable set as a lower bound on $\sigma(T)$ is highly dependent on the dimension of the zero set of the annihilator. For instance, we show in Theorem \ref{T:PrimeIdealEmptyLoc} that the localizable set of a prime ideal is empty  when its zero set has positive dimension as a complex analytic variety. Conversely, when the zero set is zero-dimensional or the support is highly disconnected, more information can be extracted (see Corollary  \ref{C:LocDimZero} and Theorem \ref{T:TotDiscon}). In the following, we assume that the sequence $(b_n)_n$ associated to $K$ satisfies $\sum_{n=1}^\infty b_n=1$.

\begin{theorem}
	Let $T$ be an absolutely continuous $K$-contraction and let $\sigma(T)$ denote its Banach algebraic spectrum in the weak-$*$ closed algebra that it generates. Let $\fa$ denote the annihilator of $T$. Then, the following statements hold.
	\begin{enumerate}[{\rm (i) }]
	 \item If the zero set of $\fa$ is discrete, then
	\[ \Loc(\fa)\cap\BB_d=\sigma(T)\cap\BB_d=\supp(\fa)\cap\BB_d. \]
	\item If the support of $\fa$ is totally disconnected, then
	\[ \Loc(\fa)=\sigma(T)=\supp(\fa). \]
	\end{enumerate}
	\label{T:Intro:Zdim0}
\end{theorem}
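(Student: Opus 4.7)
The plan is to start from Theorem \ref{T:Intro:LocLower}, which already supplies the chain $\Loc(\fa)\subset\sigma(T)\subset\supp(\fa)$. Hence, to obtain the equalities in both (i) and (ii), we only need the reverse inclusion at the level of the outer terms (intersected with $\BB_d$ in case (i)). Throughout, we use the fact, recorded in the introduction, that inside the open ball the support of $\fa$ coincides with the zero set $Z(\fa)\cap\BB_d$.

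The key topological observation unifies both cases. In case (ii), total disconnectedness of $\supp(\fa)$ immediately provides, at each point $z_0\in\supp(\fa)$, a neighborhood basis of clopen subsets of $\supp(\fa)$. In case (i), for an isolated point $z_0\in Z(\fa)\cap\BB_d$, choose an open $V\subset\BB_d$ with $V\cap Z(\fa)=\{z_0\}$. Since $\BB_d$ is open in $\ol{\BB}_d$, the set $V$ is open in $\ol{\BB}_d$, so $\{z_0\}$ is relatively open in $\supp(\fa)$; as a singleton in a Hausdorff space it is automatically closed, hence clopen. In either setting, each point to be shown localizable then admits a supply of arbitrarily small clopen neighborhoods within $\supp(\fa)$.

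The main analytic input is the \v{S}ilov idempotent theorem applied to the commutative unital (dual) Banach algebra $\B_\fa$. By Theorem \ref{T:Intro:Spec}, the Banach algebraic spectrum of $\kZa$ in $\B_\fa$ equals $\supp(\fa)$, which is in turn naturally identified with the image of the Gelfand spectrum of $\B_\fa$ under evaluation on $\kZa$. Consequently, a clopen subset $U\subset\supp(\fa)$ corresponds to a clopen subset of the Gelfand spectrum, and \v{S}ilov's theorem yields a non-trivial idempotent $e_U\in\B_\fa$ whose Gelfand transform is the characteristic function of $U$. This idempotent commutes with $\kZa$ and gives a reducing decomposition of the model space $\H\ominus[\fa\H]$ on which $\kZa$ splits into two tuples with spectra $U$ and $\supp(\fa)\setminus U$ respectively. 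Running the construction for clopen neighborhoods of $z_0$ that shrink down to $\{z_0\}$ (in case (i) one such step already suffices) should, by design of the notion $\Loc(\fa)$, witness that $z_0\in\Loc(\fa)$.

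The most delicate step will be the final translation from the functional-model setting, where the \v{S}ilov idempotent naturally lives, back to the ideal-theoretic notion of $\Loc(\fa)$ introduced in Section \ref{S:Loc}. This bridge is expected to be mediated by absolute continuity of $T$: the weak-$*$ $\MH$-functional calculus should carry $e_U$ to a spectral projection on $\frk{H}$, producing a decomposition $T=T_U\oplus T_{U^c}$ with $\sigma(T_U)\subset \ol{U}$ and $\sigma(T_{U^c})\subset\supp(\fa)\setminus U$, after which $z_0\in\ol{U}$ and the collapsing $U\to\{z_0\}$ identifies $z_0$ as a spectral point visible through localization. Performing this commutation carefully and matching the output against the precise definition of $\Loc$ is where the bulk of the bookkeeping (rather than any new idea) will be concentrated.
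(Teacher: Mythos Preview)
Your overall strategy matches the paper: reduce to $\supp(\fa)\subset\Loc(\fa)$ via the chain in Theorem \ref{T:Intro:LocLower}, and then exploit clopen neighborhoods in $\supp(\fa)$ (this is exactly Corollary \ref{C:ClopenLem} combined with Corollary \ref{C:LocDimZero} and Theorem \ref{T:TotDiscon}). The topological observations in your second paragraph are correct.

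The genuine gap is in your final paragraph. The set $\Loc(\fa)$ is defined purely in terms of ideals of $\MH$: to place $z_0$ in it you must exhibit weak-$*$ closed ideals $\fb,\fc$ with $\fc\not\subset\fa$, $\fb\fc\subset\fa$, and $\supp(\fa)\cap\supp(\fb)\subset B(z_0,r)$. Carrying $e_U$ to $\fH$ via $\widehat{\alpha}_T$ and splitting $T$ produces no such ideals; $\Loc(\fa)$ depends only on $\fa$, not on which $T$ realizes it as annihilator, so no argument that passes through a particular $T$ can conclude. (Note too that $e_U\in\B_\fa\cong\MH/\fa$, not $\MH$; a lift is only idempotent modulo $\fa$, so $\widehat{\alpha}_T$ does not even apply to $e_U$ directly.) The paper instead stays with the model: from the clopen split $\supp(\fa)=C_1\cup C_2$ it forms $Q_i=\chi_{U_i}(\kZa)$ via the Taylor calculus and sets $\fa_i=\Ann(\kZa|_{\ran Q_i})$. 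The substantive step is Theorem \ref{T:FactorOverCompacts}, which pins down $\supp(\fa_i)=C_i$ and $\fa=\fa_1\cap\fa_2$; this uses the dilation-theoretic Theorem \ref{T:KNotEmptyIntersectNotEmpty} and the maximality hypothesis, and is precisely the ``bookkeeping'' you are missing.

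Your \v{S}ilov idea can be salvaged if you redirect it toward ideals rather than toward $T$: pull back the weak-$*$ closed ideals $(1-e_U)\B_\fa$ and $e_U\B_\fa$ along the quotient $\MH\to\B_\fa$ to get $\fb,\fc\supset\fa$. Then $\fb\fc\subset\fa$, $\fc\not\subset\fa$, and since $\B_\fb\cong e_U\B_\fa$ one reads off from Theorem \ref{T:suppEqSpec} that every character of $\B_\fb$ sends $e_U$ to $1$, whence $\supp(\fb)\subset U\subset B(z_0,r)$. This is a legitimate alternative to the paper's Theorem \ref{T:FactorOverCompacts}, but it is not the argument you sketched.
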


Similar results are obtained for the Taylor spectrum, under a natural purity condition on the $K$-contraction (see Theorem \ref{T:PureDimZero}).

It should be noted that in the classical situation of $H^\infty(\bD)$, the part of the support of an ideal that lies in $\DD$ is always discrete. On the other hand,  the support as a whole need not be totally disconnected.  It is thus conceivable that part (ii) of Theorem \ref{T:Intro:Zdim0} holds under an assumption weaker than total disconnectedness of the support. Indeed, much is still to be understood on this topic, especially in regards to the spectral points that lie in the unit sphere.

\textbf{Acknowledgements.} We would like to express our thanks to J\"{o}rg Eschmeier for his advice on matters of spectral theory.
His suggestions have allowed for a considerable simplification of Section \ref{S:Prelims}.
We would also like to thank Michael Hartz for pointing us to \cite[Section 3]{richter2010}.

\section{Preliminaries}\label{S:Prelims} %%%%%%%%%%%%%%%%%%%%%%%%%%%%%%%%%%%%%%%%%%%%%%%%%%%%%%%%%%%%%%%%%%%%%%%%%%%%%%%

\subsection{Multivariate spectra}\label{SS:multispectra} %%%%%%%%%%%%%%%%%%%%%%%%%%%%%%%%%%%%%%%%%%%%%%%%%%%%%%%
Let $\fH$ be a (complex) Hilbert space $\fH$. We denote by $\B(\fH)$ the $\rC^*$-algebra of bounded linear operators on $\fH$. 
If $\S\subset \B(\fH)$ is a subset, then we denote its commutant by
\[
 \S'=\{A\in \B(\fH):AX=XA \text{ for every } X\in \S\}.
\]
Let $\A\subset \B(\fH)$ be a unital norm-closed subalgebra.
Let $T_1,\ldots,T_d$ be operators in $\A'\cap\A$ and set $T=(T_1,\cdots,T_d)$.
We define $\sigma_\A(T)$ to be the subset of $\bC^d$ consisting of those points $z=(z_1,\ldots,z_d)$ such that $I\nin{\textstyle\sum_{j=1}^d}(T_j-z_jI)\mc{A}$.
When $\mc{A}$ is commutative, it is well known that
\[
	\sigma_{\mc{A}}(T)=\{(\chi(T_1),\cdots,\chi(T_d)):\chi\in\Char\mc{A}\}
\]
where $\Char(\mc{A})$ denotes the set of all unital multiplicative linear functionals on $\mc{A}$.
In this paper, we will mainly be interested in the following two special cases of this construction.
When $\A$ is the weak-$*$ closed unital subalgebra of $\B(\fH)$ generated by $T_1,\ldots,T_d$, then we write
\[
 \sigma_\A(T)=\wSpec(T)
\]
and call this the \textit{weak-$*$ spectrum} of $T$. When $\A=\{T_1,\ldots,T_d\}'$, then we write
\[
 \sigma_\A(T)=\sigma'(T)
\]
and call this the \textit{commutant spectrum} of $T$. It readily follows from these definitions that
\[
	\sigma'(T)\subset\wSpec(T).
\]
One useful property of the commutant spectrum is the following. Let $\fM\subset \fH$ be a closed subspace which is invariant for $\{T_1,\ldots,T_d\}'$. Put $T|_\fM=(T_1|_\fM, \ldots, T_d|_\fM)$. Then, it is easy to verify that
\begin{equation}\label{Eq:ComSpecRestrict}
 \sigma'(T|_{\frk{M}})\subset\sigma'(T).
 \end{equation}
Besides the commutant and weak-$*$ spectra, there is a third type of joint spectrum associated a commuting $d$-tuple $T$ that we study in this paper: the \emph{Taylor joint spectrum} \cite{Taylor1970Spec}. We denote it by $\TSpec(T)$ and refer the reader to \cite[Section 25]{muller2007} for the precise definition. It follows from \cite[Proposition 25.3 and Theorem 25.4]{muller2007} that $\TSpec(T)$ is a compact subset of $\bC^d$. Moreover, the Taylor spectrum is empty precisely when $\H$ is the zero space (\cite[Proposition 7.6]{muller2007}). Invoking \cite[Proposition 25.3]{muller2007}, we see that
\begin{equation}\label{Eq:incTayComm}
 \TSpec(T)\subset \sigma'(T).
\end{equation}
If we put $T^*=(T_1^*,\ldots,T_d^*)$, then the discussion in \cite[Section 3]{richter2010} shows that
\begin{equation}\label{Eq:TayAdj}
\TSpec(T^*)=\{(\cc{z_1},\ldots,\ol{z_d}): (z_1,\ldots,z_d)\in\TSpec(T)\}. 
\end{equation}
Next, we define the \textit{joint point spectrum} $\pSpec(T)$ to be the subset of $\bC^d$ consisting of those points $z=(z_1,\ldots,z_d)$ such that there is a non-zero vector $h\in \H$ with $T_j h=z_j h$ for every $1\leq j\leq d$. It is a consequence of \cite[Remark 25.2]{muller2007} that
\begin{equation}\label{Eq:incTayPoint}
 \pSpec(T)\subset \TSpec(T).
\end{equation}
The main advantage of the Taylor spectrum is that it has a rich functional calculus \cite{Taylor1970FC}, which we describe as follows. 

Let $x_1,\dots,x_d$ denote the canonical coordinate functions for $\CC^d$ and its subsets.  Let $U\subset\CC^d$ be an open set containing $\TSpec(T)$. We denote by $\mc{O}(U)$ the algebra of analytic functions on $U$, endowed with the topology of uniform convergence on compact subsets of $U$. 
Then, for every relatively compact  open subset $V$ such that $\TSpec(T)\subset V\subset \ol{V}\subset U$  there is a constant $C_{T,V}>0$ and a unital algebra homomorphism % <--- RR1.5
	\[
	\tau_{T,U}:\mc{O}(U)\to  \{T_1,\ldots,T_d\}''
	\]
	such that 
	\[
	\tau_{T,U}(x_j)=T_j, \quad 1\leq j\leq d
	\]
	and
	\[
	 \|\tau_{T,U}(f)\|\leq C_{T,V}\sup_{z\in V}|f(z)|
	 \]
	 for every $f\in \O(U)$ (see \cite[Theorem 5.18]{curto1988}). The following summarizes the additional properties of the Taylor functional calculus that we will need.	 
	 
	\begin{theorem}\label{T:Taylorprop}
	Let $T=(T_1,\ldots,T_d)$ be a commuting $d$-tuple of operators on some Hilbert space $\fH$. Let $U\subset \bC^d$ be an open set containing $\TSpec(T)$. Let 
	\[
	\tau_{T,U}:\O(U)\to B(\fH)
	\]
	be the Taylor functional calculus. Then, the following statements hold.
	\begin{enumerate}[{\rm (i)}]
		\item If $V\subset \bC^d$ is another open set containing $\TSpec(T)$, and if $f\in\mc{O}(U)$ and $g\in\mc{O}(V)$ are functions such that $f|_{U\cap V}=g|_{U\cap V}$, then
			\[ \tau_{T,U}(f)=\tau_{T,V}(g). \]
		\item  Let $R=(R_1,\dots,R_d)$ be a commuting $d$-tuple of operators on a Hilbert space $\frk{K}$ with $\TSpec(R)\subset U$.
			Let $X\in\mc{B}(\frk{H},\frk{K})$ be such that $XT_j=R_jX$ for $j=1,\dots,d$.
			Then
			\[ X\tau_{T,U}(f)=\tau_{R,U}(f)X \]
			for each $f\in \mc{O}(U)$.
		\item   Let $K_1$ and $K_2$ be disjoint non-empty compact subsets of $\CC^d$ such that $\TSpec(T)=K_1\cup K_2$, and suppose $U_1$ and $U_2$ are open disjoint neighbourhoods of $K_1$ and $K_2$, respectively. % <--- RR2.4
			Let $\chi$ denote the characteristic function of the set $U_1$, and set
			$P=\tau_{T,U_1\cup U_2} (\chi). $
			Then, $P$ is a non-zero idempotent operator commuting with $T$ which satisfies
			\[ \TSpec(T|_{\ran P})=K_1 \qand \TSpec(T|_{\ran(I-P)})=K_2. \]
    \item Let $\fE$ be a Hilbert space. Then \[ \TSpec(T\otimes I_{\frk{E}})=\TSpec(T). \]
    
    \item For each $f\in \O(U)$, we have that
\[
 \tau_{T\otimes I_\fE,U}(f)=\tau_{T,U}(f)\otimes I_\fE.
\]
	\end{enumerate}
\end{theorem}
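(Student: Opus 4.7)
My overall plan is to reduce each statement to the characterizing properties of the Taylor functional calculus: it is a continuous unital algebra homomorphism $\mc{O}(U) \to \{T_1,\ldots,T_d\}''$ extending the polynomial calculus, and its construction is compatible with localization to smaller open neighbourhoods of $\TSpec(T)$. For part (i), I would invoke the localization property: for any open $W$ with $\TSpec(T) \subset W \subset U$ one has $\tau_{T,U}(f) = \tau_{T,W}(f|_W)$, since the Cauchy--Weil integral representation of the calculus depends only on a neighbourhood of $\TSpec(T)$. Applying this with $W = U \cap V$ (open and still containing $\TSpec(T)$) to both $f$ and $g$, and using $f|_W = g|_W$, then delivers the claim.

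For part (ii), the idea is to lift to the block-diagonal $d$-tuple $S_j := T_j \oplus R_j$ acting on $\fH \oplus \frk{K}$. Its Taylor spectrum decomposes as $\TSpec(S) = \TSpec(T) \cup \TSpec(R) \subset U$, and by the uniqueness of the Taylor calculus---both sides being continuous unital algebra homomorphisms from $\mc{O}(U)$ that agree on polynomials---one has the block-diagonal identity $\tau_{S,U}(f) = \tau_{T,U}(f) \oplus \tau_{R,U}(f)$. The hypothesis $XT_j = R_jX$ is equivalent to the operator $Y \in \B(\fH \oplus \frk{K})$ defined by $Y(h,k) = (0, Xh)$ commuting with each $S_j$; since $\tau_{S,U}(f) \in \{S_1,\ldots,S_d\}''$, $Y$ commutes with it, and the $(2,1)$-component of that commutation identity reads $X\tau_{T,U}(f) = \tau_{R,U}(f)X$. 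I expect this step to be the most delicate, because the block-diagonal identity for $\tau_{S,U}(f)$ must be extracted cleanly from uniqueness; a fallback that avoids invoking uniqueness is to use the Cauchy--Weil integral formula directly, in which the integrand intertwines under $X$ as an immediate consequence of $XT_j = R_jX$.

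For part (iii), the disjointness of $U_1$ and $U_2$ makes $\chi$ locally constant on $U_1 \cup U_2$, hence analytic, with $\chi^2 = \chi$ and $\chi + (1-\chi) = 1$; multiplicativity of the calculus then yields $P^2 = P$, while $P$ commutes with $T$ because the calculus lands in the bicommutant of $T$. This $P$ is precisely the Riesz-type idempotent associated to the disjoint spectral decomposition $\TSpec(T) = K_1 \cup K_2$ provided by the standard spectral decomposition theorem for the Taylor spectrum (see, e.g., \cite{muller2007}), and that theorem also supplies the identifications $\TSpec(T|_{\ran P}) = K_1$ and $\TSpec(T|_{\ran(I-P)}) = K_2$. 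The fact that $P \neq 0$ then follows at once from $K_1 \neq \emptyset$, since the Taylor spectrum of an operator on the zero space is empty.

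Parts (iv) and (v) can be addressed jointly through the Koszul complex. The Koszul complex of $T \otimes I_{\frk{E}}$ at a point $z \in \CC^d$ is canonically the tensor product over $\CC$ of the Koszul complex of $T$ at $z$ with $\frk{E}$, and since tensoring with a Hilbert space preserves exactness, the former is exact at $z$ exactly when the latter is, yielding (iv). For (v), both $f \mapsto \tau_{T \otimes I_{\frk{E}}, U}(f)$ and $f \mapsto \tau_{T,U}(f) \otimes I_{\frk{E}}$ are continuous unital algebra homomorphisms $\mc{O}(U) \to \B(\fH \otimes \frk{E})$ extending the polynomial calculus, since $p(T \otimes I_{\frk{E}}) = p(T) \otimes I_{\frk{E}}$ for polynomials $p$; uniqueness of the Taylor calculus then forces them to agree on all of $\mc{O}(U)$.
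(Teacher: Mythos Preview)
Your approach to (i)--(iii) is sound and matches the standard arguments; the paper simply cites these from Vasilescu's book rather than sketching them. For (iv) you take a different route than the paper: you argue directly via the Koszul complex that tensoring with $\fE$ preserves exactness (which is correct, since at a regular point the boundary maps have closed range and hence $\ran(d \otimes I_\fE) = (\ran d) \otimes \fE$), whereas the paper observes that $T \otimes I_\fE$ is unitarily equivalent to a direct sum of copies of $T$ and appeals to the definition. Your argument is a bit more conceptual and handles the case $\dim\fE = \infty$ without any additional wording.

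There is, however, a genuine gap in your argument for (v), and the same issue lurks in your primary argument for (ii). You invoke ``uniqueness of the Taylor calculus'' among continuous unital algebra homomorphisms $\O(U) \to \B(\fH \otimes \fE)$ that agree with the polynomial calculus. That criterion is not sufficient: polynomials are in general \emph{not} dense in $\O(U)$ for the compact-open topology (take $U$ an annulus, where $1/z$ is not a uniform limit of polynomials on compacta), so two continuous homomorphisms agreeing on polynomials need not coincide on $\O(U)$. The applicable uniqueness theorem is the one the paper cites as \cite[Theorem~5.20]{curto1988}: the Taylor calculus is the unique analytic functional calculus satisfying the \emph{spectral mapping property}, namely
\[
\TSpec\bigl(\Phi(g_1),\ldots,\Phi(g_m)\bigr) = (g_1,\ldots,g_m)\bigl(\TSpec(T \otimes I_\fE)\bigr)
\]
for all $g_1,\ldots,g_m \in \O(U)$. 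The paper's proof of (v) proceeds exactly by verifying this identity for $\Phi(f) = \tau_{T,U}(f) \otimes I_\fE$, combining (iv) with the spectral mapping theorem for $\tau_{T,U}$. Your Cauchy--Weil fallback for (ii) sidesteps the issue there, but for (v) you should either verify the spectral mapping property explicitly (as the paper does) or likewise fall back on the integral representation.
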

\begin{proof}
	Statements (i),(ii) and (iii) can be found in Theorem III.13.5 along with Corollaries III.9.10 and III.9.11 of \cite{VasilescuBook}. Furthermore, if we let $n=\dim \fE$, then because $T\otimes I_\fE$ is unitarily equivalent to the $n$-fold direct sum of $T$, (iv) readily follows from the definition of the Taylor spectrum.

	We turn to (v). We consider the unital continuous homomorphism $\Phi:\O(U)\to B(\fH\otimes \fE)$ defined as
	\[
	\Phi(f)= \tau_{T,U}(f)\otimes I_\fE, \quad f\in \O(U).
	\]
 	 Note that by (iv) the set $U$ is an open neighbourhood of $\TSpec(T\otimes I_\fE)$. 
Next, let $g_1,\ldots,g_m\in\mc{O}(U)$ and observe that
	\begin{align*}
	 (g_1,\ldots,g_m)(\TSpec(T\otimes I_{\fE}))&=(g_1,\ldots,g_m)(\TSpec(T))\\
	 &=\TSpec(\tau_{T,U}(g_1),\ldots,\tau_{T,U}(g_m))
	\end{align*}
	by the spectral mapping property of the Taylor spectrum \cite[Corollary 30.11]{muller2007}.
	Applying (iv) again, we find
	\[
	 \TSpec(\tau_{T,U}(g_1),\ldots,\tau_{T,U}(g_m))=\TSpec(\tau_{T,U}(g_1)\otimes I_\fE,\ldots,\tau_{T,U}(g_m)\otimes I_\fE)
	\]
    whence 
    \[
      (g_1,\ldots,g_m)(\TSpec(T\otimes I_{\fE}))=\TSpec(\tau_{T,U}(g_1)\otimes I_\fE,\ldots,\tau_{T,U}(g_m)\otimes I_\fE).
    \]
	By the uniqueness of the Taylor functional calculus \cite[Theorem 5.20]{curto1988}, we conclude that $\tau_{T\otimes I_\fE,U}=\Phi$.
\end{proof}

\subsection{Regular unitarily invariant kernels and spaces}\label{SS:RKHS} %%%%%%%%%%%%%%%%%%%%%%%%%%%%%%%%%%%%%%%%%%%%%%%%%%
Let $d\geq 1$ be a fixed positive integer and let $\bB_d\subset \bC^d$ denote the open unit ball. Hereafter, $\mc{H}$ is a reproducing kernel Hilbert space of functions on $\BB_d$.
Thus, for every $z\in\BB_d$, there exists a $k_z\in\mc{H}$ such that $\ip{h,k_z}=h(z)$ for every $h\in\mc{H}$. The functions $\{k_z:z\in \bB_d\}$ span a dense subset of $\H$.
The \textit{kernel} of $\mc{H}$ is the function $K:\bB_d\times \bB_d\to \bC$ defined as $K(w,z)=\ip{k_z,k_w}$. Since the space $\H$ and the kernel $K$ uniquely determine one another, we will often speak of properties of the space or of the kernel interchangeably.

Throughout, we assume the following.
\begin{enumerate}
	\item[(i)] $K$ is \textit{normalized at $0$}; that is, $K(z,0)=1$ for every $z\in\BB_d$.
	\item[(ii)] $K$ is \textit{irreducible}; that is, $K(z,w)\neq 0$ for every $z,w\in\BB_d$, and $k_z$ and $k_w$ are linearly independent when $z\neq w$.
	\item[(iii)] $K$ has the \textit{complete Nevanlinna--Pick property}; that is, the function
		\[ (z,w)\mapsto 1-1/K(z,w) \]
		is positive semi-definite.
	\item[(iv)] $K$ is analytic in the first variable.
\end{enumerate}
Note that (i) implies that $1=k_0\in\mc{H}$, and it follows from (ii) that $\|k_z\|\neq 0$ for every $z\in\BB_d$.
As explained in \cite[Section 7.1]{agler2002}, property (iii) is equivalent to the usual matrix interpolation condition.
By (iv), we see that $k_z\in\mc{O}(\BB_d)$ for each $z$ whence $\mc{H}\subset \mc{O}(\BB_d)$.

The \textit{multiplier algebra of $\mc{H}$}, denoted by $\mc{M}(\mc{H})$, consists of all those functions $f$ on $\BB_d$ such that $fh\in\mc{H}$ whenever $h\in\mc{H}$.
Given $f\in\mc{M}(\mc{H})$, we set $M_fh=fh$ for each $h\in\mc{H}$.
An application of the closed graph theorem demonstrates that $M_f\in\mc{B}(\mc{H})$ for every $f\in\mc{M}(\mc{H})$. We may thus define a norm on $\M(\H)$ as
\[
 \|f\|_{\M(\H)}=\|M_f\|_{\B(\H)}, \quad f\in \M(\H).
\]
It is readily seen that $M_f^*k_z=\cc{f(z)}k_z$ for every $f\in \M(\H)$ and every $z\in \bB_d$, whence
\[ \sup_{z\in\BB_d}|f(z)|\leq \|f\|_{\M(\H)}. \]
In particular, we see that evaluation at a point in $\bB_d$ gives rise to a character on $\M(\H)$. We say that $\H$ satisfies the \emph{Corona property} if the characters of evaluation at some point in $\bB_d$ are weak-$*$ dense in $\Char(\MH)$.

Via the identification $f\mapsto M_f$ we may view $\M(\H)$ as a unital subalgebra of $\B(\H)$. As explained in \cite[Remark 5.2]{hartz2017isom}, $\M(\H)$ is always dense in $\H$ in our context, so an elementary calculation based on \cite[Theorem 2.35]{agler2002} shows that 
\begin{equation}\label{Eq:commMH}
 \M(\H)=\M(\H)'=\M(\H)''.
\end{equation}
In particular, $\M(\H)$ is a weak-$*$ closed subalgebra of $\B(\H)$.

We will also require vector-valued versions of multipliers. If $\fE$ is a Hilbert space, then using standard identifications we may view $\H\otimes \fE$ has a Hilbert space of $\fE$-valued functions on $\bB_d$. Given another Hilbert space $\fF$, we denote by $\mc{M}(\mc{H}\otimes\frk{E},\mc{H}\otimes\frk{F})$ the linear space consisting of all those functions $\Phi:\BB_d\to\mc{B}(\frk{E},\frk{F})$ with the property that $\Phi h\in\mc{H}\otimes\frk{F}$ for every $h\in\mc{H}\otimes\frk{E}$. If $\Theta\in \mc{M}(\mc{H}\otimes\frk{E},\mc{H}\otimes\frk{F})$, then we may define a bounded linear map $M_\Theta:\mc{H}\otimes\frk{E}\to \mc{H}\otimes\frk{F}$ by putting
\[
 (M_\Theta h)(z)=\Theta(z)h(z)
\]
for $z\in\BB_d$ and $h\in \mc{H}\otimes\frk{E}$.
We say that $\Theta$ is an \textit{inner} multiplier if $M_{\Theta}$ is a partial isometry.

 Our next order of business is to exhibit a certain duality between weak-$*$ closed ideals of multipliers and invariant subspaces. Such a duality is well known in the context of the Drury--Arveson space \cite[Theorem 2.4]{davidsonramseyshalit2015}, but we lack a reference that is applicable in the generality that we require. Thus, we provide an argument. 
 
Let $\N\subset \H$ be a closed $\M(\H)$-invariant subspace. We associate to it a weak-$*$ closed ideal of $\M(\H)$ defined as
\[
 \iota(\N)=\{f\in\MH: \ran M_f \subset \N\}.
\]
Furthermore, given a weak-$*$ closed ideal $\frk{a}$ of $\MH$ and a subset $X\subset \H$, we use the notation
\[
[\fa X]=\ol{\spn\{f x:f\in \fa,x\in X\}}.
\]
We also set $\rho(\fa)=[\fa \H]$, which is seen to be a closed $\M(\H)$-invariant subspace. We now show that the maps $\rho$ and $\iota$ are  mutual inverses by adapting the proof of \cite[Theorem 2.1]{davidson1998alg}.

\begin{theorem}\label{T:LatIsomThm}
	Let $\frk{a}$ be a weak-$*$ closed ideal of $\MH$ and $\N$ a closed $\MH$-invariant subspace of $\mc{H}$.
	Then
	\[ \rho(\iota(\N))=\N, \quad \iota(\rho(\frk{a}))=\frk{a}. \]
\end{theorem}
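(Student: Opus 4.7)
The plan is to prove each equality by its two inclusions, dispensing with the easy ones immediately. The inclusion $\rho(\iota(\N))\subset\N$ is immediate since $\iota(\N)\cdot\H\subset\N$ by construction and $\N$ is closed. Similarly, $\fa\subset\iota(\rho(\fa))$ is immediate because $f\cdot\H\subset\fa\cdot\H\subset[\fa\cdot\H]=\rho(\fa)$ for every $f\in\fa$.

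For the nontrivial inclusion $\N\subset\rho(\iota(\N))$, the key tool is the Beurling-type theorem of McCullough--Trent for complete Nevanlinna--Pick kernels, which produces a Hilbert space $\fE$ and an inner multiplier $\Theta\in\M(\H\otimes\fE,\H)$ with $\N=M_\Theta(\H\otimes\fE)$. For each $e\in\fE$, the scalar multiplier $\theta_e(z):=\Theta(z)e$ satisfies $M_{\theta_e}h=M_\Theta(h\otimes e)\in\N$ for every $h\in\H$, so $\theta_e\in\iota(\N)$. Since the closed linear span of $\{\theta_e\cdot h:h\in\H,\,e\in\fE\}$ equals $M_\Theta(\H\otimes\fE)=\N$, this inclusion follows.

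For the substantive inclusion $\iota(\rho(\fa))\subset\fa$, I would apply the Beurling representation to $\N:=\rho(\fa)$, choosing $\Theta$ so that $M_\Theta$ has trivial kernel, and exploit $M_\Theta M_\Theta^*=P_\N$. Given $g\in\iota(\N)$, $M_g\H\subset\N$ yields $M_g=M_\Theta(M_\Theta^*M_g)$; a direct check using $M_\Theta M_\Theta^*M_\phi=M_\phi P_\N=M_\Theta\tilde M_\phi M_\Theta^*$ (with $\tilde M_\phi$ the multiplication action on $\H\otimes\fE$) combined with injectivity of $M_\Theta$ shows $M_\Theta^*M_g$ intertwines the $\M(\H)$-actions, hence equals $M_\Psi$ for some $\Psi\in\M(\H,\H\otimes\fE)$, yielding $g=\Theta\Psi$. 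Decomposing $\Psi=\sum_j e_j\otimes\Psi_j$ along an orthonormal basis $\{e_j\}$ of $\fE$ with $\Psi_j\in\M(\H)$, the partial sums $S_N:=\sum_{j\leq N}M_{\theta_{e_j}}M_{\Psi_j}$ converge to $M_g$ in the strong operator topology and are uniformly bounded, since $\|S_N\|\leq\|M_\Theta\|\cdot\|M_{(I\otimes P_N)\Psi}\|\leq\|M_\Psi\|$. If each column $\theta_{e_j}$ lies in $\fa$, then each $S_N\in\fa$ by the ideal property, and bounded SOT convergence combined with weak-$*$ closedness of $\fa$ places $g\in\fa$.

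The main technical obstacle is therefore to show $\theta_e\in\fa$ for each $e\in\fE$. Using the $\H$-density of $\M(\H)$ together with the ideal property, one first verifies $\rho(\fa)=[\fa\cdot\H]$ coincides with the $\H$-norm closure of $\fa$ viewed as a subset of $\H$; hence each $\theta_e\in\N$ is an $\H$-norm limit of elements of $\fa$. The delicate step is promoting $\H$-norm convergence to weak-$*$ convergence in $\M(\H)$, since the latter need not follow from the former. My approach is to exploit the contractivity estimate $\|\theta_e\|_{\M(\H)}\leq\|e\|$, which follows from $M_\Theta$ being a partial isometry, and then work inside the weak-$*$ compact unit ball of $\fa$: a weak-$*$ cluster point of a suitable bounded approximating net lies in $\fa$ and must agree with $\theta_e$ at every $z\in\BB_d$, because $M_f^*k_z=\overline{f(z)}k_z$ makes point evaluation weak-$*$ continuous on $\M(\H)$; this cluster point therefore coincides with $\theta_e$ as a multiplier, placing $\theta_e\in\fa$ and completing the proof.
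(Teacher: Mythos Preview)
Your treatment of the easy inclusions and of $\N\subset\rho(\iota(\N))$ is correct and essentially matches the paper.

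The substantive gap is in the step claiming $\theta_e\in\fa$. You correctly observe that $\theta_e\in\rho(\fa)=\overline{\fa}^{\,\H}$ and that $\|\theta_e\|_{\MH}\leq\|e\|$, but the approximants $f_n\in\fa$ with $\|f_n-\theta_e\|_{\H}\to 0$ have no reason to be bounded in multiplier norm, and you never construct the ``suitable bounded approximating net'' in $\fa$ that your weak-$*$ compactness argument requires. The bound on $\theta_e$ itself does not help: what is needed is a multiplier-norm bound on the approximants \emph{in $\fa$}, and nothing in the setup supplies one. Worse, the implication ``$f\in\MH$ and $f\in\rho(\fa)$ as an element of $\H$ imply $f\in\fa$'' is exactly the inclusion $\iota(\rho(\fa))\subset\fa$ you are trying to prove, and your argument for $f=\theta_e$ invokes no property of $\theta_e$ beyond these two together with a norm bound; so the reduction to the columns of $\Theta$ buys nothing.

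The paper's proof sidesteps this entirely by duality: it first establishes $[\iota(\rho(\fa))h]=[\fa h]$ for every $h\in\H$ (via a Beurling factorization of the cyclic subspace $[\MH h]$, together with $\rho\circ\iota\circ\rho=\rho$), and then uses Hahn--Banach together with the complete Nevanlinna--Pick fact that every weak-$*$ continuous functional on $\MH$ has the form $\phi\mapsto\langle M_\phi h_1,h_2\rangle$ for some $h_1,h_2\in\H$. A separating functional for a hypothetical $\psi\in\iota(\rho(\fa))\setminus\fa$ then forces $h_2\perp[\fa h_1]=[\iota(\rho(\fa))h_1]$, contradicting $\langle M_\psi h_1,h_2\rangle\neq 0$. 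No bounded approximation in $\fa$ is ever needed.

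Two minor remarks. The identity $M_\Theta M_\Theta^* M_\phi = M_\phi P_{\N}$ you wrote is false in general (it would make $\N$ reducing); nevertheless the intertwining $\tilde M_\phi(M_\Theta^* M_g)=(M_\Theta^* M_g)M_\phi$ does follow correctly from $M_\phi M_\Theta=M_\Theta\tilde M_\phi$ and $M_g = M_\Theta M_\Theta^* M_g$, provided $M_\Theta^* M_\Theta = I$. That last hypothesis --- that $\Theta$ can be chosen with $M_\Theta$ isometric rather than merely partially isometric --- is not part of the McCullough--Trent theorem and also requires justification.
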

	\begin{proof}
	
	It is readily verified that
\[ \frk{a}\subset \iota(\rho(\frk{a})), \quad \rho(\iota(\N))\subset\N, \]
and that the maps $\iota$ and $\rho$ are order preserving.
		
We first show that $\N\subset \rho(\iota(\N))$. To see this, apply \cite[Theorem 0.7]{mccullough2000} to find a separable Hilbert space $\fE$ and an inner multiplier $\Theta\in\mc{M}(\mc{H}\otimes\frk{E},\mc{H})$ such that $\N=\ran M_\Theta$. We let $F\in \H\otimes \fE$ and we must show that $\Theta F\in \rho(\iota(\N))$. For this purpose, let $\{e_n\}_{n\in J}$ be an orthonormal basis for $\frk{E}$ and for each $n\in J$ define $\grG_n:\bB_d\to B(\bC,\fE)$ to be the constant function $e_n$. Clearly, we have that $\Gamma_n\in \M(\H,\H\otimes \fE)$ for every $n$.
Given $F\in\mc{H}\otimes\frk{E}$, there are functions $f_n\in \H$ such that $F=\sum_{n\in J} f_n \otimes e_n$. Hence, we see that
		$
		F=\sum_{n\in J} \Gamma_n f_n
		$
		so that $\Theta F=\sum_{n\in J} \Theta \Gamma_n f_n$. 
		By definition, we have $\Theta\Gamma_n\in \iota(\N)$, whence $\Theta F\in \rho(\iota(\N))$ as desired.

		We now turn to establishing $\iota(\rho(\frk{a}))\subset \fa$. For this purpose, we first claim that 
		\[
		 [\iota(\rho(\frk{a}))h] =  [\frk{a}h]
		\]
		for every $h\in \H$. Indeed, fix $h\in \H$. Using that $\fa$ is an ideal we see that 
		\[[\frk{a}h]=[\frk{a}\MH h]=[\frk{a}[\MH h]]. 
		\]
Again, we apply \cite[Theorem 0.7]{mccullough2000} to find a separable Hilbert space $\fF_h$ and an inner multiplier $\Phi_h\in\mc{M}(\mc{H}\otimes\frk{F}_h,\mc{H})$ such that $[\MH h]=\ran M_{\Phi_h}$.
		  Note now that if $\psi\in \M(\H)$, then we have
        \[
         M_\psi M_{\Phi_h}=M_{\Phi_h} (M_\psi\otimes I_{\fF_h}).
        \]
        This implies that
        \[ [\fa h]=[\frk{a}[\MH h]]=[\frk{a}\ran M_{\Phi_h}]=M_{\Phi_h}[\rho(\frk{a})\otimes\frk{F}_h] \]
		and likewise
		\[ [\iota(\rho(\frk{a}))h]=[\iota(\rho(\frk{a}))[\MH h]] = M_{\Phi_h}[\rho(\iota(\rho(\frk{a})))\otimes\frk{F}_h]. \]
		By the previous paragraph, we have that $\rho\of\iota\of\rho=\rho$ whence the previous two equations combine to yield
		\[ [\iota(\rho(\frk{a}))h] = M_{\Phi_h}[\rho(\frk{a})\otimes\frk{F}_h] = [\frk{a}h] \]
        and thus the claim is established. 
        
        Going back to proving the inclusion $\iota(\rho(\frk{a}))\subset \fa$, we suppose there exists a function $\psi\in\iota(\rho(\frk{a}))$ which does not lie in $\frk{a}$.
		A standard consequence of the Hahn--Banach theorem gives the existence of a weak-$*$ continuous linear functional $\lambda:\MH\to\CC$ such that $\lambda(\frk{a})=\{0\}$ and $\lambda(\psi)\neq 0$.
		Using that $K$ has the complete Nevanlinna--Pick property, we may now invoke \cite[Corollary 5.3]{davidsonhamilton2011} to see that there exist $h_1,h_2\in\mc{H}$ such that $\lambda(\theta)=\ip{M_\theta h_1,h_2}$ for each $\theta\in\MH$.
		Because $\lambda$ annihilates $\frk{a}$, we infer that have 
		$h_2\in\mc{H}\ominus [\frk{a}h_1]$. In turn, by the previous paragraph, we conclude that $h_2\in\mc{H}\ominus [\iota(\rho(\frk{a}))h_1]$ which forces $\lambda(\psi)=0$ and gives a contradiction.
	\end{proof}

If $\fa\subset \M(\H)$ is a weak-$*$ closed ideal, we put $\H(\fa)=\H\ominus [\fa \H]$. Furthermore, we let $\B_\fa\subset \B(\H(\fa))$ denote the algebra consisting of operators of the form
\[
 P_{\mc{H}(\frk{a})}M_{f}|_{\mc{H}(\frk{a})} , \quad f\in \M(\H).
\]
Using Theorem \ref{T:LatIsomThm} and a standard argument, we can identify $\B_\fa$ as a quotient of $\M(\H)$.   
	
	\begin{theorem}\label{T:QCIsom} % Quotient-Compression Isomorphism Theorem
	The algebra $\B_\fa$ is weak-$*$ closed. Moreover, the map $\grG:\MH/\frk{a}\to\mc{B}_{\frk{a}}$ given by 
		\[ \grG(f+\frk{a})=P_{\mc{H}(\frk{a})}M_{f}|_{\mc{H}(\frk{a})},\quad f\in \M(\H) \]
		is a unital completely isometric isomorphism and weak-$*$ homeomorphism.
	\end{theorem}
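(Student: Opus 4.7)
The plan is to factor everything through the compression map $\pi \colon \M(\H) \to B(\H(\fa))$ defined by $\pi(f) = P_{\H(\fa)} M_f|_{\H(\fa)}$, whose range is $\B_\fa$ by definition. Because $[\fa\H]$ is $\M(\H)$-invariant, the subspace $\H(\fa) = \H \ominus [\fa\H]$ is co-invariant for $\M(\H)$, so the standard ``semi-invariance'' calculation shows that $\pi$ is a unital, completely contractive algebra homomorphism. It is also weak-$*$ continuous: the identification $f \mapsto M_f$ is a weak-$*$ homeomorphism onto its image in $B(\H)$ by \eqref{Eq:commMH}, and the compression $A \mapsto P_{\H(\fa)} A|_{\H(\fa)}$ is weak-$*$ continuous on $B(\H)$.

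Next, I would identify $\ker \pi$ with $\fa$. If $\pi(f) = 0$, then $M_f \H(\fa) \subset [\fa\H]$; since $[\fa\H]$ is also $M_f$-invariant and $\H = \H(\fa) \oplus [\fa\H]$, it follows that $M_f \H \subset [\fa\H]$, that is, $f \in \iota(\rho(\fa))$, which equals $\fa$ by Theorem \ref{T:LatIsomThm}. The reverse inclusion is immediate. Consequently, $\pi$ descends to an injective, unital, completely contractive homomorphism $\Gamma \colon \M(\H)/\fa \to \B_\fa$ that is surjective by the very definition of $\B_\fa$.

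The main obstacle is to upgrade $\Gamma$ from completely contractive to completely isometric. For this I would invoke the vector-valued commutant lifting theorem available in the complete Nevanlinna--Pick setting (of Ball--Trent--Vinnikov type), following the template of \cite[Section 2]{davidsonramseyshalit2015} for the Drury--Arveson case. Concretely, given $F \in M_n(\M(\H))$ and $\varepsilon > 0$, such a lifting produces some $G \in M_n(\M(\H))$ with $\pi^{(n)}(G) = \pi^{(n)}(F)$ and $\|G\|_{M_n(\M(\H))} \leq \|\pi^{(n)}(F)\|_{M_n(\B_\fa)} + \varepsilon$. Since $G - F \in M_n(\fa)$, this realizes the infimum defining the quotient norm and yields $\|F + M_n(\fa)\| = \|\Gamma^{(n)}(F + M_n(\fa))\|$. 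This step crucially uses the complete Nevanlinna--Pick property of $K$, together with the Beurling-type realization of $\M(\H)$-invariant subspaces already leveraged in the proof of Theorem \ref{T:LatIsomThm}.

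To conclude, I would deduce weak-$*$ closedness and bicontinuity. The quotient $\M(\H)/\fa$ is a dual Banach space with predual $\M(\H)_*/\fa_\perp$, so its closed unit ball is weak-$*$ compact by Banach--Alaoglu. Since $\Gamma$ is weak-$*$ continuous and a complete isometry, the image of this unit ball equals the closed unit ball of $\B_\fa$ and is weak-$*$ compact in $B(\H(\fa))$. The Krein--Smulian theorem then implies that $\B_\fa$ is weak-$*$ closed, while the restriction of $\Gamma$ to the unit ball is a continuous bijection between weak-$*$ compact Hausdorff spaces and so a homeomorphism. A final application of Krein--Smulian upgrades $\Gamma^{-1}$ to a weak-$*$ continuous map on all of $\B_\fa$, so that $\Gamma$ is a weak-$*$ homeomorphism.
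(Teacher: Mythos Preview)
Your argument is correct and follows essentially the same route as the paper: compress to $\H(\fa)$, identify the kernel via Theorem~\ref{T:LatIsomThm}, and use commutant lifting to upgrade to a complete isometry. Two minor remarks. First, the paper obtains weak-$*$ closedness of $\B_\fa$ at the outset and more directly, by observing (again via commutant lifting) that $\B_\fa=\B_\fa'$; it then cites a standard duality fact \cite[Theorem A.2.5(3)]{BlecherLeMerdy2004} for the weak-$*$ homeomorphism, in place of your Krein--Smulian argument. Second, your identification of the predual is off: the predual of $\M(\H)/\fa$ is the preannihilator $\fa_\perp\subset\M(\H)_*$, not $\M(\H)_*/\fa_\perp$ (the latter is the predual of $\fa$). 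This slip is harmless, since all you actually use is that $\M(\H)/\fa$ is a dual space.
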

	\begin{proof}
		First note that it follows from the commutant lifting theorem \cite[Theorem 5.1]{BTV2001} that $\mc{B}_{\frk{a}}=\mc{B}_{\frk{a}}'$, and thus that $\mc{B}_{\frk{a}}$ is closed in the weak-* topology.
		Define $\grG_0:\MH\to\mc{B}_{\frk{a}}$ by setting $\grG_0(f)=P_{\mc{H}(\frk{a})}M_f |_{\mc{H}(\frk{a})}$ for each $f\in\MH$.
		From its definition, we readily see that $\grG_0$ is unital, completely contractive, surjective, and weak-$*$ continuous.
		Because $\mc{H}(\frk{a})$ is $\MH$-coinvariant, $\grG_0$ is also a homomorphism.
		Note that
		\begin{eqnarray*}
			\ker\grG_0 &=& \{f\in\MH: P_{\mc{H}(\frk{a})}M_f=0 \} \\
			&=& \{f\in\MH: \ran M_f\subset \rho(\frk{a}) \} \\
			&=& \iota(\rho(\frk{a}))
		\end{eqnarray*}
		whence $\ker\grG_0=\frk{a}$ by  Theorem \ref{T:LatIsomThm}.
		Thus $\grG_0$ induces a unital completely contractive isomorphism $\grG:\MH/\frk{a}\to\mc{B}_{\frk{a}}$ that is weak-$*$ continuous. We now claim that $\Gamma$ is completely isometric. To see this, let $n$ be a positive integer and let $[f_{ij}]$ be an $n\times n$ matrix with entries in $\M(\H)$. 
		Another application of the commutant lifting theorem yields an $n\times n$ matrix of multipliers $[g_{ij}]$ with $\|[\Gamma(f_{ij})]\|=\|[M_{g_{ij}}]\|$ such that $[\Gamma(f_{ij})]=[\Gamma(g_{ij})]$. In particular, this means that $f_{ij}-g_{ij}\in \fa$ for every $i,j$ and
		\[
		 \|[\Gamma(f_{ij})]\|=\|[M_{g_{ij}}]\|\geq \|[{f_{ij}}+\fa] \|
		\]
which shows that $\Gamma$ is completely isometric. Finally, by \cite[Theorem A.2.5(3)]{BlecherLeMerdy2004} we conclude $\grG$ is a weak-$*$ homeomorphism.
	\end{proof}

    Throughout the paper, we will be interested in kernels that exhibit additional structure. 	We say that the kernel $K$ is \textit{unitarily invariant} if 
	\[K(Uz,Uw)=K(z,w)\] for every $z,w\in\BB_d$ and unitary operator $U$ on $\CC^d$. As shown in \cite[Lemma 2.2]{hartz2017isom}, this is equivalent to the existence of a unique sequence $(a_n)_n$ of non-negative numbers such that
	\[ K(z,w)=1+\sum_{n=1}^\infty a_n\ip{z,w}^n \]
	for every $z,w\in\BB_d$. We say that the unitarily invariant kernel $K$ is \textit{regular} if the associated sequence $(a_n)_n$ is strictly positive and satisfies
\[ \lim_{n\to\infty}\frac{a_n}{a_{n+1}}=1. \]
 By \cite[Lemma 2.3]{hartz2017isom} and \cite[Lemma 2.1]{CH2016}, there exists a non-negative sequence $(b_n)$ such that
\begin{equation}\label{Eq:bSeries}
	1-\frac{1}{K(z,w)}=\sum_{n=1}^\infty b_n\ip{z,w}^n
\end{equation}
for $z,w\in \BB_d$. It is readily seen that $b_1=a_1$ and $\sum_{n=1}^\infty b_n\leq 1$. We say that the space $\H$ is \emph{maximal} when $\sum_{n=1}^\infty b_n=1$. This is equivalent to the fact that the series $\sum_{n=1}^\infty a_n$ diverges.

Such spaces are plentiful; we exhibit next an entire scale of examples.

\begin{example}\label{E:Hs}
 Let $s$ be a non-positive real number and let $\H_s$ be the reproducing kernel Hilbert space on $\bB_d$ with kernel
 \[
  K_s(z,w)=1+\sum_{n=1}^\infty (n+1)^{-s} \langle z,w \rangle^n, \quad z,w\in \bB_d.
 \]
 The fact that $K_s$ has the complete Nevanlinna--Pick property may be shown as in \cite[Corollary 7.41]{agler2002}. We note that $\H_s$ is a regular unitarily invariant space which is maximal. In particular, by choosing $s=0$ we recover the Drury--Arveson space. When $d=1$ and $s=-1$, $\H_s$ is the Dirichlet space $\D$ on the disc. 

Next, let $0< \sigma \leq 1/2$ and consider the kernel
\[
 B_\sigma(z,w)=\frac{1}{(1-\langle z,w\rangle )^{2\sigma}}, \quad z,w\in \bB_d.
\]
The corresponding reproducing kernel Hilbert space on $\bB_d$ satisfies the Corona property by \cite[Corollary 3]{costea2011}. On the other hand, it is known \cite[Example 7.2]{hartz2017isom} that this space is isomorphic to $\H_{2\sigma -1}$. This shows that for $-1<s\leq 0$ the space $\H_s$ is regular, maximal, unitarily invariant, and it has the Corona property. 
 \qed
\end{example}

The following result is a consequence of \cite[Proposition 8.5]{hartz2017isom}
	\begin{theorem}\label{T:GleasonTrick}
		%Prop. 8.5 in Hartz Canadian J. Math. => Gleason's trick
		Let $\H$ be a regular unitarily invariant space. 
		Let $z\in\BB_d$ and $f\in\MH$. Then there exist $f_1,\ldots,f_d\in\MH$ such that
		\[ f=f(z)+\sum_{j=1}^d(x_j-z_j)f_j. \]
	\end{theorem}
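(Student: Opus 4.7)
My approach is to perform a trivial reduction to the case $f(z)=0$ and then apply the cited proposition. First, I would observe that $f(z)\in\CC\subset\MH$, so the function $g:=f-f(z)$ is itself a multiplier, and it satisfies $g(z)=0$. Thus it suffices to show that every $g\in\MH$ vanishing at $z$ admits a decomposition
\[
 g = \sum_{j=1}^d (x_j-z_j)\, f_j
\]
with $f_j\in\MH$ for $1\le j\le d$. This is precisely the \emph{Gleason problem} for $\MH$ at the point $z$, and it is exactly the content of \cite[Proposition 8.5]{hartz2017isom}, whose hypotheses coincide with the standing assumptions imposed here on $K$ (regular unitarily invariant complete Nevanlinna--Pick kernel on $\BB_d$). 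The proof is therefore completed by invoking that proposition and declaring $f_j$ to be the produced multipliers.

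\textbf{Where the work lies.} Formally, one always has the fundamental-theorem-of-calculus decomposition
\[
 g(w)=\sum_{j=1}^d (w_j-z_j)\int_0^1 (\partial_j g)(z+t(w-z))\, dt,
\]
which produces holomorphic candidates for the $f_j$. The main obstacle in a from-scratch proof would be the quantitative statement that these candidates in fact lie in $\MH$, with multiplier norms controlled by $\|g\|_{\MH}$. The regularity condition $a_n/a_{n+1}\to 1$ together with the complete Nevanlinna--Pick property is exactly what makes such an estimate available; this is the technical heart of the cited proposition. Since we are permitted to invoke it directly, the only step left on our side is the subtraction of the constant $f(z)$, and the result follows.
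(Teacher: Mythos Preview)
Your proposal is correct and matches the paper's treatment exactly: the paper does not give an independent argument but simply records the theorem as a consequence of \cite[Proposition 8.5]{hartz2017isom}, which is precisely the citation you invoke after the trivial subtraction of the constant $f(z)$.
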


	Henceforth, we take $\bN=\{0,1,2,\ldots\}$; we choose to include zero for convenience.
Given $\alpha=(\alpha_1,\ldots,\alpha_d)\in\NN^d$, we set 
\[ 
 |\alpha|=\sum_{j=1}^d \alpha_j, \quad \alpha! = \prod_{j=1}^d \alpha_j!\]
and if $r=(r_1,\ldots,r_d)$ is a $d$-tuple of commuting elements in a unital algebra, then we set
\[ r^\alpha = r_1^{\alpha_1}\cdots r_d^{\alpha_d}. \]
		When $\H$ is a regular unitarily invariant space, the set $\{x^\alpha: \alpha\in\NN^d\}$ is an orthogonal basis for $\mc{H}$ with $\|1\|=1$ and
	\[ \|x^\alpha\|^2_{\mc{H}}=\frac{1}{a_{|\alpha|}}\frac{\alpha!}{|\alpha|!} \]
	for each $\alpha\in\NN^d$ with $|\alpha|>0$ (see \cite[Section 4]{greene2002}). In particular, $\CC[x_1,\dots,x_d]$ is a dense subset of $\mc{H}$.
	Furthermore, it follows from \cite[Theorem 6.4]{hartz2017isom} that
the multiplier algebra $\M(\H)$ contains all polynomials.  We put
\[
 M_x=(M_{x_1},\ldots,M_{x_d}).
\]
If $\fa\subset \M(\H)$ is a weak-$*$ closed ideal, then we put
\[\kZa=P_{\mc{H}(\frk{a})}M_x|_{\mc{H}(\frk{a})}. \]
For $f\in \M(\H)$, we write
\[
 f(\kZa)=P_{\mc{H}(\frk{a})}M_f|_{\mc{H}(\frk{a})}.
\]
In particular, we see then that
\[
 \B_\fa=\{f(\kZa):f\in \M(\H)\}
\]
and this algebra can be identified with $\M(\H)/\fa$ by Theorem \ref{T:QCIsom}.

We denote by $\AH$ the norm closure of $\CC[x_1,\ldots,x_d]$ inside of $\M(\H)$. Because the multiplier norm always dominates the supremum norm over $\bB_d$, it is readily verified that multipliers in $\AH$ extend to  continuous functions on the closed unit ball.

\subsection{$K$-Contractions and functional models} %%%%%%%%%%%%%%%%%%%%%%%%%%%%%%%%%%%%%%%%%%%%%%%%%%%%%%%%%

Let $\H$ be a regular unitarily invariant space on $\bB_d$ with kernel $K$. Assume that
\[
 1-\frac{1}{K(z,w)}=\sum_{n=1}^\infty b_n \langle z,w \rangle^n, \quad z,w\in \bB_d.
\]
We say that a $d$-tuple $T=(T_1,\ldots,T_d)$ of commuting operators on some Hilbert space $\fH$ is a \textit{$K$-contraction} if
\[ \sum_{n=1}^\infty b_n\sum_{|\alpha|=n}\frac{n!}{\alpha!} T^\alpha T^{*\alpha} \leq I. \]
Recall that commuting $d$-tuple $U=(U_1,\ldots,U_d)$ is said to be a \emph{spherical unitary} if $U_1,\ldots,U_d$ are all normal and $\sum_{j=1}^d U_j U_j^*=I$. Note that 
\[ \sum_{n=1}^\infty b_n\sum_{|\alpha|=n}\frac{n!}{\alpha!} U^\alpha U^{*\alpha} =\sum_{n=1}^\infty b_n\left(\sum_{j=1}^d U_j U_j^* \right)^n=\sum_{n=1}^\infty b_n I \leq I \]
so that $U$ is a $K$-contraction.

We gather some useful known facts regarding commuting $K$-contractions.

\begin{theorem}
    Let $T=(T_1,\ldots,T_d)$ be a commuting $d$-tuple of operators on some Hilbert space $\fH$. Consider the following statements. 
    \begin{enumerate}[{\rm (i)}]
    \item The $d$-tuple $T$ is a commuting $K$-contraction.
    \item There exist two Hilbert spaces $\fE$ and $\frk{K}$, a spherical unitary $U$ on $\frk{K}$, and an isometry $V:\fH\to(\H\otimes \fE)\oplus \fK $ such that 
     \[
     VT^*=((M_x^*\otimes I_\fE)\oplus U^*)V.
     \]

     \item We have that $\wSpec(T)\subset\cc{\BB}_d$.
      \item There exists a unique unital completely contractive homomorphism $\alpha_T:\AH\to\mc{B}(\frk{H})$ satisfying $\alpha_T(x_j)=T_j$ for $j=1,\ldots,d$.
    \end{enumerate}
Then, we have that
\[
 {\rm (i)} \Leftrightarrow {\rm (ii)} \Rightarrow {\rm (iii)}, {\rm (iv)}.
\]
\begin{proof}
 This follows from Lemma 5.3 and its proof, along with Theorems 5.4 and 5.6 in \cite{CH2016}.
\end{proof}
	\label{T:KCntrcFacts}
\end{theorem}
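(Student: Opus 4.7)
The plan is to establish the nontrivial direction $(\mathrm{i})\Rightarrow(\mathrm{ii})$ via an explicit dilation and then derive the reverse implication along with $(\mathrm{iii})$ and $(\mathrm{iv})$ as relatively clean consequences. For $(\mathrm{ii})\Rightarrow(\mathrm{i})$, first observe that the model $R:=(M_x\otimes I_{\fE})\oplus U$ is itself a $K$-contraction: $M_x$ satisfies the defining inequality on reproducing kernels (using $M_{x_j}^{*}k_w=\overline{w}_{j}k_w$ together with $1-1/K(z,w)=\sum_{n}b_n\langle z,w\rangle^n$), the spherical unitary $U$ is handled by the computation displayed just before the theorem statement, and the inequality is preserved under orthogonal direct sums and tensoring with identity. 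Iterating $VT^{*}=R^{*}V$ gives $T^{*\alpha}=V^{*}R^{*\alpha}V$ for every multi-index $\alpha$; because $V\fH$ is $R$-co-invariant, this upgrades to $T^{\alpha}T^{*\alpha}=V^{*}R^{\alpha}R^{*\alpha}V$, so summation yields $\sum_n b_n\sum_{|\alpha|=n}\tfrac{n!}{\alpha!}T^{\alpha}T^{*\alpha}\leq V^{*}V=I$.

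For $(\mathrm{i})\Rightarrow(\mathrm{ii})$, which carries the substantive work, introduce the defect $\Delta_T:=I-\sum_{n}b_n\sum_{|\alpha|=n}\tfrac{n!}{\alpha!}T^{\alpha}T^{*\alpha}\geq 0$, set $D:=\Delta_T^{1/2}$, and let $\fE:=\overline{\ran D}$. The key identity
\[ T^{\gamma}T^{*\gamma}-\sum_{n\geq 1}b_{n}\sum_{|\beta|=n}\tfrac{n!}{\beta!}T^{\gamma+\beta}T^{*(\gamma+\beta)}=T^{\gamma}\Delta_T T^{*\gamma} \]
can be iterated to produce, in the limit, a representation of $I$ as a convergent series built from the operators $T^{\gamma}\Delta_T T^{*\gamma}$ plus a residual positive operator $Q=\lim_N Q_N\geq 0$. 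The series part determines a bounded map $V_0:\fH\to\H\otimes\fE$ intertwining $T^{*}$ with $M_x^{*}\otimes I_{\fE}$ and satisfying $V_0^{*}V_0=I-Q$. The residual $Q$ must then be absorbed by a spherical unitary $U$ on a further Hilbert space $\fK$, constructed as a minimal normal extension in the spirit of Arveson and verified, via a commutant-type argument, to have joint spectrum contained in $\partial\BB_d$. Combining $V_0$ with an isometry realizing $Q^{1/2}$ into $\fK$ produces the required $V:\fH\to(\H\otimes\fE)\oplus\fK$. This last step---showing that the boundary piece is genuinely spherical unitary---is the main obstacle.

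Once $(\mathrm{ii})$ is in hand, $(\mathrm{iv})$ follows cleanly: set $R:=(M_x\otimes I_{\fE})\oplus U$ and define $\alpha_T(p):=V^{*}p(R)V$ on polynomials. Co-invariance of $V\fH$ for $R$ gives $p(T)=V^{*}p(R)V$, and the estimates $\|p(M_x\otimes I_{\fE})\|=\|M_p\|=\|p\|_{\M(\H)}$ together with $\|p(U)\|\leq\sup_{\partial\BB_d}|p|\leq\|p\|_{\M(\H)}$ (from the continuous functional calculus for the commuting normal tuple $U$) yield $\|\alpha_T(p)\|\leq\|p\|_{\M(\H)}$; the identical argument works matricially, so $\alpha_T$ extends completely contractively to $\AH$, with uniqueness following from density of polynomials. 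For $(\mathrm{iii})$: if $z\notin\overline{\BB}_d$, then $f(w):=1/(|z|^{2}-\langle w,\bar z\rangle)$ is analytic on a neighbourhood of $\overline{\BB}_d$ and hence uniformly approximable there by polynomials, and the identity $\sum_{j}(x_j-z_j)(-\bar z_j f)\equiv 1$ on $\overline{\BB}_d$ yields, upon applying $\alpha_T$, operators $R_1,\ldots,R_d$ in the norm-closed (hence weak-$*$ closed) algebra generated by $T$ satisfying $\sum_{j}(T_j-z_j)R_j=I$; consequently $z\notin\wSpec(T)$.
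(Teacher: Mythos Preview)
The paper does not argue this from scratch; it simply cites Lemma~5.3 and Theorems~5.4 and~5.6 of \cite{CH2016}. Your sketch is essentially an outline of the content of those cited results, and the directions $(\mathrm{ii})\Rightarrow(\mathrm{i})$, $(\mathrm{ii})\Rightarrow(\mathrm{iv})$, and $(\mathrm{ii})\Rightarrow(\mathrm{iii})$ are handled correctly, modulo two cosmetic points: in $(\mathrm{iii})$ the function should be $f(w)=1/(|z|^{2}-\langle w,z\rangle)$ rather than $1/(|z|^{2}-\langle w,\bar z\rangle)$, and the phrase ``norm-closed (hence weak-$*$ closed) algebra'' should be read as ``norm-closed algebra, hence contained in the weak-$*$ closed algebra,'' which is all you need.

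The one genuine gap is the step you yourself flag in $(\mathrm{i})\Rightarrow(\mathrm{ii})$: producing a \emph{spherical} unitary from the residual operator $Q$. The description ``minimal normal extension in the spirit of Arveson \ldots\ verified via a commutant-type argument'' does not name a mechanism that would actually deliver this. What is done in \cite{CH2016} is to observe that the decreasing limit $Q=\lim_m\Phi_T^{\circ m}(I)$ satisfies $\Phi_T(Q)=Q$, then pass to the Hilbert space obtained by separating and completing $\fH$ with respect to the seminorm $h\mapsto\|Q^{1/2}h\|$; the induced tuple $\widetilde T$ on that space satisfies the defect \emph{equality} $\sum_n b_n\sum_{|\alpha|=n}\tfrac{n!}{\alpha!}\widetilde T^{\alpha}\widetilde T^{*\alpha}=I$, and regularity of $K$ (in particular $b_1=a_1>0$) is then used to force $\sum_j\widetilde T_j\widetilde T_j^{*}=I$ and to obtain a spherical unitary dilation of $\widetilde T$. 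This is precisely where the structural hypotheses on $K$ are consumed, and a generic commutant argument would not suffice. Since you explicitly acknowledge this as ``the main obstacle'' without resolving it, your proposal is a correct outline rather than a complete proof of $(\mathrm{i})\Rightarrow(\mathrm{ii})$.
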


In view of (ii) above, we usually refer to $M_x$ as being the \emph{functional model} for $K$-contractions. Given a commuting $K$-contraction $T$, we refer to the map $\alpha_T$ in (iv) above as the \textit{$\AH$-functional calculus}. When $\alpha_T$ extends to a weak-$*$ continuous unital homomorphism $\MHFC_T: \MH\to\mc{B}(\frk{H})$, we say that $T$ is an \textit{absolutely continuous $K$-contraction} and call $\widehat{\gra}_T$ the \textit{$\MH$-functional calculus for $T$}. This choice of terminology is justified by some known characterizations of absolute continuity \cite{CD2016abscont},\cite{BHM2018}, but we will not require those results for our purposes in this paper. When $T$ is an absolutely continuous $K$-contraction, we define its \textit{annihilator} to be the weak-$*$ closed ideal
\[ \Ann(T)=\{f\in \MH: \MHFC_T(f)=0\}. \]
It is easy to see that $T$ is absolutely continuous whenever the spherical unitary part is absent from the dilation in part (ii) of Theorem \ref{T:KCntrcFacts}. When a commuting $K$-contraction $T$ has this property, we will say that it is \emph{$K$-pure}. 
We now refine the dilation result found in Theorem \ref{T:KCntrcFacts}.

\begin{corollary}\label{C:GetInter}
    Let $\H$ be a regular unitarily invariant space with kernel $K$. Let $T=(T_1,\ldots,T_d)$ be a $K$-pure commuting $K$-contraction on some Hilbert space $\fH$. Put $\fa=\Ann(T)$.
    Then, there exist a Hilbert space $\fE$ and an isometry $V:\fH\to\H(\fa)\otimes \fE$ such that 
    \[
     VT^*=((\kZa)^*\otimes I_\fE)V.
     \]
	Moreover, $\ker V^*$ contains no non-zero closed subspace of the form $\frk{M}\otimes\frk{E}$ where $\fM\subset \H(\fa)$ is invariant for $\kZa$ . % where $\frk{M}$ is a $\kZa$-invariant subspace of $\mc{H}$.
\end{corollary}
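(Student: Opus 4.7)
My approach proceeds in three steps, building on Theorem \ref{T:KCntrcFacts}.

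First, the definition of $K$-purity together with Theorem \ref{T:KCntrcFacts} directly supply a Hilbert space $\fE_0$ and an isometry $V_0:\fH\to \H\otimes \fE_0$ satisfying $V_0 T^* = (M_x^*\otimes I_{\fE_0})V_0$.

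Second, I would show that $\ran V_0\subset \H(\fa)\otimes \fE_0$. Taking adjoints, $V_0^*$ intertwines $M_x\otimes I_{\fE_0}$ with $T$, and by weak-$*$ continuity this extends to $\MHFC_T(f) V_0^* = V_0^*(M_f\otimes I_{\fE_0})$ for every $f\in \MH$. For $f\in \fa=\Ann(T)$ the left-hand side vanishes, so $(M_f^*\otimes I_{\fE_0})V_0=0$. Using $\ker(M_f^*\otimes I_{\fE_0}) = \ker M_f^*\otimes \fE_0 = (\ran M_f)^\perp\otimes \fE_0$ and intersecting over all $f\in \fa$ yields $\ran V_0\subset [\fa\H]^\perp\otimes \fE_0 = \H(\fa)\otimes \fE_0$. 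Because $\fa$ is an ideal of $\MH$, the subspace $[\fa\H]$ is $M_x$-invariant, so $\H(\fa)$ is coinvariant for $M_x$; restricting the codomain of $V_0$ then yields an isometry $V_1:\fH\to \H(\fa)\otimes \fE_0$ with $V_1 T^*=((\kZa)^*\otimes I_{\fE_0})V_1$.

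Third, I would achieve the minimality condition. The family of closed $\kZa$-invariant subspaces $\fM\subset\H(\fa)$ for which $\fM\otimes \fE_0\subset \ker V_1^*$ is stable under closed linear spans, so it admits a largest element $\fM_{\max}$. If $\fM_{\max}=\{0\}$ we may take $V=V_1$ and $\fE=\fE_0$. Otherwise a reduction is required; the cleanest route is to rework the construction from the outset using a canonical defect-based dilation---for instance with $\fE_0=\ol{\ran \Delta_T}$ for the defect operator $\Delta_T=(I-\sum_n b_n\sum_{|\alpha|=n}\frac{n!}{\alpha!}T^\alpha T^{*\alpha})^{1/2}$ and with $V$ defined from $\Delta_T T^{*\alpha}$ via the standard intertwining formula---and to verify directly that this canonical choice forces $\fM_{\max}=\{0\}$.

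The main obstacle is this minimality step. Simply passing to a subspace of $\fE_0$ after the fact does not preserve the intertwining, because the redundancy $\fM_{\max}\otimes \fE_0$ fills the whole coefficient space. A clean proof therefore seems to require either (a) building minimality into the original construction via the defect space, or (b) carrying out a Zorn-type minimization within the class of isometric intertwining maps $\fH\to\H(\fa)\otimes \fE$ of the required form.
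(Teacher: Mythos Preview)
Your first two steps are correct and match the paper's argument essentially verbatim: one uses $K$-purity to get the isometric dilation into $\H\otimes\fE$, and then the intertwining relation together with $\fa=\Ann(T)$ forces the range to lie in $\H(\fa)\otimes\fE$.

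The gap is in your third step. You treat minimality as a genuine obstacle requiring either a canonical defect construction or a Zorn-type argument, but in fact minimality is \emph{automatic} for the isometry $V=V_1$ you have already constructed; no further reduction is needed. The missing idea is to exploit the lattice isomorphism of Theorem~\ref{T:LatIsomThm}. Suppose $\fM\subset\H(\fa)$ is closed and $\kZa$-invariant with $\fM\otimes\fE\subset\ker V^*$. Then $\fM\oplus[\fa\H]$ is an $\M(\H)$-invariant subspace of $\H$, so by Theorem~\ref{T:LatIsomThm} it equals $[\fb\H]$ for a weak-$*$ closed ideal $\fb\supset\fa$, and $\fM=[\fb\H]\ominus[\fa\H]$. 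For $f\in\fb$ one has $\ran(f(\kZa)\otimes I_\fE)\subset\fM\otimes\fE\subset\ker V^*$, and the intertwining relation gives
\[
f(T)V^*=V^*(f(\kZa)\otimes I_\fE)=0.
\]
Since $V$ is an isometry, $V^*$ is surjective, so $f(T)=0$; that is, $\fb\subset\Ann(T)=\fa$. Hence $\fb=\fa$ and $\fM=\{0\}$.

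The point you overlooked is that the condition $\fa=\Ann(T)$ is exactly what rules out any extraneous invariant piece: an $\fM$ as above would witness a strictly larger annihilating ideal. Your proposed remedies (rebuilding the dilation from the defect operator, or minimizing via Zorn) are unnecessary and, as you yourself note, not actually carried out.
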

\begin{proof}
    By assumption, there exist a Hilbert space $\fE$ and an isometry $W:\fH\to\H\otimes \fE$ such that 
    \[
     WT^*=(M^*_x\otimes I_\fE)W.
     \]
    Thus, we find
    \[
     f(T)W^*=W^* (M_f\otimes I_\fE), \quad f\in \M(\H)
    \]
    and consequently
    \[
     W^* (M_f\otimes I_\fE)=0, \quad f\in \fa.
    \]
    We infer that $[\fa \H]\otimes \fE\subset \ker W^*$ and so 
    $
     \ran W\subset \H(\fa)\otimes \fE.
    $
    In particular, 
    \[
     (M^*_x\otimes I_\fE)|_{\ran W}=(M^*_x|_{\H(\fa)}\otimes I_\fE)|_{\ran W}=((\kZa)^*\otimes I_\fE)|_{\ran W}.
    \]
The operator $V=P_{\H(\fa)\otimes \fE}W:\fH\to \H(\fa)\otimes \fE $ is an isometry  which satisfies 
    \[
     VT^*=((\kZa)^*\otimes I_\fE)V.
     \]

Finally, let $\frk{M}$ be a closed $Z^{(\frk{a})}$-invariant subspace of $\mc{H}(\frk{a})$ such that $\frk{M}\otimes\frk{E}\subset \ker V^*$.
	Applying Theorem \ref{T:LatIsomThm} to the $\M(\H)$-invariant subspace $\fM+[\fa \H]$ provides a weak-$*$ closed ideal $\frk{b}\subset \mc{M}(\mc{H})$ such that $\fa\subset \frk{b}$ and $\frk{M}=[\frk{b}\mc{H}]\ominus[\frk{a}\mc{H}]$. In particular, we see that 
	\[
	 \ran (f(\kZa)\otimes I_\fE)\subset \ker V^*
	\]
    and thus
    \[
     f(T)V^*=V^*(f(\kZa)\otimes I_\fE)=0
    \]
    for every $f\in \frk{b}$. Since $V^*$ is surjective, we infer that $\frk{b}\subset \Ann(T)=\fa$, whence $\fa=\frk{b}$ and $\fM=\{0\}$.
\end{proof}

Because of the previous result, we will typically refer to $\kZa$ as the \emph{functional model} for $K$-pure commuting $K$-contractions whose annihilator is $\fa$.

Let $T=(T_1,\ldots,T_d)$ be a commuting $K$-contraction on some Hilbert space $\fH$. We define a linear map 
\[
 \Phi_T:\B(\fH)\to \B(\fH)
\]
by
\[
 \Phi_T(A)=\sum_{n=1}^\infty b_n\sum_{|\alpha|=n}\frac{n!}{\alpha!}T^\alpha A T^{*\alpha}, \quad A\in \B(\fH).
\]
It is readily seen that $\Phi_T$ is positive and $\Phi_T(I)\leq I$, and thus $\Phi_T$ is contractive. For each $m\geq 1$ and $A\in \B(\fH)$, we use the notation
\[
 \Phi_T^{\circ m}(A)=(\underbrace{\Phi_T\circ \ldots \circ \Phi_T}_{m})(A).
\]
We say that the $K$-contraction $T$ is \emph{$K$-asymptotically vanishing} if the sequence of positive contractions $(\Phi_T^{\circ m}(I))_m$ converges to $0$ in the strong operator topology of $\B(\fH)$. 
If $T$ is $K$-asymptotically vanishing and $\fM\subset\fH$ is a closed subspace which is invariant for $T$, then it is easily verified that $T|_\fM$ is $K$-asymptotically vanishing as well. Likewise, if $\fM$ is co-invariant for $T$, then $P_{\fM}T|_{\fM}$ is also $K$-asymptotically vanishing.

We note that the model $M_x$ is always $K$-asymptotically vanishing. To see this, for each $m\geq 1$ we let $Q_m\in \B(\H)$ denote the orthogonal projection  onto the closure of $\spn\{ x^\alpha:|\alpha|\geq m\}$.
	It is then readily seen that % $\Phi_{M_x}^{\of n}(I)=P_n\Phi_{M_x}^{\of n}(I)$, and thus
	$\Phi_{M_x}^{\of m}(I)\leq Q_m$ for every $m\geq 1$, and thus the sequence $(\Phi_{M_x}^{\of m}(I_\H))_m$ converges to $0$ in the strong operator topology of $B(\H)$.  
In addition, it is easy to verify that a spherical unitary is $K$-asymptotically vanishing if and only if $\H$ is not maximal. 
This implies that whenever $\H$ is maximal, a $K$-asymptotically vanishing commuting $K$-contraction cannot have a non-trivial reducing subspace on which it restricts to be a spherical unitary; in other words, $K$-asymptotically vanishing commuting $K$-contractions are completely non-unitary in that context. 

\begin{lemma}\label{L:Kcontevalue}
 Let $\H$ be a maximal regular unitarily invariant space with kernel $K$.
	Let $T=(T_1,\dots,T_d)$ be a $K$-asymptotically vanishing commuting $K$-contraction. If $w\in \bC^d$ satisfies $\|w\|=1$, then $w\notin \pSpec(T)$.
\end{lemma}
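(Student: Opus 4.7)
The plan is to argue by contradiction. Suppose $w=(w_1,\ldots,w_d)\in \pSpec(T)$ with $\|w\|=1$, so there exists a non-zero vector $h\in\fH$ with $T_j h=w_j h$ for each $j$. The strategy is to show that $h$ is a fixed point of each iterate $\Phi_T^{\circ m}(I)$, which flatly contradicts the $K$-asymptotic vanishing hypothesis, since it would force $\|h\|=\|\Phi_T^{\circ m}(I)h\|\to 0$.

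The core computation is $\Phi_T(I)h=h$. First I would note that $T^{*\alpha}h=\ol{w}^\alpha h$ and hence $T^\alpha T^{*\alpha} h=|w^\alpha|^2 h$. Combining this with the multinomial identity
\[
  \sum_{|\alpha|=n}\frac{n!}{\alpha!}|w^\alpha|^2 = \left(\sum_{j=1}^d |w_j|^2\right)^n = \|w\|^{2n}=1,
\]
I obtain
\[
  \Phi_T(I)h = \sum_{n=1}^\infty b_n\sum_{|\alpha|=n}\frac{n!}{\alpha!} T^\alpha T^{*\alpha}h = \Big(\sum_{n=1}^\infty b_n\Big)h.
\]
At this point I would invoke the maximality hypothesis $\sum_{n=1}^\infty b_n=1$ to conclude $\Phi_T(I)h=h$.

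The next step is to promote this identity to $\Phi_T^{\circ m}(I)h=h$ for every $m\geq 1$ by induction. Assuming the identity for $m-1$, I would write $\Phi_T^{\circ m}(I)h = \Phi_T(\Phi_T^{\circ (m-1)}(I))h$, expand using the definition, and use the eigenvalue property together with the inductive hypothesis $\Phi_T^{\circ (m-1)}(I)h = h$ to reduce the computation to the previous one, again producing $h$. This step is essentially a rerun of the base case and poses no real obstacle.

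Finally, the $K$-asymptotic vanishing of $T$ yields $\|\Phi_T^{\circ m}(I)h\|\to 0$; but the previous step gives $\|\Phi_T^{\circ m}(I)h\|=\|h\|>0$ for every $m$, the desired contradiction. The only subtle point in the argument is the role of maximality: without it one would only get $\Phi_T(I)h = ch$ with $c=\sum_n b_n<1$, and the contradiction would evaporate because the iterates could legitimately decay. Hence the hypothesis that $\H$ be maximal enters precisely at the one spot where it is essential.
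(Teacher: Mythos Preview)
Your argument contains a genuine gap at the very first computational step. From $T_j h = w_j h$ you assert that $T^{*\alpha} h = \ol{w}^\alpha h$, but an eigenvector for $T_j$ is \emph{not} automatically an eigenvector for $T_j^*$. The standard single-operator trick (Cauchy--Schwarz forces $T^*h=\ol{w}h$ when $|w|=1$ and $\|T\|\leq 1$) does not apply here, because for a general $K$-contraction the individual operators $T_j$ need not be contractions: the $K$-contraction inequality only controls the aggregate $\sum_n b_n \sum_{|\alpha|=n}\frac{n!}{\alpha!}T^\alpha T^{*\alpha}$, and from the $n=1$ term one merely obtains $\sum_j T_jT_j^*\leq b_1^{-1}I$.

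In fact, the claim $T_j^*h=\ol{w_j}h$ \emph{is} true in this setting, but proving it is precisely the substance of the lemma. The paper establishes it by writing $T$ in block form relative to the eigenspace $\fH_w$ and using the $K$-contraction inequality together with maximality to force the off-diagonal blocks to vanish, so that $\fH_w$ is reducing. An equivalent route in your framework would be to combine the lower bound $\|T^{*\alpha}h\|\geq |w^\alpha|\,\|h\|$ (from Cauchy--Schwarz applied to $\langle T^{*\alpha}h,h\rangle=\ol{w}^\alpha\|h\|^2$) with the upper bound $\langle\Phi_T(I)h,h\rangle\leq\|h\|^2$ and maximality to force equality in Cauchy--Schwarz for each $\alpha$. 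Either way, this step requires an argument and cannot simply be ``noted''. Once it is supplied, the remainder of your proof (the induction on $m$ and the contradiction with $K$-asymptotic vanishing) is correct and arguably more direct than the paper's appeal to the spherical-unitary discussion.
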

\begin{proof}

Let $\fH$ be the Hilbert space on which $T_1,\ldots,T_d$ act. Let \[\fH_w=\{x\in\frk{H}: T_jx=w_jx\text{ for }j=1,\ldots,d\}.\]
With respect to the decomposition $\fH_w\oplus \fH_w^\bot$, for each $\alpha\in \bN^d$ we have
	\[ T^\alpha=\begin{bmatrix} w^\alpha I_{\fH_w} & A_\alpha \\ 0 & S_\alpha \end{bmatrix} \]
	for some $d$-tuples $A_\alpha, S_\alpha$. A routine calculation reveals that the $(1,1)$-entry of the operator 
	\[
	 \sum_{n=1}^\infty b_n\sum_{|\alpha|=n}\frac{n!}{\alpha!} T^\alpha T^{*\alpha}
	\]
is	 \[
\sum_{n=1}^\infty b_n\sum_{|\alpha|=n}\frac{n!}{\alpha!}(w^\alpha \ol{w}^\alpha I_{\fH_w}+A_\alpha A_\alpha^* ).
    \]
    Recall now that $T$ is a $K$-contraction, so that
	\begin{align*}
	 \sum_{n=1}^\infty b_n\sum_{|\alpha|=n}\frac{n!}{\alpha!} T^\alpha T^{*\alpha}\leq I_{\fH}.
	\end{align*}
	On the other hand, since $\|w\|=1$ we see that $wI_{\fH_w}$ is a spherical unitary and thus
	\[  \sum_{n=1}^\infty b_n\sum_{|\alpha|=n}\frac{n!}{\alpha!} w^\alpha \ol{w}^\alpha I_{\fH_w}=\sum_{n=1}^\infty b_n I_{\fH_w}=I_{\fH_w}. \]
	We conclude that
	\[
	 \sum_{n=1}^\infty b_n\sum_{|\alpha|=n}\frac{n!}{\alpha!}A_\alpha A_\alpha^* \leq 0
	\]
whence $b_1 A_j A_j^*=0$ for every $j=1,\ldots d$.
	Since $b_{1}=a_1>0$, we see that $A_jA_j^*=0$ for each $j=1,\ldots,d$ and thus $\fH_w$ is a reducing subspace on which $T$ restricts to be a spherical unitary. By the discussion preceding the lemma, this contradicts the fact that $T$ is $K$-asymptotically vanishing.
\end{proof}

When $\H$ is the Drury--Arveson space, it is well known that a commuting $K$-contraction is $K$-pure if and only if it is $K$-asymptotically vanishing \cite[Theorem 4.5]{arveson1998}. Because the model $M_x$ is always $K$-asymptotically vanishing (and this property is preserved by compressions to co-invariant subspaces), so are all $K$-pure commuting $K$-contractions. On the other hand, for non-maximal spaces all spherical unitaries are $K$-asymptotically vanishing yet they are not $K$-pure. In the maximal setting however, we recover the usual equivalence.

\begin{theorem}\label{T:pureKcontr}
	Let $\H$ be a maximal  regular  unitarily invariant space with kernel $K$, and let $T=(T_1,\ldots,T_d)$ be a commuting $K$-contraction on some Hilbert space $\fH$. Then, $T$ is $K$-pure if and only if it is $K$-asymptotically vanishing.
\end{theorem}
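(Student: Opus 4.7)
My plan is to compute $\Phi_T^{\of m}(I_\fH)$ via the dilation supplied by Theorem \ref{T:KCntrcFacts}(ii). The forward implication is essentially contained in the discussion preceding the lemma; the substance will lie in the converse, which should force the spherical unitary part of the dilation to vanish.

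The pivotal technical step will be the intertwining identity
\[ \Phi_T^{\of m}(I_\fH) \;=\; V^* \Phi_S^{\of m}(I_\fL) V, \qquad m \geq 1, \]
which holds whenever $V:\fH \to \fL$ is an isometry and $S=(S_1,\ldots,S_d)$ is a commuting tuple on $\fL$ with $V T^* = S^* V$. I would prove this by induction on $m$. Iterating the intertwining gives $V T^{*\alpha} = S^{*\alpha} V$ for every $\alpha \in \bN^d$, and taking adjoints yields $T^\alpha V^* = V^* S^\alpha$, so that
\[ T^\alpha (V^* A V) T^{*\alpha} \;=\; V^* S^\alpha A S^{*\alpha} V \]
for every $A \in \B(\fL)$. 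Summing against the weights in the definition of $\Phi$ gives $\Phi_T(V^* A V) = V^* \Phi_S(A) V$, which handles the induction step, while the base case follows by taking $A = I_\fL$ together with $V^* V = I_\fH$.

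For the forward direction, when $T$ is $K$-pure Theorem \ref{T:KCntrcFacts}(ii) provides an isometric intertwiner with $S = M_x \otimes I_\fE$. I would first verify the routine identity $\Phi_{M_x \otimes I_\fE}^{\of m}(I) = \Phi_{M_x}^{\of m}(I) \otimes I_\fE$ by induction, then argue that this tensor product converges strongly to $0$ because $\Phi_{M_x}^{\of m}(I)$ does and the uniform bound $\|\Phi_{M_x}^{\of m}(I)\| \leq 1$ allows a dominated-convergence argument when expanding vectors in $\H \otimes \fE$ along an orthonormal basis of $\fE$. The intertwining identity then delivers $\Phi_T^{\of m}(I_\fH) \to 0$ strongly.

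For the converse, I would take the general dilation $V:\fH \to (\H \otimes \fE) \oplus \fK$ with $S = (M_x \otimes I_\fE) \oplus U$, set $V_1 = P_{\H \otimes \fE} V$ and $V_2 = P_\fK V$, and use the block-diagonal splitting
\[ \Phi_S^{\of m}(I) \;=\; \Phi_{M_x \otimes I_\fE}^{\of m}(I) \,\oplus\, \Phi_U^{\of m}(I_\fK). \]
Maximality of $\H$ will enter exactly once here, via $\sum_{n \geq 1} b_n = 1$, forcing $\Phi_U(I_\fK) = I_\fK$ and hence $\Phi_U^{\of m}(I_\fK) = I_\fK$ for every $m$. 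Substituting into the intertwining identity yields
\[ \Phi_T^{\of m}(I_\fH) \;=\; V_1^* \Phi_{M_x \otimes I_\fE}^{\of m}(I) V_1 \,+\, V_2^* V_2, \]
and letting $m \to \infty$ the first summand vanishes strongly while the second is independent of $m$; thus the hypothesis of $K$-asymptotic vanishing forces $V_2^* V_2 = 0$, i.e., $V_2 = 0$, so $V$ ranges in $\H \otimes \fE$ and $T$ is $K$-pure. I expect the main obstacle to be a clean verification of the intertwining identity and careful bookkeeping with the tensor and direct-sum structures; the rest is structural. Without the maximality hypothesis the $\Phi_U^{\of m}$ term would itself decay, so a nontrivial spherical unitary part could coexist with asymptotic vanishing, which explains why the equivalence is restricted to maximal spaces.
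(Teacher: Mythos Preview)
Your proposal is correct and follows essentially the same route as the paper: both use the dilation from Theorem~\ref{T:KCntrcFacts}(ii) to establish the identity $\Phi_T^{\of m}(I_\fH)=V^*\Phi_S^{\of m}(I)V$, split $\Phi_S^{\of m}(I)$ block-diagonally as $(\Phi_{M_x}^{\of m}(I)\otimes I_\fE)\oplus \Phi_U^{\of m}(I_\fK)$, invoke maximality to make the unitary block constantly $I_\fK$, and then read off that $K$-asymptotic vanishing is equivalent to $P_\fK V=0$. The only cosmetic differences are that you separate the two implications and spell out the intertwining $\Phi_T(V^*AV)=V^*\Phi_S(A)V$ by induction, whereas the paper computes $T^\alpha T^{*\alpha}=V^*S^\alpha S^{*\alpha}V$ directly and treats both directions at once via the limit $\lim_m\langle \Phi_T^{\of m}(I)h,h\rangle=\|P_\fK Vh\|^2$.
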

\begin{proof}
Invoking Theorem \ref{T:KCntrcFacts}, we obtain two Hilbert spaces $\fE$ and $\frk{K}$, a spherical unitary $U$ on $\frk{K}$, and an isometry $V:\fH\to(\H\otimes \fE)\oplus \fK $ such that 
    $
     VT^*=S^*V
    $
where we put $S=(M_x\otimes I_\fE)\oplus U$. We note that $S, M_x\otimes I_\fE$ and $U$ are all commuting $K$-contractions. For each $\alpha\in \bN^d$ we obtain
\begin{align*}
  T^\alpha T^{*\alpha}&=T^\alpha V^*V T^{*\alpha}=V^*S^\alpha S^{*\alpha}V
\end{align*}
whence
	\begin{align*}
		\Phi_T^{\of m}(I_\fH) &= V^*\Phi_{S}^{\of m}(I)V \\
											&=  V^*( (\Phi_{M_x}^{\of m}(I_\H)\otimes I_{\frk{E}})\oplus \Phi_{U}^{\of m}(I_{\fK} ))V 
	\end{align*}
for every $m\geq 1$. Since $U$ is a spherical unitary and $\H$ is maximal, we have $\Phi_{U}^{\of m}(I_{\fK} )=I_\fK$ for every $m\geq 1$. Hence
\[
 \Phi_T^{\of m}(I_\fH) =V^*( (\Phi_{M_x}^{\of m}(I_\H)\otimes I_{\frk{E}})\oplus I_\fK ))V, \quad m\geq 1.
\]
On the other hand, we know that $M_x$ is $K$-asymptotically vanishing.  Hence, for $h\in\frk{H}$ we have
	\begin{align*}
		\lim_{m\to\infty}\ip{\Phi_T^{\of m}(I_\fH)h,h} &= \|P_{\fK} Vh\|^2.
	\end{align*}
	Thus $T$ is $K$-asymptotically vanishing if and only if $\ran V\perp \fK$. This is easily seen to imply the desired equivalence.
\end{proof}

\subsection{Compatibility of the functional calculi}
Presently, for a given commuting $K$-contraction $T$, there a three functional calculi that could be available: the $\A(\H)$-functional calculus, the $\M(\H)$-functional calculus, and the Taylor functional calculus. In this subsection, we check that these all agree whenever they are simultaneously well defined. The first step is the following.

\begin{theorem}
    Let $\H$ be a regular unitarily invariant space with kernel $K$. Let $T=(T_1,\ldots,T_d)$ be a commuting $K$-contraction and let $U\subset \bC^d$ be an open neighbourhood of the closed unit ball. Let $f\in \O(U)$. Then, we have that $f|_{\bB_d}\in \A(\H)$ and
	$ \tau_{U,T}(f)=\alpha_T(f|_{\BB_d}). $
	\label{T:TandAHFCAgree} % Taylor and AH functional calculus agree
\end{theorem}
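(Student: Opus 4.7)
My plan is to reduce to a small ball neighborhood of $\cc{\BB}_d$, approximate $f$ by its Taylor polynomials, and match the two functional calculi via their common action on polynomials. Since $\cc{\BB}_d\subset U$ is compact, I can choose $\epsilon>0$ with $V:=(1+\epsilon)\BB_d\subset U$. Because $T$ is a $K$-contraction, Theorem~\ref{T:KCntrcFacts}(iii) together with \eqref{Eq:incTayComm} and the inclusion $\sigma'(T)\subset\wSpec(T)$ yields $\TSpec(T)\subset\cc{\BB}_d\subset V$, so Theorem~\ref{T:Taylorprop}(i) reduces the problem to showing $f|_{\BB_d}\in\A(\H)$ and $\tau_{T,V}(f|_V)=\alpha_T(f|_{\BB_d})$. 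I would then expand $f$ as a Taylor series about $0$, $f(z)=\sum_\alpha c_\alpha z^\alpha$, with partial sums $p_N$, so that $p_N\to f|_V$ in $\O(V)$.

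The key step, which will also be the main obstacle, is to show that $(p_N)$ is Cauchy in $\M(\H)$; a direct estimate of $\|M_{p_N-p_M}\|_{\M(\H)}$ seems awkward because the multiplier norm is in general strictly larger than the sup norm on $\BB_d$. My idea is to sidestep this by applying the Taylor functional calculus not to $T$ but to the functional model $M_x$ on $\H$. Since $M_x$ is itself a $K$-contraction, one has $\TSpec(M_x)\subset\cc{\BB}_d\subset V$, and the defining properties $\tau_{M_x,V}(x_j)=M_{x_j}$ together with the algebra homomorphism property force $\tau_{M_x,V}(p)=M_p$ for every polynomial $p$. The continuity estimate in the definition of the Taylor calculus then automatically converts the $\O(V)$-Cauchyness of $(p_N)$ into operator-norm Cauchyness of $(M_{p_N})$, i.e., $\M(\H)$-Cauchyness of $(p_N)$. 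Letting $g\in\M(\H)$ denote the limit, each $p_N$ is a polynomial so $g\in\A(\H)$, and pointwise evaluation at $z\in\BB_d$ gives $g(z)=\lim p_N(z)=f(z)$, whence $g=f|_{\BB_d}$.

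With $f|_{\BB_d}\in\A(\H)$ in hand, I would conclude by applying continuity twice. On the one hand, continuity of $\tau_{T,V}$ yields $\tau_{T,V}(p_N)\to\tau_{T,V}(f|_V)$ in $\B(\fH)$; on the other hand, contractivity of $\alpha_T$ on $\A(\H)$ combined with the $\M(\H)$-convergence $p_N\to f|_{\BB_d}$ yields $\alpha_T(p_N)\to\alpha_T(f|_{\BB_d})$. Since both calculi return $p_N(T)$ on each polynomial $p_N$, the two limits coincide, giving $\tau_{T,V}(f|_V)=\alpha_T(f|_{\BB_d})$, which is the desired identity after the reduction from the first paragraph.
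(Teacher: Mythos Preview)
Your proposal is correct and follows essentially the same line as the paper. The paper only notes that $\TSpec(M_x)$ and $\TSpec(T)$ lie in $\cc{\BB}_d$ and then defers to \cite[Theorem 2.6(i)]{CTInterp}; the explicit mention of $\TSpec(M_x)$ (and not just $\TSpec(T)$) signals precisely your key step of applying the Taylor functional calculus to the model $M_x$ to upgrade $\O(V)$-convergence of the Taylor polynomials to $\M(\H)$-norm convergence, after which both calculi are matched on polynomials and passed to the limit.
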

\begin{proof}
    It follows from Theorem \ref{T:KCntrcFacts} that $\TSpec(M_x)$ and $\TSpec(T)$ are contained in the closed unit ball. We may thus follow the proof of \cite[Theorem 2.6 (i)]{CTInterp} verbatim.    
\end{proof}

Let $f\in \M(\H)$. For each $0\leq r<1$, we define a function $f_r$ as
\[
 f_r(z)=f(rz), \quad z\in \bB_d.
\]
It then follows from Theorem \ref{T:TandAHFCAgree} that $f_r\in \A(\H)$ for every $0\leq r<1$. Furthermore, standard harmonic analysis techniques using the Poisson kernel show that $\|M_{f_r}\|\leq \|M_f\|$ for every $0\leq r<1$ and that the net $(M_{f_r})_{0\leq r<1}$ converges to $M_f$ in the strong operator topology of $B(\H)$ as $r$ increases to $1$. In particular, we conclude that $\A(\H)$ (and hence the polynomials) are weak-$*$ dense in $\M(\H)$. See \cite[Subsection 3.5]{shalit2014} for more details; the discussion therein readily adapts to our setting.

We can now show that the different functional calculi are compatible. The following extends \cite[Theorem 2.6 (ii)]{CTInterp}.

\begin{theorem}
	Suppose $\H$ is a regular unitarily invariant space with kernel $K$ and that $T=(T_1,\ldots,T_d)$ is an absolutely continuous commuting $K$-contraction.
	Let $U\subset \bC^d$ be an open neighborhood of $\TSpec(T)\cup \bB_d$, and let $f\in \O(U)$ be such that $f|_{\BB_d}\in\MH$.
	Then
	$\tau_{T,U}(f)=\widehat{\alpha}_T(f|_{\bB_d}). $
	\label{T:TnMHFCAgree}
\end{theorem}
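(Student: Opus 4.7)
The approach is to reduce to Theorem \ref{T:TandAHFCAgree} via dilation and then pass to a limit. For each $r\in(0,1)$, set $f_r(z)=f(rz)$. Because $U\supset \bB_d$, the open set $r^{-1}U:=\{z\in\CC^d:rz\in U\}$ contains $r^{-1}\bB_d$, which is an open neighbourhood of $\cc{\bB}_d$. Hence $f_r\in \O(r^{-1}U)$, and Theorem \ref{T:TandAHFCAgree} yields $f_r|_{\bB_d}\in\A(\H)$ along with
\[
    \tau_{T,r^{-1}U}(f_r)=\alpha_T(f_r|_{\bB_d})=\MHFC_T(f_r|_{\bB_d}).
\]
The aim is to show that, as $r\to 1^-$, the two sides of this identity converge in compatible topologies to $\tau_{T,U}(f)$ and $\MHFC_T(f|_{\bB_d})$, respectively.

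For the Taylor calculus side, I would first pick a relatively compact open $V$ with $\TSpec(T)\subset V\subset \cc{V}\subset U$; such a $V$ exists by the compactness of $\TSpec(T)$. Since $\cc{V}$ is a compact subset of the open set $U$, there is some $r_0\in(0,1)$ such that $\cc{V}\subset r^{-1}U$ for every $r\in[r_0,1)$. For such $r$, both $f$ and $f_r$ lie in $\O(W_r)$ with $W_r:=U\cap r^{-1}U$, and part (i) of Theorem \ref{T:Taylorprop} together with linearity gives
\[
    \tau_{T,U}(f)-\tau_{T,r^{-1}U}(f_r)=\tau_{T,W_r}(f-f_r).
\]
The continuity estimate of the Taylor functional calculus then bounds the norm of the right-hand side by $C_{T,V}\sup_{z\in V}|f(z)-f_r(z)|$, which tends to zero because $f$ is uniformly continuous on a compact neighbourhood of $\cc{V}$ in $U$. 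Hence $\tau_{T,r^{-1}U}(f_r)\to\tau_{T,U}(f)$ in operator norm.

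For the multiplier calculus side, I would invoke the observation recorded just before the theorem that $M_{f_r}\to M_f$ in the strong operator topology of $\mc{B}(\H)$ as $r\to 1^-$, with uniform bound $\|M_{f_r}\|\leq\|M_f\|$. Since $\M(\H)$ inherits its weak-$*$ topology from $\mc{B}(\H)$ and SOT convergence of a bounded net implies weak-$*$ convergence, this shows $f_r\to f$ weak-$*$ in $\M(\H)$. Weak-$*$ continuity of $\MHFC_T$ then yields $\MHFC_T(f_r|_{\bB_d})\to\MHFC_T(f|_{\bB_d})$ weak-$*$ in $\mc{B}(\fH)$. As norm convergence dominates weak-$*$ convergence, equating the two limits in the displayed identity gives $\tau_{T,U}(f)=\MHFC_T(f|_{\bB_d})$.

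The main technical obstacle is the bookkeeping across the varying domains $r^{-1}U$, which is why fixing the compact intermediate set $\cc{V}\subset U$ at the outset is essential: it supplies a single constant $C_{T,V}$ valid for the Taylor estimate on every $W_r$ with $r\in[r_0,1)$, thereby converting the uniform scalar convergence of $f_r$ to $f$ on $\cc{V}$ into operator norm convergence on the Taylor side.
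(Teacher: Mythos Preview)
Your proposal is correct and follows essentially the same approach as the paper: dilate $f$ to $f_r$, invoke Theorem \ref{T:TandAHFCAgree} for each $r$, then pass to the limit using weak-$*$ continuity of $\MHFC_T$ on one side and the norm continuity of the Taylor calculus on a fixed relatively compact neighbourhood of $\TSpec(T)$ on the other. The only cosmetic difference is that the paper introduces two nested neighbourhoods $W\subset V$ of $\TSpec(T)$ and restricts everything to the fixed domain $V$ (so the Taylor estimate is applied once, with inner set $W$), whereas you keep a single $V$ and work on the varying domains $W_r=U\cap r^{-1}U$; your observation that the constant $C_{T,V}$ does not depend on the ambient domain (which follows from the consistency property in Theorem \ref{T:Taylorprop}(i)) is exactly what makes this harmless.
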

\begin{proof}
	An elementary compactness argument yields a number $\delta>0$ with the property that if we let $V$ be the bounded open set consisting of those points $z\in U$ such that $\mathrm{dist}(z,\TSpec(T))<\delta$, then $\ol{V}\subset rU$ whenever $1\leq r<1+\delta$. 	The same argument then yields a number $0<\delta'<\delta$ with the property that if we let $W$ be the open set consisting of those points $z\in V$ such that $\mathrm{dist}(z,\TSpec(T))<\delta'$, then $\ol{W}\subset rV$ whenever $1\leq r<1+\delta'$. Note in particular that $W\subset V$ and that both are open neighbourhoods of $\TSpec(T)$.

		For an integer $n>1$, we define a function $f_n$ on $(1-1/n)^{-1}U$ by putting 
		\[f_n(z)
			=f((1-1/n)z)
		\]
		for $z\in (1-1/n)^{-1}U$.
		We also set $\psi=f|_{\bB_d}$ and $\psi_n=f_n|_{\bB_d}$. By assumption, $\psi\in \M(\H)$ so by Theorem \ref{T:TandAHFCAgree} we find that $\psi_n\in \A(\H)$ and 
	\[ \alpha_T(\psi_n)=\tau_{T,B_n}(f_n|_{B_n})\]
	where $B_n=(1-1/n)^{-1}\BB_d$.
	When $n$ is so large that $(1-1/n)^{-1}<1+\delta$, we see that $\cc{V}\subset (1-1/n)^{-1}U$ and hence $f_n$ is defined on $\ol{V}$. 	It follows from part (i) of Theorem \ref{T:Taylorprop} that
	\[ \tau_{T,V}(f_n|_{V})=\tau_{T,B_n}(f_n|_{B_n})=\alpha_T(\psi_n)\]
	for $n$ large enough.
	We may now use the remark preceding the theorem to conclude that the sequence $(M_{\psi_n})_n$ converges to $M_\psi$ in the weak-$*$ topology. But
	$T$ is absolutely continuous, so that the sequence $(\alpha_T(\psi_n))_n$ converges to $\widehat{\alpha}_T(\psi)$ in the weak-$*$ topology. Moreover,  another application of part (i) of Theorem \ref{T:Taylorprop} shows that $\tau_{T,U}(f)=\tau_{T,V}(f|_V)$. Thus, the proof will be complete once we show that the tail of the sequence $(\tau_{T,V}(f_n|_{V}))_n$ converges in norm to $\tau_{T,V}(f|_V)$.

    By the properties of the Taylor functional calculus (see Subsection \ref{SS:multispectra}) there is a constant $C>0$ such that
	\[ \|\tau_{T,V}(g)\|\leq C\sup_{z\in W}|g(z)| \]
	for every $g\in\mc{O}(V)$. When $n$ is so large that $1/(1-1/n)<1+\delta'$, we have that
		\[ \|\tau_{T,V}(f)-\tau_{T,V}(f_n)\|\leq C\cdot\sup_{z\in W}|f(z)-f_n(z)| \]
    and $(1-1/n)\ol{W}\subset V$. Since $f$ is uniformly continuous on the compact subset $\ol{V}$, a routine argument reveals that the sequence $(f_n|_W)_n$ converges uniformly to $f|_W$, and the proof is complete.
\end{proof}

In light of the previous theorem, we may (and will) write $f(T)$ in place of $\alpha_T(f)$, $\widehat{\alpha}_T(f)$, or $\tau_{T,U}(f)$, as appropriate.

\section{The spectra of functional models}\label{S:Spec} %%%%%%%%%%%%%%%%%%%%%%%%%%%%%%%%%%%%%%%%%%%%%%%%%%%%%%%%%

\subsection{The support of an ideal}\label{SS:supp}
The goal of this subsection is to identify the various spectra introduced in Subsection \ref{SS:multispectra} for the functional model $\kZa$. Our descriptions are based partly on the zero set of the ideal $\fa$. Before proceeding, we introduce some notation. Given a family $\F$ of functions defined on some set $E\subset \cc{\BB}_d$, we define the \textit{zero set} of $\F$ in $E$ as
\[
 \Z_E(\F)=\{z\in E:f(z)=0\text{ for all }f\in\mc{F}\}.
\]
For our purposes, $E$ will always be either $\bB_d$ or $\ol{\bB}_d$. 

\begin{theorem}\label{T:SpecInZ}
 	Let $\H$ be a regular unitarily invariant space with kernel $K$. Let $T=(T_1,\dots,T_d)$ be a commuting $K$-contraction. Then, we have that
 	\[
 	 \TSpec(T) \subset \grs'(T) \subset\wSpec(T)\subset \ZcBd(\Ann(T)\cap \A(\H)).
 	\]
    If $T$ is absolutely continuous, then
 	\[	\TSpec(T)\cap\BB_d \subset \grs'(T)\cap\BB_d \subset \wSpec(T)\cap\BB_d \subset  \ZBd(\Ann (T)). \]
\end{theorem}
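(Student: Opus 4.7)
The plan is as follows. The first two inclusions in each chain are already in hand: $\TSpec(T) \subset \sigma'(T)$ is \eqref{Eq:incTayComm}, and $\sigma'(T) \subset \wSpec(T)$ was noted just before \eqref{Eq:incTayComm}. So only the rightmost inclusion in each chain requires argument.

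For $\wSpec(T) \subset \ZcBd(\Ann(T) \cap \A(\H))$, I would fix $z \in \wSpec(T)$. By Theorem \ref{T:KCntrcFacts}(iii) one has $z \in \cc{\BB}_d$, and since $\wcA{T}$ is commutative, there is a character $\chi$ of $\wcA{T}$ satisfying $\chi(T_j) = z_j$ for $1 \leq j \leq d$. Composing with the $\A(\H)$-functional calculus $\alpha_T : \A(\H) \to \wcA{T}$ from Theorem \ref{T:KCntrcFacts}(iv) yields a continuous character $\tilde\chi = \chi \circ \alpha_T$ of $\A(\H)$ with $\tilde\chi(x_j) = z_j$. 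Since polynomials are dense in $\A(\H)$ and every element of $\A(\H)$ extends continuously to $\cc{\BB}_d$ (per the remark closing Subsection \ref{SS:RKHS}), the character $\tilde\chi$ must coincide with point evaluation at $z$. Hence any $f \in \A(\H)$ with $\alpha_T(f) = 0$ satisfies $f(z) = \tilde\chi(f) = \chi(\alpha_T(f)) = 0$, giving the desired inclusion.

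For the sharpened inclusion $\wSpec(T) \cap \BB_d \subset \ZBd(\Ann(T))$ under the absolute continuity hypothesis, I would again fix $z \in \wSpec(T) \cap \BB_d$ and a character $\chi$ of $\wcA{T}$ with $\chi(T_j) = z_j$. For $f \in \M(\H)$, Theorem \ref{T:GleasonTrick} delivers multipliers $f_1, \ldots, f_d \in \M(\H)$ such that
\[ f = f(z) + \sum_{j=1}^d (x_j - z_j) f_j. \]
Because polynomials are weak-$*$ dense in $\M(\H)$ and $\MHFC_T$ is weak-$*$ continuous, each operator $\MHFC_T(f_j)$ is a weak-$*$ limit of polynomials in $T$, hence lies in $\wcA{T}$ and $\chi$ is applicable to it. Applying $\MHFC_T$ to the above identity and then $\chi$ annihilates the sum, since $\chi(T_j - z_j I) = 0$, leaving $\chi(\MHFC_T(f)) = f(z)$. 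If $f \in \Ann(T)$, then $\MHFC_T(f) = 0$, forcing $f(z) = 0$.

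The main subtlety is to verify that the character $\chi$ of $\wcA{T}$ interacts correctly with the functional calculi: in the first part, that $\tilde\chi$ genuinely collapses to point evaluation on all of $\A(\H)$ (handled by polynomial density together with the continuous extension of $\A(\H)$-elements to $\cc{\BB}_d$), and in the second, that the Gleason factors $\MHFC_T(f_j)$ lie in $\wcA{T}$ so that $\chi$ can be evaluated at them (handled by the weak-$*$ continuity of $\MHFC_T$ and the weak-$*$ density of polynomials in $\M(\H)$). No machinery beyond what Section \ref{S:Prelims} has already assembled is required.
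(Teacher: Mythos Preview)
Your argument is correct and matches the paper's approach closely: both parts rely on picking a character $\chi$ of $\wcA{T}$ realizing a given point of $\wSpec(T)$, composing with the appropriate functional calculus, and (for the second inclusion) invoking Theorem \ref{T:GleasonTrick}. The only cosmetic difference is that for the absolutely continuous case the paper argues by contrapositive---starting from $z\notin\ZBd(\Ann(T))$ and exhibiting $I=\sum_j(T_j-z_jI)(-f(z)^{-1}f_j(T))$ directly---whereas you apply $\chi$ to the Gleason identity; the underlying content (in particular the verification that $f_j(T)\in\wcA{T}$) is identical.
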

\begin{proof}
The inclusions $\TSpec(T) \subset \grs'(T) \subset\wSpec(T)$ always hold.
 Let $f\in \Ann(T)\cap \A(\H)$ and let $\chi$ be a character of the weak-$*$ closed unital algebra generated by $T$. By part (iii) of Theorem \ref{T:KCntrcFacts}, we see that $\chi(T)\in \ol{\bB}_d$. Since the $\AH$-functional calculus is norm continuous, it follows that \[f(\chi(T))=\chi(f(T))=0.\] Hence, $\chi(T)\in \ZcBd(\Ann(T)\cap \A(\H))$ and we conclude that
 \[
 	 \wSpec(T)\subset \ZcBd(\Ann(T)\cap \A(\H)).
 	\]
 Assume now that $T$ is absolutely continuous and let $z\in\BB_d\bksl\ZBd(\Ann (T))$.
	Then there is $f\in\Ann(T)$ such that $f(z)\neq 0$.
	By Theorem \ref{T:GleasonTrick}, there are $f_1,\dots,f_d\in \mc{M}(\mc{H})$ such that $f=f(z)+\sum_{j=1}^d(x_j-z_j)f_j$, and thus
	\[ I=\sum_{j=1}^d (T_j-z_jI) (-f(z)^{-1}f_j(T)). \]
	Because the operators $f_1(T),\dots,f_d(T)$ lie in the weak-$*$ closed unital algebra generated by $T$, we conclude that $z\in\BB_d\bksl\wSpec(T)$. This shows that \[\wSpec(T)\cap\BB_d \subset  \ZBd(\Ann (T)). \qedhere \]
\end{proof}

The first set of inclusions above is not always very informative. This is best illustrated in the classical situation, as follows.

\begin{example}\label{E:AHAnn}
 Let $\H$ be the Hardy space on the unit disc. In this setting, a $K$-contraction is just a usual contraction. Accordingly, let $T$ be an absolutely continuous contraction. Let $\theta$ be a Blaschke product such that the cluster set of its zeros contains the unit circle and assume that $\Ann(T)=\theta \M(\H)$. By continuity, we see that $\Ann(T)\cap \A(\H)=\{0\}$. Thus, in this case the first set of inclusions in Theorem \ref{T:SpecInZ} is trivial. 
 \qed
\end{example}

Example \ref{E:AHAnn} indicates that Theorem \ref{T:SpecInZ} does not adequately describe the boundary portions of the spectra in general. As can be inferred from the proof, one reason for this is that there is no version of Theorem \ref{T:GleasonTrick} for points on the sphere. The following allows us to tackle this problem.

Let $\fa\subset \M(\H)$ be an ideal. We define its \textit{support} to be the set consisting of all those $z\in\CC^d$ for which no element $f\in\frk{a}$ admits a decomposition of the form $f=c+\sum_{j=1}^d(x_j-z_j)f_j$ for $c\in\CC\bksl\{0\}$ and $f_1,\dots,f_d\in\mc{M}(\mc{H})$.
More concisely, we have that $z\in \supp(\fa)$ if and only if
\[  1\nin \frk{a}+\sum_{j=1}^d(x_j-z_j)\mc{M}(\mc{H}). \]
An alternative description of the support goes as follows.

\begin{theorem}
    Let $\H$ be a regular unitarily invariant space and let $\frk{a}$ be a weak-$*$ closed ideal of $\MH$. Let $z\in \bC^d$. Then, $z\nin\supp(\frk{a})$ if and only if there exist $f\in\frk{a}$, $c\in\CC\bksl\{0\}$ and $L>0$ such that
	\[ M_{f-c}M_{f-c}^* \leq L^2 M_{x-z}M_{x-z}^*. \]
	In this case, we have $|f(w)-c|\leq L \|w-z\|$ for every $w\in\BB_d$.
	\label{T:DfLTDx}
\end{theorem}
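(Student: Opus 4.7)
The key observation is that $z\notin\supp(\frk{a})$ is, by definition, equivalent to the existence of $f\in\frk{a}$, $c\in\CC\setminus\{0\}$, and $g_1,\ldots,g_d\in\M(\H)$ with $f-c=\sum_{j=1}^d(x_j-z_j)g_j$. Arranging $G=(g_1,\ldots,g_d)^{\top}$ as a column multiplier in $\M(\H,\H\otimes\CC^d)$ and viewing $M_{x-z}:\H\otimes\CC^d\to\H$ as the row operator $(M_{x_1}-z_1,\ldots,M_{x_d}-z_d)$, the displayed multiplier identity is equivalent to the operator-level identity $M_{f-c}=M_{x-z}M_G$. The plan is to use this to translate freely between the algebraic condition and an operator inequality.

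The forward implication is then immediate: one computes
\[ M_{f-c}M_{f-c}^*=M_{x-z}M_GM_G^*M_{x-z}^*\leq\|M_G\|^2\,M_{x-z}M_{x-z}^*, \]
so $L=\|M_G\|$ suffices. For the converse, the key tool is the complete Nevanlinna--Pick version of Leech's factorization theorem, a standard consequence of the Ball--Trent--Vinnikov commutant lifting theorem \cite{BTV2001} already invoked in the proof of Theorem \ref{T:QCIsom}. Given the operator inequality $M_{f-c}M_{f-c}^*\leq L^2 M_{x-z}M_{x-z}^*$, this theorem produces a column multiplier $M_G\in\M(\H,\H\otimes\CC^d)$ with $\|M_G\|\leq L$ and $M_{f-c}=M_{x-z}M_G$. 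Reading off components yields the decomposition $f-c=\sum_j(x_j-z_j)g_j$ with $g_j\in\M(\H)$, witnessing $z\notin\supp(\frk{a})$.

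For the pointwise estimate, I would note that any column multiplier $M_G\in\M(\H,\H\otimes\CC^d)$ satisfies $M_G^*(k_w\otimes v)=\overline{\langle G(w),v\rangle}\,k_w$ for all $w\in\BB_d$ and $v\in\CC^d$; comparing norms forces the pointwise bound $\|G(w)\|_{\CC^d}\leq\|M_G\|\leq L$. Cauchy--Schwarz applied to the scalar identity $f(w)-c=\sum_j(w_j-z_j)g_j(w)$ then delivers $|f(w)-c|\leq\|w-z\|\,\|G(w)\|\leq L\|w-z\|$. The sole substantive ingredient is the appeal to Leech's theorem, and the CNP hypothesis on $K$ is essential at precisely this point; every other step is routine operator-theoretic bookkeeping.
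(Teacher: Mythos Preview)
Your proposal is correct and follows essentially the same approach as the paper: both directions hinge on the equivalence between a factorization $f-c=\sum_j(x_j-z_j)g_j$ and the operator identity $M_{f-c}=M_{x-z}M_G$, with the converse handled by Leech's theorem (the paper cites \cite[Theorem 8.57]{agler2002} rather than deriving it from \cite{BTV2001}, but the content is the same). The pointwise estimate via Cauchy--Schwarz and the bound $\|G(w)\|\leq\|M_G\|$ also matches the paper's argument.
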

\begin{proof}
	Suppose first that $z\nin\supp(\frk{a})$. There exist $f\in\frk{a}$, $c\in\CC\bksl\{0\}$ and $f_1,\dots,f_d\in\mc{M}(\mc{H})$ such that $f-c=\sum_{j=1}^d (x_j-z_j)f_j$.
	In particular, we find  $M_{f-c}=M_{x-z} F$ where
	\[ x-z=\begin{bmatrix} x_1-z_1 & \ldots & x_d-z_d\end{bmatrix}\in \M(\H\otimes\bC^d,\H)\]
	and 
	\[
	F=\begin{bmatrix} f_1 \\ \vdots \\ f_d
	       \end{bmatrix}\in \M(\H,\H\otimes \bC^d).
    \]
	Thus
	\[ M_{f-c}M_{f-c}^*=M_{x-z}M_{F}M_{F}^*M_{x-z}^*\leq \|M_{F}\|^2M_{x-z}M_{x-z}^*. \]
Moreover, using the Cauchy-Schwarz inequality we obtain
	\begin{align*}
	|f(w)-c|&\leq \sum_{j=1}^d|w_j-z_j| |f_j(w)| \leq \|F(w)\| \|w-z\|\\
	&\leq \|M_F\|\|w-z\|
	 	\end{align*}
for every $w\in \bB_d$.

Conversely, suppose that there exists $f\in\frk{a}$, $c\in\CC\bksl\{0\}$ and $L>0$ such that $M_{f-c}M_{f-c}^*\leq L^2 M_{x-z}M_{x-z}^*$.
By \cite[Theorem 8.57]{agler2002}, we find
	\[
	 	F=\begin{bmatrix} f_1 \\ \vdots \\ f_d
		\end{bmatrix}\in \M(\H,\H\otimes \bC^d)
	\]
	such that $\|M_F\|\leq L$ and $f-c=\sum_{j=1}^d (x_j-z_j)f_j$. Thus $z\nin\supp(\frk{a})$.
\end{proof}

We next establish some basic properties of the support.

\begin{theorem}\label{T:suppprop}
 	Let $\H$ be a regular unitarily invariant space. The following statements hold.
 	\begin{enumerate}[{\rm (i) }]
 	 \item Let $\frk{a}\subset \mc{M}(\mc{H})$ be an ideal. Then we have	 	 $\supp(\frk{a})\cap\BB_d=\ZBd(\frk{a})$
 	 and $\supp(\frk{a})\subset\cc{\BB}_d.$
 	 
 	 \item 	Let $\frk{a},\frk{b}\subset \mc{M}(\mc{H})$ be ideals such that $\frk{a}\subset \frk{b}$. Then $\supp(\frk{b})\subset\supp(\frk{a})$.
	
	\item Let $\frk{a},\frk{b}\subset \M(\H)$ be ideals.
	Then
	\[ \supp(\frk{a}\cap\frk{b})=\supp(\frk{a})\cup\supp(\frk{b})=\supp(\frk{a}\cdot\frk{b}), \]
	and
	\[\supp(\frk{a}+\frk{b})\subset \supp(\frk{a})\cap \supp(\frk{b}). \]
	
	\item  Let $\frk{a},\frk{b}\subset \M(\H)$ be weak-$*$ closed ideals. Then
	\[ \supp(\frk{a}\cdot\frk{b})=\supp(\cc{\frk{a}\cdot\frk{b}}^{w*}). \]
 	\end{enumerate}

\end{theorem}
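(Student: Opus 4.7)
The plan is to treat the four assertions in turn, with (ii) serving as a tautology that powers the rest.

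For (i), I would first establish $\supp(\fa)\cap\BB_d=\ZBd(\fa)$. For $z\in\BB_d$, if some $f\in\fa$ admits a decomposition $f=c+\sum_{j=1}^d(x_j-z_j)f_j$ with $f_j\in\MH$, then evaluating at $z$ gives $f(z)=c$, so $c\neq 0$ forces $f(z)\neq 0$; this yields $\ZBd(\fa)\subset\supp(\fa)\cap\BB_d$. Conversely, if $z\in\BB_d\setminus\ZBd(\fa)$, pick $f\in\fa$ with $f(z)\neq 0$ and apply Theorem \ref{T:GleasonTrick} to produce the required decomposition $f=f(z)+\sum_{j=1}^d(x_j-z_j)f_j$, showing $z\notin\supp(\fa)$. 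For $\supp(\fa)\subset\cc{\BB}_d$, I would take $z\notin\cc{\BB}_d$ and invoke Theorem \ref{T:KCntrcFacts} to see that $\wSpec(M_x)\subset\cc{\BB}_d$; combined with \eqref{Eq:commMH}, the inclusion $\wcA{M_x}\subset\MH$ then gives $g_1,\ldots,g_d\in\MH$ with $1=\sum_{j=1}^d(x_j-z_j)g_j$. Since $0\in\fa$, the identity $0=(-1)+\sum_{j=1}^d(x_j-z_j)g_j$ exhibits $z\notin\supp(\fa)$.

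Statement (ii) is immediate from the definition: any decomposition witnessing $z\notin\supp(\fa)$ uses an element $f\in\fa\subset\fb$ and so also witnesses $z\notin\supp(\fb)$.

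For (iii), the easy inclusions $\supp(\fa)\cup\supp(\fb)\subset\supp(\fa\cdot\fb)\cap\supp(\fa\cap\fb)$ and $\supp(\fa+\fb)\subset\supp(\fa)\cap\supp(\fb)$ follow from (ii) applied to the obvious containments $\fa\cdot\fb\subset\fa\cap\fb\subset\fa,\fb\subset\fa+\fb$. The nontrivial content is the reverse inclusion. Given $z\notin\supp(\fa)\cup\supp(\fb)$, I would choose $f\in\fa$ and $g\in\fb$ with $f=c+\sum_{j=1}^d(x_j-z_j)f_j$ and $g=d+\sum_{j=1}^d(x_j-z_j)g_j$ for some $c,d\in\CC\setminus\{0\}$ and $f_j,g_j\in\MH$. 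Expanding the product and collecting like terms, one obtains $fg=cd+\sum_{j=1}^d(x_j-z_j)h_j$ for suitable $h_j\in\MH$, with $cd\neq 0$. Because $fg\in\fa\cdot\fb\subset\fa\cap\fb$, this single decomposition simultaneously shows $z\notin\supp(\fa\cdot\fb)$ and $z\notin\supp(\fa\cap\fb)$.

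Finally, (iv) is a formal consequence of (ii) and (iii). Since $\fa$ and $\fb$ are weak-$*$ closed, so is $\fa\cap\fb$, and one has the chain $\fa\cdot\fb\subset\cc{\fa\cdot\fb}^{w*}\subset\fa\cap\fb$. Applying (ii) twice produces
\[
\supp(\fa\cap\fb)\subset\supp\bigl(\cc{\fa\cdot\fb}^{w*}\bigr)\subset\supp(\fa\cdot\fb),
\]
and by (iii) the outer two sets coincide, forcing equality throughout. I do not anticipate a genuine obstacle anywhere: the only inputs that are not purely formal are Gleason's trick and the spectral bound for $M_x$ used in (i), together with the elementary product expansion at the heart of (iii).
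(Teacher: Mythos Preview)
Your proposal is correct and follows essentially the same route as the paper. The only noteworthy variation is in the second half of (i): where the paper exhibits the explicit multiplier $1/\ip{x-z,z}$ (analytic on a neighbourhood of $\cc{\BB}_d$ when $\|z\|>1$, hence in $\AH$ by Theorem~\ref{T:TandAHFCAgree}) to write any $f\in\fa$ as $1+\sum_j(x_j-z_j)\cc{z}_j(f-1)/\ip{x-z,z}$, you instead invoke the spectral bound $\wSpec(M_x)\subset\cc{\BB}_d$ from Theorem~\ref{T:KCntrcFacts} to extract $g_j\in\MH$ with $1=\sum_j(x_j-z_j)g_j$ and then apply this to $0\in\fa$. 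Both arguments are valid and equally short; yours is slightly more abstract, the paper's slightly more explicit.
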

\begin{proof}
(i) Let $z\in\BB_d\bksl\ZBd(\frk{a})$ and choose $f\in\frk{a}$ such that $f(z)\neq 0$. Use now Theorem \ref{T:GleasonTrick} to find $f_1,\dots,f_d\in\mc{M}(\mc{H})$ such that $f=f(z)+\sum_{j=1}^d (x_j-z_j)f_j$. By definition, we find $z\nin\supp\frk{a}$. Conversely, let $z\in\BB_d\bksl\supp\frk{a}$. There is $c\in\CC\bksl\{0\}$ and $g_1,\cdots,g_d\in\mc{M}(\mc{H})$ such that the function $g=c+\sum_{j=1}^d(x_j-z_j)g_j$ lies in $\frk{a}$.
	In particular, $g(z)=c\neq 0$ and so $z\nin\ZBd(\frk{a})$. We conclude that $\supp(\frk{a})\cap\BB_d=\ZBd(\frk{a})$.

Next, let $z\in\CC^d$ with $\|z\|>1$. By the Cauchy--Schwarz inequality we see that the function $1/\ip{x-z,z}$ is analytic on $\{w\in \bC^d: \|w\|<\|z\|\}$, which is an open neighbourhood of the closed unit ball. Thus, by Theorem \ref{T:TandAHFCAgree} we see that $1/\ip{x-z,z}\in\mc{M}(\mc{H})$. For any $f\in\frk{a}$ we find
	\[ f=1+\ip{x-z,z}\frac{f-1}{\ip{x-z,z}}=1+\sum_{j=1}^d (x_j-z_j)\frac{\cc{z}_j (f-1)}{\ip{x-z,z}} \]
	whence $z\nin\supp(\frk{a})$. We conclude that $\supp(\fa)\subset \ol{\bB}_d$.

(ii) This is a straightforward consequence of the definition.

(iii) Because
	\[ \frk{a}\cdot \frk{b}\subset \frk{a}\cap\frk{b} \subset \frk{a},\frk{b}\subset \frk{a}+\frk{b} \]
	it follows from (ii) that
	\[ \supp(\frk{a})\cup\supp(\frk{a_2})\subset \supp(\frk{a}\cap\frk{b})\subset \supp(\frk{a}\cdot\frk{b}), \]
	and that
	\[ \supp(\frk{a}+\frk{b})\subset \supp(\frk{a})\cap\supp(\frk{b}). \]
	Let $z\in\cc{\BB}_d\bksl(\supp(\frk{a})\cup\supp(\frk{b}))$. There exist $f_1\in\frk{a}, f_2\in\frk{b}$, $c_1,c_2\in\CC\bksl\{0\}$ and $g_{1},\ldots,g_{d}$, $h_{1},\ldots,h_{d}\in\mc{M}(\mc{H})$ such that
	\[ f_1=c_1+\sum_{j=1}^d (x_j-z_j)g_{j}, \quad f_2=c_2+\sum_{j=1}^d (x_j-z_j)h_{j}. \]
	Thus
	\[	f_1 f_2 =  c_1c_2+\sum_{i=1}^d (x_i-z_i)\left(c_2g_{i}+c_1h_{i}+g_{i}\sum_{j=1}^d (x_j-z_j)h_{j}\right) \]
	whence $z\in\cc{\BB}_d\bksl\supp(\frk{a}\cdot\frk{b})$.
	Therefore $\supp(\frk{a}\cdot\frk{b})\subset \supp(\frk{a})\cup\supp(\frk{b})$ and (iii) follows.
	
(iv) If $\frk{a},\frk{b}$ are weak-$*$ closed, then so is $\fa\cap \frk{b}$ and hence
	\[ \frk{a}\cdot\frk{b}\subset \cc{\frk{a}\cdot\frk{b}}^{w*}\subset \frk{a}\cap\frk{b}. \]
	It follows from (ii) and (iii) that
	\[ \supp(\frk{a}\cap\frk{b})\subset \supp(\cc{\frk{a}\cdot\frk{b}}^{w*})\subset \supp(\frk{a}\cdot\frk{b})=\supp(\frk{a}\cap\frk{b}). \qedhere \]
\end{proof}

The previous theorem shows that in many ways, the support of an ideal behaves like its zero set. There is a wrinkle in this analogy however: if $\fa,\frk{b}\subset \M(\H)$ are ideals, then
\[ \ZBd(\cc{\frk{a}+\frk{b}}^{w*})=\ZBd(\frk{a}+\frk{b})=\ZBd(\frk{a})\cap\ZBd(\frk{b}) \]
whereas above we only established one inclusion of the corresponding equality for supports. It turns out that the reverse inclusion does not typically hold, even in the classical situation of the Hardy space on the disc, as the next example shows. The argument requires the fact that the support of an inner function coincides with the support of the ideal it generates; this will be proved in Theorem \ref{T:suppEqSpec} below.

\begin{example}\label{E:suppsuminclusionw*}
Let $\H$ be the Hardy space on the disc. 
Let $\phi,\psi\in \M(\H)$ be two Blaschke products with disjoint zero sets but with overlapping supports. Let $\fa=\phi \M(\H)$ and $\frk{b}=\psi\M(\H)$. 
Applying Theorem \ref{T:LatIsomThm} and Beurling's theorem to the invariant subspace $[(\fa+\frk{b})\H]$, we find an inner function $\theta\in \M(\H)$ such that the weak-$*$ closure of $\fa+\frk{b}$ coincides with $\theta \M(\H)$.
In particular, we have that $\phi,\psi\in \theta\M(\H)$, and a moment's thought reveals that $\theta$ must therefore be constant because of the choice of $\phi$ and $\psi$.
Thus, $\fa+\frk{b}$ is weak-$*$ dense in $\M(\H)$ so that $\supp(\cc{\frk{a}+\frk{b}}^{w*})$ is empty, while $\supp(\fa)\cap \supp(\frk{b})$ is not. 
\qed
\end{example}

In fact, even the weaker inclusion $\supp (\fa)\cap \supp(\frk{b})\subset\supp(\fa+\frk{b})$ can fail for general ideals.

\begin{example}\label{E:suppsuminclusion}
Once again, we take $\H$ to be the Hardy space on the disc. It is well known that there exist two distinct characters $\chi_1,\chi_2$ on $\M(\H)$ such that $\chi_1(x)=\chi_2(x)=1$ \cite[page 161]{hoffman1988}. Let $\fa=\ker \chi_1$ and $\fb=\ker\chi_2$, which are two distinct codimension one ideals in $\M(\H)$. Thus, $\fa+\fb=\M(\H)$ so that $\supp(\fa+\fb)$ is empty. On the other hand, we note that $x-1\in \fa\cap \fb$, whence $\fa=\fa+(x-1)\M(\H)$ and $\fb=\fb+(x-1)\M(\H)$. In particular, we conclude that $1\notin \fa+(x-1)\M(\H)$ and $1\notin \fb+(x-1)\M(\H)$, whence $1\in \supp(\fa)\cap \supp(\fb)$.
	\qed
\end{example}

Our next goal is to show that the support of a weak-$*$ closed ideal $\fa$ coincides with both the Taylor and the weak-$*$ spectra of the functional model $\kZa$. 

First, we prove an elementary inclusion which holds more generally. The reader will note that the proof is almost identical to that of Theorem \ref{T:SpecInZ}, now that the support is defined appropriately. 

\begin{lemma}
	Let $\H$ be a regular unitarily invariant space with kernel $K$, and let $T=(T_1,\dots,T_d)$ be an absolutely continuous commuting $K$-contraction. % and put $\fa=\Ann(T)$.
	Then
	$ \wSpec(T)\subset \supp(\Ann (T)). $
	\label{L:wSpecInSupp}
\end{lemma}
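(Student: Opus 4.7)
The plan is to mimic the argument used in the second part of Theorem~\ref{T:SpecInZ}, with the definition of the support taking over the role previously played by the Gleason-type decomposition from Theorem~\ref{T:GleasonTrick}. The key observation is that the conclusion of Theorem~\ref{T:GleasonTrick}, which was the workhorse before, is now built directly into the hypothesis ``$z\notin\supp(\fa)$''.

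First I would fix a point $z=(z_1,\ldots,z_d)\in \bC^d$ with $z\notin \supp(\Ann(T))$ and unpack the definition of the support: there exist $f\in \Ann(T)$, a scalar $c\in \bC\setminus\{0\}$, and multipliers $f_1,\ldots,f_d\in \M(\H)$ such that
\[
f=c+\sum_{j=1}^d (x_j-z_j)f_j.
\]
Since $T$ is absolutely continuous, the $\M(\H)$-functional calculus $\widehat{\alpha}_T$ is available and is a unital algebra homomorphism. Applying $\widehat{\alpha}_T$ to the above identity and using that $f\in \Ann(T)$, I obtain
\[
0=f(T)=cI+\sum_{j=1}^d(T_j-z_jI)f_j(T),
\]
which rearranges to
\[
I=\sum_{j=1}^d (T_j-z_jI)\bigl(-c^{-1}f_j(T)\bigr).
\]

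The final step is to observe that $-c^{-1}f_j(T)$ lies in the weak-$*$ closed unital subalgebra of $\B(\fH)$ generated by $T_1,\ldots,T_d$ for each $j$. This follows because polynomials are weak-$*$ dense in $\M(\H)$ (as noted in the discussion preceding Theorem~\ref{T:TnMHFCAgree}) and $\widehat{\alpha}_T$ is weak-$*$ continuous, so $\widehat{\alpha}_T(\M(\H))$ is contained in that weak-$*$ closed algebra. Hence the displayed identity exhibits $I$ as an element of $\sum_{j=1}^d (T_j-z_jI)\A$, where $\A$ is the weak-$*$ closed unital algebra generated by $T$, so $z\notin \wSpec(T)$.

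I do not expect any real obstacle here: once the definition of $\supp(\fa)$ is written out, the argument is essentially a single application of the $\M(\H)$-functional calculus combined with weak-$*$ density of polynomials. The only mildly delicate point is making sure the operators $f_j(T)$ belong to the weak-$*$ closed algebra generated by $T$ itself (not merely the commutant or $\B(\fH)$), which is why absolute continuity is invoked rather than only the $\A(\H)$-functional calculus.
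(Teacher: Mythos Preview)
Your argument is correct and is essentially identical to the paper's own proof: both pick $z\notin\supp(\Ann(T))$, write $f=c+\sum_j(x_j-z_j)f_j$ with $f\in\Ann(T)$ and $c\neq 0$, apply the $\M(\H)$-functional calculus to get $I=\sum_j(T_j-z_jI)(-c^{-1}f_j(T))$, and conclude $z\notin\wSpec(T)$. You have merely spelled out in more detail why each $f_j(T)$ lies in the weak-$*$ closed algebra generated by $T$, a point the paper leaves implicit.
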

\begin{proof}
	Suppose $z\in\CC^d\bksl\supp(\Ann (T))$. There are $f\in\Ann (T)$,  $c\in\CC\bksl\{0\}$, and $f_1,\dots,f_d\in\mc{M}(\mc{H})$ such that $f=c+\sum_{j=1}^d(x_j-z_j)f_j$.
	Therefore
	\[ 0=f(T)=cI+\sum_{j=1}^d (T_j-z_jI)f_j(T). \]
	Since $c\neq 0$, we have $z\nin\wSpec(T)$.
\end{proof}

We can now prove one of the main results of the paper.

\begin{theorem}
Let $\H$ be a regular unitarily invariant space and let $\frk{a}$ be a weak-$*$ closed ideal of $\MH$.
	Then
	\[ \TSpec(\kZa)=\wSpec(\kZa)=\supp(\frk{a}). \]
	\label{T:suppEqSpec}
\end{theorem}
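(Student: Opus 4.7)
My plan is to establish the circular chain of inclusions
\[
 \supp(\fa)\subset\TSpec(\kZa)\subset\wSpec(\kZa)\subset\supp(\fa),
\]
which gives all the desired equalities. The middle inclusion $\TSpec(\kZa)\subset\wSpec(\kZa)$ is immediate from the definitions in Subsection \ref{SS:multispectra}. For the rightmost inclusion, I will first observe that the compression $\kZa$ of $M_x$ to the $\MH$-coinvariant subspace $\mc{H}(\fa)$ is absolutely continuous, with $\MH$-functional calculus $f\mapsto f(\kZa)$ inherited from $M_x$; by Theorem \ref{T:QCIsom} the kernel of this homomorphism is exactly $\fa$, so $\Ann(\kZa)=\fa$. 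Lemma \ref{L:wSpecInSupp} then yields $\wSpec(\kZa)\subset\supp(\Ann(\kZa))=\supp(\fa)$.

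The heart of the argument is the inclusion $\supp(\fa)\subset\TSpec(\kZa)$, which I plan to prove contrapositively. Fix $z\in\CC^d\setminus\TSpec(\kZa)$; the goal is to produce $f\in\fa$, $c\in\CC\setminus\{0\}$ and $f_1,\ldots,f_d\in\MH$ with $f=c+\sum_j(x_j-z_j)f_j$. Since $\TSpec(\kZa)$ is compact and does not contain $z$, a standard construction in several complex variables furnishes an open Stein neighbourhood $U\subset\CC^d$ of $\TSpec(\kZa)$ with $z\notin U$. On such a Stein manifold the coordinate functions $w_j-z_j$ have no common zero, so Cartan's Theorem B produces $g_1,\ldots,g_d\in\mc{O}(U)$ satisfying
\[
 \sum_{j=1}^d(w_j-z_j)\,g_j(w)=1 \quad\text{for all } w\in U.
\]

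Applying the Taylor functional calculus $\tau_{\kZa,U}:\mc{O}(U)\to\{\kZa_1,\ldots,\kZa_d\}''$ to this identity yields operators $S_j=\tau_{\kZa,U}(g_j)\in\{\kZa\}''$ such that $\sum_j(\kZa_j-z_jI)S_j=I$. The key algebraic observation is that the identity $\B_\fa=\B_\fa'$ obtained in the proof of Theorem \ref{T:QCIsom} forces $\{\kZa\}''\subset\B_\fa$; in particular, each $S_j$ has the form $-f_j(\kZa)$ for some $f_j\in\MH$. Transporting this operator identity back through the isomorphism $\MH/\fa\cong\B_\fa$ from Theorem \ref{T:QCIsom} produces $1+\sum_j(x_j-z_j)f_j\in\fa$, furnishing $f\in\fa$ of the required form with $c=1$ and hence $z\notin\supp(\fa)$.

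The main obstacle I anticipate is this last inclusion: the passage from the analytic spectral condition $z\notin\TSpec(\kZa)$ to a concrete Bezout identity in $\MH$ modulo $\fa$. The bridge combines two ingredients: Stein neighbourhoods together with Cartan's Theorem B convert the spectral hypothesis into a holomorphic identity on a neighbourhood of $\TSpec(\kZa)$, while the identity $\B_\fa=\B_\fa'$ from Theorem \ref{T:QCIsom} ensures that the operators produced by the Taylor functional calculus, which a priori live only in $\{\kZa\}''$, can be represented as multipliers modulo $\fa$.
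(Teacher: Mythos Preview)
Your overall strategy is sound, and the use of $\B_\fa=\B_\fa'$ to pull operators produced by the Taylor functional calculus back into $\B_\fa$ is precisely the key mechanism in the paper's proof. However, the assertion that ``a standard construction in several complex variables furnishes an open Stein neighbourhood $U\subset\CC^d$ of $\TSpec(\kZa)$ with $z\notin U$'' is not justified when $d\geq 2$. Compact subsets of $\CC^d$ need not admit small Stein neighbourhoods: by the Kontinuit\"atssatz, any pseudoconvex open set containing the sphere $\partial\BB_d$ must already contain all of $\ol{\BB}_d$. At this stage of the argument you only know that $\TSpec(\kZa)\subset\ol{\BB}_d$, so you cannot exclude the possibility that $\TSpec(\kZa)$ contains $\partial\BB_d$ while $z\in\BB_d$; in that configuration no Stein $U$ with your properties exists, and without it the holomorphic Bezout identity cannot be produced (indeed, on $\CC^d\setminus\{0\}$ the equation $\sum_j w_j g_j=1$ has no holomorphic solution, by Hartogs extension).

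The paper avoids this obstruction by splitting into two cases. For interior points $z\in\supp(\fa)\cap\BB_d=\ZBd(\fa)$ one argues directly: the kernel vector $k_z$ lies in $\H(\fa)$ and is a joint eigenvector of $(\kZa)^*$ with eigenvalue $\ol{z}$, so $z\in\TSpec(\kZa)$ via \eqref{Eq:incTayPoint} and \eqref{Eq:TayAdj}. For boundary points $z\in\partial\BB_d\setminus\TSpec(\kZa)$, an explicit neighbourhood \emph{is} available: the hyperplane complement $\{w:\ip{w,z}\neq 1\}$ contains $\ol{\BB}_d\setminus\{z\}\supset\TSpec(\kZa)$ (by the equality case of Cauchy--Schwarz) and carries the explicit Bezout solution $g_j=\ol{z}_j/\ip{x-z,z}$, making Cartan's Theorem~B unnecessary. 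From there the argument proceeds exactly as you describe; the paper phrases the lift via the commutant lifting theorem rather than $\B_\fa=\B_\fa'$, but these amount to the same thing. So your approach is correct for boundary points once the explicit $g_j$ are supplied, but interior points genuinely require the separate eigenvector argument.
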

\begin{proof}
    By Theorem \ref{T:KCntrcFacts}, we see that $\kZa$ is an absolutely continuous commuting $K$-contraction, whence it follows from Lemma \ref{L:wSpecInSupp} that
	\[ \TSpec(\kZa)\subset \wSpec(\kZa) \subset \supp(\frk{a}). \]
	It thus remains to prove that $\supp(\fa)\subset \TSpec(\kZa)$. 
	
	We first deal with the interior points. Let $z\in\supp(\frk{a})\cap\BB_d$.
	By Theorem \ref{T:suppprop}, we see that $z\in\ZBd(\frk{a})$ whence $k_z\in \mc{H}(\fa)$, and so
	\[ Z^{(\frk{a})*}_jk_z= M_{x_j}^*k_z = \cc{z}_jk_z \]
	for $j=1,\dots,d$. By virtue of \eqref{Eq:incTayPoint}, we find that $\cc{z}\in \TSpec(Z^{(\frk{a})*})$ and hence $z\in\TSpec(\kZa)$ by \eqref{Eq:TayAdj}.
	Therefore $\supp(\frk{a})\cap\BB_d= \TSpec(\kZa)\cap\BB_d$.
    
    We now move on to the boundary points. Let $z\in\pd\BB_d\bksl\TSpec(\kZa)$.
	The function $g=1/\ip{x-z,z}$ is analytic on some neighborhood $U$ of $\TSpec(\kZa)$ and satisfies
	$
    1=\sum_{j=1}^d (x_j-z_j)\ol{z}_j g 
	$
	on $U$. 	By applying the Taylor functional calculus, we obtain an operator $g(\kZa)\in \{\kZa\}''$ such that
	\[ I-\sum_{j=1}^d (\kZa_j-z_jI)\cc{z}_jg(\kZa)=0. \]
	Clearly, we have that $g(\kZa)\in \{\kZa\}'$ so we may apply the commutant lifting theorem \cite[Theorem 5.1]{BTV2001} to obtain $f\in \M(\H)$ with $g(\kZa)=f(Z^{(\frk{a})})$. Thus
	\[ 0= I-\sum_{j=1}^d (\kZa_j-z_jI)\cc{z}_jf(\kZa) \]
	and  by Theorem \ref{T:QCIsom} we infer that $1-\sum_{j=1}^d (x_j-z_j)\cc{z}_jf \in\frk{a}$. This says precisely that $z\nin\supp(\frk{a})$.
\end{proof}

\subsection{Supports and approximate zero sets}\label{SS:suppAZ}
In this subsection, we aim to give more concrete descriptions of the support of an ideal. For this purpose, we need the following notion. 
Given a family $\F$ of functions on $\BB_d$, we define its \textit{approximate zero set} to be the set $\AZ(\mc{F})$ consisting of all those $z\in\cc{\BB}_d$ for which there is a sequence $(w_n)_n$ in $\bB_d$ that converges to $z$ and such that $(f(w_n))_n$ converges to $0$ for every $f\in \F$.

\begin{theorem}\label{T:suppEqAZ}
    Let $\H$ be a regular unitarily invariant space with the Corona property and let $\frk{a}$ be a weak-$*$ closed ideal of $\mc{M}(\mc{H})$. Then, we have
	$ \supp(\frk{a})=\mathrm{AZ}(\frk{a}). $
	
\end{theorem}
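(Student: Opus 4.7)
The plan is to establish the two inclusions separately, with $\supp(\fa) \subset \AZ(\fa)$ being the substantive direction. For $\AZ(\fa) \subset \supp(\fa)$, suppose $z \in \AZ(\fa)$ with a witnessing sequence $w_n \to z$ in $\bB_d$ satisfying $f(w_n) \to 0$ for every $f \in \fa$. If $z \notin \supp(\fa)$, the definition of the support provides $g \in \fa$, a non-zero scalar $c$, and $f_1, \ldots, f_d \in \M(\H)$ with $g = c + \sum_j (x_j - z_j) f_j$. Evaluating at $w_n$ and using the pointwise bound $|f_j(w)| \leq \|f_j\|_{\M(\H)}$ on $\bB_d$, the sum on the right vanishes in the limit, forcing $g(w_n) \to c \neq 0$, a contradiction.

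For the reverse inclusion, fix $z \in \supp(\fa)$. If $z \in \bB_d$, then Theorem \ref{T:suppprop}(i) gives $z \in \ZBd(\fa)$, and the constant sequence $w_n = z$ works. The essential case is $z \in \partial \bB_d$, where the key intermediate step is the following finite-intersection property: for every finite subset $\{f_1, \ldots, f_k\} \subset \fa$ and every $\epsilon > 0$, there exists $w \in \bB_d$ with $\|w - z\| < \epsilon$ and $|f_i(w)| < \epsilon$ for $i = 1, \ldots, k$. I would prove this by contradiction. If no such $w$ exists, a case split according to whether $\|w-z\| < \epsilon$ or not yields $\sum_i |f_i(w)|^2 + \sum_j |w_j - z_j|^2 \geq \epsilon^2$ for every $w \in \bB_d$. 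The Corona property is equivalent to the Toeplitz-type corona theorem in $\M(\H)$ (if $1$ were not in the ideal generated by finitely many multipliers bounded below in modulus-squared, the ideal would be contained in the kernel of a character, contradicting weak-$*$ density of evaluations at points of $\bB_d$). Applied to the generators $\{f_1, \ldots, f_k, x_1 - z_1, \ldots, x_d - z_d\}$, this forces $1 \in \fa + \sum_j (x_j - z_j) \M(\H)$, contradicting $z \in \supp(\fa)$.

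To upgrade this finite-intersection property into a single witnessing sequence, I would exploit separability: since the polynomials are dense in $\H$, the Hilbert space $\H$ is separable, and so bounded subsets of $\M(\H) \subset \B(\H)$ are weak-$*$ metrizable. This produces a countable weak-$*$ dense subset $\{g_k\}$ of the unit ball of $\fa$, and a diagonal construction yields $w_n \in \bB_d$ with $\|w_n - z\| < 1/n$ and $|g_k(w_n)| < 1/n$ for all $k \leq n$; in particular $w_n \to z$ and $g_k(w_n) \to 0$ for each $k$. The main obstacle I expect is extending this pointwise vanishing from the countable subset $\{g_k\}$ to all of $\fa$, since weak-$*$ density gives only pointwise convergence of $g_k \to f$ at fixed points rather than uniformly along the approach sequence $(w_n)$. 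I plan to overcome this using the inner multiplier representation $[\fa \H] = \ran M_\Theta$ coming from Theorem \ref{T:LatIsomThm}: every $f \in \fa$ factors pointwise as $f(w) = \Theta(w) v_f(w)$ for a multiplier $v_f$ from $\H$ into $\H \otimes \fE$, yielding the uniform bound $|f(w)| \leq \|v_f\|_{\M} \|\Theta(w)\|$. Since the coordinate multipliers $\Theta e_k$ (for an orthonormal basis $\{e_k\}$ of $\fE$) all lie in $\fa$, controlling $\|\Theta(w_n)\|$ via the already-proved finite-intersection property applied to these coordinates should close the argument and yield $f(w_n) \to 0$ for every $f \in \fa$.
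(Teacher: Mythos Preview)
Your argument for $\AZ(\fa)\subset\supp(\fa)$ matches the paper's. For the reverse inclusion, however, the paper takes a much shorter route that bypasses the difficulty you flag. Using Theorem~\ref{T:suppEqSpec}, a point $z\in\supp(\fa)$ lies in $\wSpec(\kZa)$, so there is a character $\chi$ of $\B_\fa$ with $\chi(\kZa)=z$; via Theorem~\ref{T:QCIsom} this lifts to a character $\wtil\chi$ of $\M(\H)$ with $\wtil\chi(x)=z$ and $\fa\subset\ker\wtil\chi$. The Corona property then produces points $w_n\in\bB_d$ with $f(w_n)\to\wtil\chi(f)$ for \emph{every} $f\in\M(\H)$ simultaneously, so in particular $w_n\to z$ and $f(w_n)\to 0$ for all $f\in\fa$. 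Your finite-intersection step is essentially a disguised proof that such a character exists (the proper ideal $\fa+\sum_j(x_j-z_j)\M(\H)$ sits inside some maximal ideal), but you then discard the character and attempt to rebuild the simultaneous approximation by hand.

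That rebuilding step has a genuine gap. Your inner-multiplier factorization $f=\Theta v_f$ and the bound $|f(w)|\le\|v_f\|\,\|\Theta(w)\|$ are correct, but controlling $\|\Theta(w_n)\|$ via the coordinates $\Theta e_k$ fails when $\dim\fE=\infty$: since $\Theta(w)\in\B(\fE,\bC)$ is a functional, $\|\Theta(w_n)\|^2=\sum_k|\Theta(w_n)e_k|^2$, and a diagonal construction only forces each individual term to vanish, not the infinite sum. The tail $\sum_{k>n}|\Theta(w_n)e_k|^2$ is uncontrolled, so you cannot conclude $\|\Theta(w_n)\|\to 0$. The paper's character argument avoids this entirely, because approximating a single character in the Gelfand topology automatically handles all $f\in\fa$ at once rather than finitely many at a time.
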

\begin{proof}
	Let $z\nin\supp(\frk{a})$. There exist $f\in \fa$, $c\in\CC\bksl\{0\}$ and $f_1,\ldots,f_d\in \M(\H)$ such that
	$f=c+\sum_{j=1}^d (x_j-z_j) f_j$. This formula shows that if $(w_n)_n$ is a sequence in $\bB_d$ that converges to $z$, then $(f(w_n))_n$ converges to $c\neq 0$.  Thus $z\nin \AZ(\fa)$. We conclude that $\AZ(\frk{a})\subset\supp(\frk{a})$.

	Conversely, suppose $z\in\supp(\frk{a})$.
	It follows from Theorem \ref{T:suppEqSpec} that $z\in\wSpec(\kZa)$ and thus there is a character $\chi$ of $\B_\fa$ such that $\chi(\kZa)=z$.
	By Theorem \ref{T:QCIsom}, there is a character $\wtil{\chi}$ of $\mc{M}(\mc{H})$ such that $\wtil{\chi}(M_x)=z$ and $\fa\subset \ker \wtil{\chi}$. Now, by the Corona property there is a sequence $(w_n)_n$ in $\BB_d$ such that $(f(w_n))_n$ converges to $\wtil{\chi}(f)$ for every $f\in\mc{M}(\mc{H})$. In particular, we see that $(w_n)_n$ converges to $z$ while $(f(w_n))_n$ converges to $0$ for every $f\in \fa$. This says precisely that $z\in \AZ(\fa)$, so that $\supp(\fa)\subset \AZ(\fa)$.
\end{proof}

For some nicely behaved ideals, the approximate zero set in the previous description can be replaced by a genuine zero set.

\begin{corollary}\label{C:AHDense}
    Let $\H$ be a regular unitarily invariant space with the Corona property and let $\frk{a}$ be a weak-$*$ closed ideal of $\mc{M}(\mc{H})$.
	Then
	\[ \supp(\frk{a})\subset\ZcBd(\frk{a}\cap\AH). \]
	If, in addition, $\frk{a}\cap\AH$ is assumed to be weak-$*$ dense in $\frk{a}$ and $\ZcBd(\frk{a}\cap\AH)=\cc{\ZBd(\frk{a}\cap\AH)}$, then
	\[ \supp(\frk{a})=\ZcBd(\frk{a}\cap\AH). \]
\end{corollary}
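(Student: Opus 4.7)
The plan is to prove the two inclusions separately.

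For the first inclusion, I would invoke Theorem \ref{T:suppEqAZ} to identify $\supp(\frk{a})$ with the approximate zero set $\AZ(\frk{a})$, and then exploit the fact that every multiplier in $\AH$ extends to a continuous function on $\ol{\bB}_d$. Given $z \in \AZ(\frk{a})$, pick a sequence $(w_n)_n$ in $\bB_d$ with $w_n \to z$ and $f(w_n)\to 0$ for every $f \in \frk{a}$. For $f \in \frk{a} \cap \AH$, continuity on $\ol{\bB}_d$ then forces $f(z) = 0$, whence $z \in \ZcBd(\frk{a}\cap\AH)$.

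For the reverse inclusion I would proceed in two steps. First, I would observe that $\supp(\frk{a})$ is closed, since by Theorem \ref{T:suppEqSpec} it coincides with the Taylor spectrum of $\kZa$. Combined with the hypothesis $\ZcBd(\frk{a}\cap\AH) = \ol{\ZBd(\frk{a}\cap\AH)}$, this reduces the problem to showing $\ZBd(\frk{a}\cap\AH) \subset \supp(\frk{a})$. By Theorem \ref{T:suppprop}(i), this in turn is equivalent to showing that every $f \in \frk{a}$ vanishes on $\ZBd(\frk{a}\cap\AH)$.

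The second step then uses weak-$*$ density together with weak-$*$ continuity of point evaluation at interior points. Fix $z \in \ZBd(\frk{a}\cap\AH)$ and $f \in \frk{a}$, and approximate $f$ in the weak-$*$ topology by a net $(f_\lambda) \subset \frk{a}\cap\AH$. Since $M_g^* k_z = \ol{g(z)} k_z$ for every $g \in \MH$ and the weak-$*$ topology on $\MH$ is inherited from $\B(\H)$, we have $\ol{f_\lambda(z)}\|k_z\|^2 = \ip{M_{f_\lambda}^* k_z, k_z} \to \ip{M_f^* k_z, k_z} = \ol{f(z)}\|k_z\|^2$. Since every $f_\lambda$ vanishes at $z$, so does $f$, which closes the argument.

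I do not anticipate any serious obstacle: the essential content has already been packaged in Theorems \ref{T:suppEqAZ}, \ref{T:suppEqSpec}, and \ref{T:suppprop}, and the corollary amounts to combining (i) continuity of functions in $\AH$ on the closed ball, (ii) closedness of $\supp(\frk{a})$ as a spectrum, and (iii) weak-$*$ continuity of point evaluation at points of $\bB_d$. The only mildly delicate point is recognizing that the topological hypothesis $\ZcBd(\frk{a}\cap\AH) = \ol{\ZBd(\frk{a}\cap\AH)}$ is precisely what is needed to reduce the problem to the interior, a reduction whose legitimacy rests entirely on the closedness of $\supp(\frk{a})$.
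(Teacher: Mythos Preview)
Your proposal is correct and follows essentially the same route as the paper: both use Theorem \ref{T:suppEqAZ} plus continuity of $\AH$-functions on $\ol{\bB}_d$ for the first inclusion, and for the reverse inclusion both combine closedness of $\supp(\frk{a})$ (via Theorem \ref{T:suppEqSpec}), the identity $\supp(\frk{a})\cap\BB_d=\ZBd(\frk{a})$ from Theorem \ref{T:suppprop}(i), and weak-$*$ continuity of interior point evaluations to deduce $\ZBd(\frk{a}\cap\AH)=\ZBd(\frk{a})$.
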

\begin{proof}
	Let $z\in\supp(\frk{a})$.
	By Theorem \ref{T:suppEqAZ}, there is a sequence $(w_n)_n$ in $\BB_d$ converging to $z$ such that $(f(w_n))_n$ converges to $0$ for every $f\in\frk{a}$. If $g\in\frk{a}\cap\AH$, then by continuity we find $g(z)=0$. Hence we have $z\in\ZcBd(\frk{a}\cap\AH)$.

	For the remainder of the proof we suppose that $\frk{a}\cap \AH$ is weak$*$ dense in $\frk{a}$ and that 
	\[\ZcBd(\frk{a}\cap\AH)=\cc{\ZBd(\frk{a}\cap\AH )}.
	 \]
    Using the appropriate kernel vector, we see that evaluation at a point in the interior of the ball is a weak-$*$ continuous functional on $\M(\H)$, and thus 
    \[
     \ZBd(\frk{a})=\ZBd(\frk{a}\cap\AH).
     \]
	Note now that $\ZBd(\frk{a})\subset\supp(\frk{a})$ by Theorem \ref{T:suppprop}, and that $\supp(\frk{a})$ is closed as a consequence of Theorem \ref{T:suppEqSpec}.
	Thus
	\[ \ZcBd(\frk{a}\cap\AH)=\cc{\ZBd(\frk{a}\cap\AH)}\subset\supp(\frk{a})\subset \ZcBd(\frk{a}\cap\AH ) \]
	where the last inclusion follows from the previous paragraph.
\end{proof}

Homogeneous ideals fit into the framework of the previous result, as we illustrate next.

\begin{example}\label{E:HomogPoly}
Let $\H$ be a regular unitarily invariant space with the Corona property. As explained in the discussion following Theorem \ref{T:TandAHFCAgree}, the polynomials are weak-$*$ dense in $\M(\H)$.	

Let $p_1,\dots,p_m\in\CC[x_1,\dots,x_d]$ be homogeneous polynomials and let $\frk{a}$ be the weak-$*$ closed ideal of $\mc{M}(\mc{H})$ that they generate. 	
Every element of $\frk{a}$ is a weak-$*$ limit of elements from the ideal generated by $p_1,\ldots,p_m$ inside of $\CC[x_1,\dots,x_d]$, and thus 
\[
\ZBd(\fa)=\ZBd(\frk{a}\cap\AH)=\ZBd(p_1,\ldots,p_m).
\]
	Since $p_1,\ldots,p_m$ are homogeneous, we have $\ZcBd(p_1,\ldots,p_m)=\cc{\ZBd(p_1,\dots,p_m)}$, and thus
	\[ \ZcBd(p_1,\ldots,p_m)=\cc{\ZBd(p_1,\dots,p_m)}=\cc{\ZBd(\frk{a}\cap\AH)}. \]
	By continuity, we find
	\[ \cc{\ZBd(\frk{a}\cap\AH)}\subset \ZcBd(\frk{a}\cap\AH)\subset\ZcBd(p_1,\ldots,p_m) \]
	and so
	\[ \ZcBd(p_1,\ldots,p_m)=\ZcBd(\frk{a}\cap\AH)=\cc{\ZBd(\frk{a}\cap\AH)}. \]
	Thus
	\[ \supp(\frk{a})=\ZcBd(\frk{a}\cap\AH)=\ZcBd(p_1,\dots,p_m).  \]
	by Corollary \ref{C:AHDense}.
	\qed
\end{example}

    As explained in the introduction (see also \cite[Definition 2.2.15]{bercovici1988}), in the case of the Hardy space on the disc, the boundary portion of the support of an inner function is associated with a certain loss of regularity. A similar phenomenon occurs more generally, as we show next.

\begin{corollary}\label{C:Lipschitz}
 	Let $\H$ be a regular unitarily invariant space with the Corona property and let $\frk{a}$ be a weak-$*$ closed ideal of $\mc{M}(\mc{H})$. Let $z\in \bC^d$.  Then $z\nin\supp(\frk{a})$ if and only if there is $f\in\frk{a}$ which is (or extends to be)  Lipschitz and non-zero at $z$.
\end{corollary}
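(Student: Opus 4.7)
The plan is to deduce both directions directly from two results already established in the paper: Theorem~\ref{T:DfLTDx} for the forward implication, and Theorem~\ref{T:suppEqAZ} for the reverse implication (which is where the Corona hypothesis enters).

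For the forward direction, I would assume $z \notin \supp(\fa)$ and apply Theorem~\ref{T:DfLTDx} to obtain $f \in \fa$, $c \in \CC \setminus \{0\}$, and $L > 0$ satisfying the pointwise estimate
\[
  |f(w) - c| \leq L \|w - z\|, \qquad w \in \BB_d.
\]
This estimate forces $f$ to extend continuously to $z$ via $f(z) := c$, and the extended function is Lipschitz at $z$ with constant $L$. Since $c \neq 0$, this extension is nonzero at $z$, which is exactly what is required.

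For the reverse direction, suppose some $f \in \fa$ extends to be Lipschitz and nonzero at $z$. In particular, the extension is continuous at $z$ with $f(z) \neq 0$, so for every sequence $(w_n)$ in $\BB_d$ converging to $z$ we have $f(w_n) \to f(z) \neq 0$. This single $f$ therefore certifies that $z \notin \AZ(\fa)$, since for the opposite conclusion one would need a sequence along which \emph{every} element of $\fa$ tends to zero. Invoking Theorem~\ref{T:suppEqAZ}, which uses the Corona property to establish $\supp(\fa) = \AZ(\fa)$, we conclude $z \notin \supp(\fa)$.

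I do not anticipate any serious obstacle: the corollary is essentially a repackaging of the Lipschitz estimate in Theorem~\ref{T:DfLTDx} together with the approximate-zero-set description from Theorem~\ref{T:suppEqAZ}. The only mild delicacy is interpreting ``Lipschitz at $z$'' when $z \in \partial \BB_d$, but this is precisely the role of the parenthetical ``or extends to be'' in the statement: we view $f$ as a function on $\BB_d \cup \{z\}$ and interpret the Lipschitz condition for $w \in \BB_d$ approaching $z$, which is exactly what Theorem~\ref{T:DfLTDx} delivers on one side and what continuity at $z$ consumes on the other.
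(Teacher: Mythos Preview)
Your proposal is correct and follows the same approach as the paper: the forward direction via Theorem~\ref{T:DfLTDx} and the reverse direction via Theorem~\ref{T:suppEqAZ}. The only cosmetic difference is that in the reverse direction the paper first disposes of the trivial case $z\notin\cc{\BB}_d$ by citing Theorem~\ref{T:suppprop} directly, whereas your argument handles it implicitly through the containment $\AZ(\fa)\subset\cc{\BB}_d$; either route is fine.
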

\begin{proof}
Assume first that $z\nin\supp(\frk{a})$. It follows from Theorem \ref{T:DfLTDx} that there is a function $f\in \fa$ which is (or extends to be)  Lipschitz and non-zero at $z$. Conversely, assume that $f\in \fa$
	is (or extends to be)  Lipschitz and non-zero at $z$. If $z\notin \ol{\bB}_d$, then $z\notin \supp(\fa)$ by Theorem \ref{T:suppprop}. Assume therefore that $z\in \ol{\bB}_d$. Let $(w_n)_n$ be a sequence in $\bB_d$ converging to $z$. We see that $(f(w_n))_n$ converges to $f(z)\neq 0$. Since $f\in \fa$, this means that $z\notin \AZ(\fa)$.
	Invoking Theorem \ref{T:suppEqAZ} we see that $z\nin\supp(\frk{a})$.
\end{proof}

To close this section, we have the following result that provides a description of the Banach algebraic spectrum of functions of $\kZa$ in $\B_{\frk{a}}$ as a common approximate zero set.

\begin{theorem}
	Suppose $\H$ is a regular unitarily invariant space with the Corona property, and fix a weak-$*$ closed ideal $\frk{a}$ of $\M(\H)$.
	Given $u_1,\dots,u_s\in\mc{M}(\mc{H})$ and $z\in\CC^s$, the following statements are equivalent:
	\begin{enumerate}
		\item[{\rm{(i)}}] $z\in\sigma_{\B_{\frk{a}}}(u_1(\kZa),\dots,u_s(\kZa))$,
		\item[{\rm{(ii)}}] 

			\[ \sup_{f\in\frk{a}}\inf_{w\in\BB_d} \left\{|f(w)|+\sum_{j=1}^s |u_j(w)-z_j|\right\} = 0.\]
 	%	\item[{\rm{(iii)}}] \[\inf_{z\in\BB_d}\left\{ \|\Theta(z)\|+ \sum_{j=1}^s|u_j(z)-z_j|\right\}=0.\]
	\end{enumerate}
	\label{L:SpecAndTheta}
\end{theorem}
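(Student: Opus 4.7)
The plan is to reformulate both conditions in terms of characters of $\M(\H)$. By Theorem \ref{T:QCIsom}, $\B_\fa$ is unital completely isometrically isomorphic to $\M(\H)/\fa$, with $u_j(\kZa)$ corresponding to $u_j+\fa$. Since $\B_\fa$ is commutative, standard Gelfand-type spectral theory tells us that $z\in\sigma_{\B_\fa}(u_1(\kZa),\ldots,u_s(\kZa))$ if and only if $1\notin \sum_{j=1}^s (u_j-z_j)\M(\H)+\fa$, which in turn holds precisely when there exists a character $\widetilde\chi$ of $\M(\H)$ annihilating $\fa$ and satisfying $\widetilde\chi(u_j)=z_j$ for every $j$. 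This character reformulation will serve as the common hinge for both implications.

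To prove (i)$\Rightarrow$(ii), I would invoke the Corona property to approximate such a $\widetilde\chi$: there is a net of evaluation characters at points of $\BB_d$ converging to $\widetilde\chi$ in the weak-$*$ topology. For each fixed $f\in\fa$, pointwise convergence on the finite set $\{f,u_1,\ldots,u_s\}\subset\M(\H)$ yields a sequence $(w_n)_n\subset\BB_d$ with $f(w_n)\to\widetilde\chi(f)=0$ and $u_j(w_n)\to\widetilde\chi(u_j)=z_j$ for every $j$. Hence $\inf_{w\in\BB_d}\bigl\{|f(w)|+\sum_{j=1}^s|u_j(w)-z_j|\bigr\}=0$ for every $f\in\fa$, which is exactly (ii).

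For the converse, I would prove the contrapositive. If $z\notin\sigma_{\B_\fa}(u_1(\kZa),\ldots,u_s(\kZa))$, then there exist $f\in\fa$ and $g_1,\ldots,g_s\in\M(\H)$ such that $1=f+\sum_{j=1}^s(u_j-z_j)g_j$. Because the multiplier norm dominates the supremum norm on $\BB_d$, evaluating at an arbitrary $w\in\BB_d$ and setting $C=\max_j\|g_j\|_{\M(\H)}$ gives
\[ 1\le |f(w)|+C\sum_{j=1}^s|u_j(w)-z_j|, \]
whence $|f(w)|+\sum_j|u_j(w)-z_j|\ge 1/\max(1,C)$ uniformly in $w$. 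This produces a specific $f\in\fa$ for which the infimum in (ii) is strictly positive, so the supremum is strictly positive, contradicting (ii).

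The only real subtlety is the character reformulation in the first paragraph, where one must verify that the commutativity of $\B_\fa$ together with Theorem \ref{T:QCIsom} allows the Banach-algebraic spectrum to be read off characters of $\M(\H)$ that annihilate $\fa$; everything else is a direct application of the Corona property together with the contractivity of point evaluations on $\M(\H)$.
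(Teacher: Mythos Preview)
Your proof is correct. The direction (ii)$\Rightarrow$(i) (argued contrapositively) is identical to the paper's: from $z\notin\sigma_{\B_\fa}$ you extract $1=f+\sum_j(u_j-z_j)g_j$ with $f\in\fa$ via Theorem~\ref{T:QCIsom}, then bound pointwise.

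The direction (i)$\Rightarrow$(ii) is handled differently. The paper argues contrapositively and uses the Corona property in its \emph{Bezout-solving} form: if $\inf_{w}\bigl(|f(w)|+\sum_j|u_j(w)-z_j|\bigr)>0$ for some $f\in\fa$, then $1=g_0f+\sum_j(u_j-z_j)g_j$ for suitable multipliers, and applying the $\kZa$-calculus (where $f(\kZa)=0$) gives $z\notin\sigma_{\B_\fa}$. You instead argue directly, passing through a character reformulation of $\sigma_{\B_\fa}$ and using the Corona property in its \emph{density-of-evaluations} form to approximate the character $\widetilde\chi$ by point evaluations. The two forms of the Corona property are equivalent (each implies the other by the maximal-ideal argument you implicitly invoke), so the difference is one of packaging rather than substance. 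Your route has the advantage of making the role of characters explicit and avoids the need to state the Bezout form separately; the paper's route is slightly shorter since it bypasses the character reformulation entirely.
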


\begin{proof}
(ii) $\Rightarrow$ (i): Assume that $z\nin\sigma_{\mc{B}_{\frk{a}}}(u_1(\kZa),\dots,u_s(\kZa))$. By definition, this means that there exist $f_1,\dots,f_s\in\mc{M}(\mc{H})$ such that
	\[ \sum_{j=1}^s (u_j(\kZa)-z_j I)f_j(\kZa)=I. \]
	By Theorem \ref{T:QCIsom}, we see that the function $f=1-\sum_{j=1}^s (u_j-z_j)f_j$ lies in $\fa$.
	For any $w\in\BB_d$, we observe that
	\begin{align*}
	 1&\leq |f(w)|+\sum_{j=1}^s \|f_j\||u_j(w)-z_j|
	\end{align*}
	whence
	\[
	 \inf_{w\in\BB_d} \left\{|f(w)|+\sum_{j=1}^s |u_j(w)-z_j|\right\}>0.
	\]

	(i) $\Rightarrow$ (ii): Suppose that
	\[ \inf_{w\in\BB_d}\left\{ |f(w)|+\sum_{k=1}^s|u_k(w)-z_k| \right\}>0 \]
	for some $f\in \fa$. Because $\H$ has the Corona property, there are $g_0,g_1,\ldots,g_s\in\MH$ such that
	\[ 1=g_0f+(u_1-z_1)g_1+\cdots+(u_s-z_s)g_s. \]
	Since $f\in \fa$, we have $f(\kZa)=0$ and thus \[I=\sum_{j=1}^s(u_j(\kZa)-z_j)g_j(\kZa).\] 
	This equality implies that $z\nin\sigma_{\mc{B}_{\frk{a}}}(u_1(\kZa),\ldots,u_s(\kZa))$.
\end{proof}

The condition that $\H$ has the Corona property is necessary in the previous result. Indeed, the equivalence of (i) and (ii) for the ideal $\fa=\{0\}$ says precisely that $\H$ has the Corona property. 

The preceding theorem allows for the following, somewhat surprising, characterization of the support.
In particular, it says that the approximate zero set of a weak-$*$ closed ideal $\frk{a}$ consists of all those approximate zeroes which are common to the elements of $\frk{a}$.
%We now arrive at a highlight of this subsection, which shows that the support of a weak-$*$ closed ideal can be described as the approximate zero set of its inner generator.

\begin{corollary} \label{T:suppAZTheta}
Let $\H$ be a regular unitarily invariant space with the Corona property and let $\frk{a}$ be a weak-$*$ closed ideal of $\mc{M}(\mc{H})$.
Then,% there exist a separable Hilbert space $\frk{E}$ and an inner multiplier $\Theta\in\mc{M}(\mc{H}\otimes\frk{E},\mc{H})$ such that $\frk{a}=\Theta \mc{M}(,\mc{H}\otimes\frk{E},\H)$ and
	\[ \supp(\frk{a})=\bigcap_{f\in\frk{a}}\AZ(f). \]
\end{corollary}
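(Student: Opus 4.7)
\medskip

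\noindent\textbf{Proof plan.} The plan is to derive this as an immediate corollary of Theorem \ref{L:SpecAndTheta} combined with Theorem \ref{T:suppEqSpec}. Specialize Theorem \ref{L:SpecAndTheta} to the case $s=d$ and $u_j = x_j$, so that $u_j(\kZa) = \kZa_j$. This gives: for any $z \in \CC^d$,
\[
z \in \sigma_{\B_\fa}(\kZa_1,\ldots,\kZa_d) \iff \sup_{f \in \fa}\inf_{w\in \BB_d}\Bigl\{|f(w)| + \sum_{j=1}^d |w_j - z_j|\Bigr\} = 0.
\]
By Theorem \ref{T:suppEqSpec}, the left-hand set equals $\supp(\fa)$.

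The main step is to reinterpret the right-hand condition in terms of approximate zero sets. The sup-over-$\fa$ of non-negative quantities is zero precisely when each term is zero, i.e., when for every $f \in \fa$,
\[
\inf_{w \in \BB_d}\Bigl\{|f(w)| + \sum_{j=1}^d |w_j - z_j|\Bigr\} = 0.
\]
For a fixed $f$, this infimum vanishes iff there exists a sequence $(w_n)_n$ in $\BB_d$ with $w_n \to z$ and $f(w_n) \to 0$, which is the definition of $z \in \AZ(f)$. (If $z \notin \ol{\BB}_d$, the infimum is bounded below by $\mathrm{dist}(z,\ol{\BB}_d) > 0$, so both sides fail consistently; and by Theorem \ref{T:suppprop} the support is contained in $\ol{\BB}_d$ anyway.) Combining these equivalences yields
\[
z \in \supp(\fa) \iff z \in \AZ(f) \text{ for every } f \in \fa,
\]
which is precisely the desired identity.

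I do not anticipate any substantive obstacle: the bulk of the work has been done in Theorem \ref{L:SpecAndTheta}, and the remaining task is simply to recognize the sup-inf expression as the defining condition for membership in $\bigcap_{f \in \fa}\AZ(f)$. The only point worth writing carefully is the translation between the infimum being zero and the existence of an approximating sequence, which is a routine $\epsilon$-argument.
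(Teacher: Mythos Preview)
Your proposal is correct and follows essentially the same route as the paper: specialize Theorem~\ref{L:SpecAndTheta} to $s=d$, $u_j=x_j$, identify $\sigma_{\B_\fa}(\kZa)$ with $\supp(\fa)$ via Theorem~\ref{T:suppEqSpec}, and unpack the sup--inf condition as membership in $\bigcap_{f\in\fa}\AZ(f)$. The paper handles the inclusion $\supp(\fa)\subset\bigcap_{f\in\fa}\AZ(f)$ by the trivial observation $\AZ(\fa)\subset\bigcap_{f\in\fa}\AZ(f)$ together with Theorem~\ref{T:suppEqAZ}, whereas you get both directions directly from the biconditional; this is a cosmetic difference only.
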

\begin{proof}

	Plainly $\AZ(\frk{a})\subset\bigcap_{f\in\frk{a}}\AZ(f)$. To see the reverse inclusion let  $z\in\bigcap_{f\in\frk{a}}\AZ(f)$. Taking $s=d$, $u_1=x_1,\ldots,u_d=x_d$ in Lemma \ref{L:SpecAndTheta} and invoking Theorem \ref{T:suppEqSpec}, it follows that $z\in\sigma_{\mc{B}_{\frk{a}}}(\kZa)=\supp(\fa)$. 	
\end{proof}

By analogy with the classical univariate case in the Hardy space, the reader may wonder  whether the support can be characterized in terms of vanishing information of a single operator-valued inner function.
Indeed, one can show that, given a weak-$*$ ideal $\fa$ of $\M(\H)$, there exist a separable Hilbert space $\frk{E}$ and an inner multiplier $\Theta\in\M(\H\otimes\frk{E},\H)$ such that $\frk{a}=\Theta\M(\H,\H\otimes\frk{E})$.
How does $\supp(\frk{a})$ relate to the properties of $\Theta$? 
One can readily show that $\supp(\frk{a})\supset \AZ(\|\Theta(\cdot)\|)$ %this is the old (iii) ==> (ii)
and that \[\AZ(\{\Theta(\cdot)v:v\in\frk{E}\})\supset \supp(\frk{a}),\] % this follows from the fact that the \theta_j's are in \frk{a}
but the equality of $\AZ(\{\Theta(\cdot)v:v\in\frk{E}\})$ with $\AZ(\|\Theta(\cdot)\|)$ is not clear. We will not pursue these ideas further in this paper.

\section{Localizable points in the support}\label{S:Loc} %%%%%%%%%%%%%%%%%%%%%%%%%%%%%%%%%%%%%%%%%%%%%%

Let $\H$ be a regular unitarily invariant space with kernel $K$. Recall that an absolutely continuous commuting $K$-contraction  $T=(T_1,\dots,T_d)$ satisfies 
$ \wSpec(T)\subset \supp(\Ann( T))$, by Lemma \ref{L:wSpecInSupp}. In this section, we explore the reverse inclusion with the help of the following definition, wherein $B(z,r)$ denotes the open ball of radius $r$ centred at some point $z\in\CC^d$.

	Let $\frk{a}$ be a weak-$*$ closed ideal of $\MH$.
	Given $z\in\CC^d$, we say that $\frk{a}$ is \textit{localizable} at $z$ if, for each $r>0$, there exist weak-$*$ closed ideals $\frk{b},\frk{c}\subset\MH$ with the following properties:
	\begin{enumerate}[(i)]
		\item $\frk{c} \not\subset \frk{a}$,
		\item $\frk{b}\frk{c}\subset\frk{a}$,
		\item $\supp(\frk{a})\cap\supp(\frk{b})\subset B(z,r)$.
	\end{enumerate}
	The set of all $z\in\CC^d$ at which $\frk{a}$ is localizable is denoted by $\mathrm{Loc}(\frk{a})$. As we show next, this set is always a closed subset of the closed unit ball.

\begin{lemma}
    Let $\H$ be a regular unitarily invariant space and let $\frk{a}$ be a weak-$*$ closed ideal of $\MH$.
	Then $\mathrm{Loc}(\frk{a})$ is a closed subset of $\cc{\BB}_d$.
	\label{L:LocInClsdBall}
\end{lemma}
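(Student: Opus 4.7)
My plan is to address the two assertions in the lemma separately: that $\Loc(\frk{a})$ is closed, and that it is contained in $\cc{\BB}_d$.

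\textbf{Closedness} will follow from a short sequential argument exploiting the fact that localizability involves an arbitrary ball radius. If $(z_n)_n \subset \Loc(\frk{a})$ converges to some $z$ and $r > 0$ is given, I will pick $n$ with $\|z_n - z\| < r/2$ and invoke localizability of $\frk{a}$ at $z_n$ with parameter $r/2$. The resulting ideals $\frk{b}, \frk{c}$ automatically satisfy conditions (i) and (ii), and they satisfy (iii) at $z$ with radius $r$ because $B(z_n, r/2) \subset B(z, r)$ by the triangle inequality.

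For the \textbf{inclusion} $\Loc(\frk{a}) \subset \cc{\BB}_d$, the case $\frk{a} = \MH$ is vacuous since condition (i) is then unsatisfiable. Otherwise, let $z \notin \cc{\BB}_d$ and choose $r > 0$ with $B(z, r) \cap \cc{\BB}_d = \emptyset$. For any weak-$*$ closed ideals $\frk{b}, \frk{c}$ purporting to witness localizability at this $z$ and $r$, the inclusions $\supp(\frk{a}), \supp(\frk{b}) \subset \cc{\BB}_d$ from Theorem \ref{T:suppprop}(i) combined with (iii) will force
\[ \supp(\frk{a}) \cap \supp(\frk{b}) = \emptyset. \]
Parts (ii) and (iii) of Theorem \ref{T:suppprop} then yield $\supp(\cc{\frk{a}+\frk{b}}^{w*}) \subset \supp(\frk{a}) \cap \supp(\frk{b}) = \emptyset$. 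Via Theorem \ref{T:suppEqSpec}, the Taylor spectrum of the functional model for $\cc{\frk{a} + \frk{b}}^{w*}$ is thus empty, forcing the corresponding model space to be trivial, and Theorem \ref{T:LatIsomThm} then forces $\cc{\frk{a} + \frk{b}}^{w*} = \MH$.

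To \textbf{derive the contradiction} with (i), I will fix $\phi \in \frk{c}$ and use (ii) together with commutativity of $\MH$ to obtain $\phi(\frk{a} + \frk{b}) \subset \frk{a}$. Since left multiplication by $M_\phi$ is weak-$*$ continuous on $\MH$ and $\frk{a}$ is weak-$*$ closed, this will extend to $\phi \cdot \cc{\frk{a} + \frk{b}}^{w*} \subset \frk{a}$. Specializing at $1 \in \MH = \cc{\frk{a} + \frk{b}}^{w*}$ yields $\phi \in \frk{a}$, so $\frk{c} \subset \frk{a}$, contradicting (i). The \textbf{main obstacle} is the implication ``empty support forces the full algebra'', which is not obvious in view of the cautionary Examples \ref{E:suppsuminclusionw*} and \ref{E:suppsuminclusion} but becomes accessible by combining Theorems \ref{T:suppEqSpec} and \ref{T:LatIsomThm}.
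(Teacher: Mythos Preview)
Your proposal is correct and follows essentially the same approach as the paper's proof: both parts (closedness via the $r/2$ triangle-inequality trick, and the inclusion in $\cc{\BB}_d$ via emptiness of $\supp(\cc{\frk{a}+\frk{b}}^{w*})$, then Theorem~\ref{T:suppEqSpec} and Theorem~\ref{T:LatIsomThm} to conclude $\cc{\frk{a}+\frk{b}}^{w*}=\MH$, then $\frk{c}\subset\frk{a}$ by weak-$*$ continuity of multiplication) match the paper's argument almost verbatim, with only cosmetic differences in ordering and phrasing.
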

\begin{proof}
	Suppose $z\in\mathrm{Loc}(\frk{a})\bksl\cc{\BB}_d$ and let $r=(\|z\|-1)/2>0$. There exist weak-$*$ closed ideals $\frk{b},\frk{c}\subset\MH$ such that $\frk{c} \not\subset \frk{a}$, $\frk{b}\frk{c}\subset\frk{a}$, and $\supp(\frk{a})\cap\supp(\frk{b})\subset B(z,r)$. Recall now from Theorem \ref{T:suppprop} that the support of an ideal is always contained in $\cc{\BB}_d$. On the other hand, by choice of $r$ we see that $B(z,r)$ is disjoint from $\cc{\BB}_d$, which forces $\supp(\frk{a})\cap\supp(\frk{b})$ to be empty. Therefore, if we let $\frk{w}$ be the weak-$*$ closure of $\fa+\fb$, then we find that $\supp(\fw)$ is also empty by virtue of Theorem \ref{T:suppprop} again. In particular, Theorem \ref{T:suppEqSpec} implies that $\TSpec(Z^{(\fw)})$ is empty, and thus $\H(\fw)$ is the zero space (see Subsection \ref{SS:multispectra}). Invoking Theorem \ref{T:LatIsomThm} we find that $\fw=\M(\H)$. Using that $\fb \fc\subset \fa$, we find $(\fa+\fb)\fc\subset \fa$ and since $\fa$ is weak-$*$ closed we also get $\fc=\fw\fc\subset \fa$, contrary to the choice of $\fc$.
	We conclude that $\mathrm{Loc}(\frk{a})\subset\cc{\BB}_d$.

	It remains to show that $\mathrm{Loc}(\frk{a})$ is closed.
	Let $(z_n)_n$ be a sequence in $\mathrm{Loc}(\frk{a})$ that converges to some $z\in\cc{\BB}_d$.
	Let $r>0$ and suppose $n$ is some positive integer large enough that $\|z_n-z\|<r/2$.
	Since $z_n\in \Loc(\fa)$, there exist weak-$*$ closed ideals $\frk{b},\frk{c}\subset\MH$ such that $\frk{c} \not\subset\frk{a}$, $\frk{b}\frk{c}\subset\frk{a}$, and $\supp(\frk{a})\cap\supp(\frk{b})\subset B(z_n,r/2)$.
	Since $B(z_n,r/2)\subset B(z,r)$, we have $z\in\mathrm{Loc}(\frk{a})$.
\end{proof}

\subsection{Localizable points and spectra}

We can now prove another one of the main results of the paper, the proof of which is inspired by that of \cite[Theorem 2.4.11]{bercovici1988}. 

\begin{theorem}
    Let $\H$ be a regular unitarily invariant space with kernel $K$.
	Let $T=(T_1,\dots,T_d)$ be an absolutely continuous commuting $K$-contraction.
	Then $\mathrm{Loc}(\Ann (T))\subset\sigma'(T)$.
	\label{T:LocInSigPrime}
\end{theorem}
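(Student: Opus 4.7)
My plan is to prove the inclusion by showing that whenever $z\in\Loc(\fa)$, the commutant spectrum $\sigma'(T)$ meets every open ball $B(z,r)$. Since $\sigma'(T)$ is closed---by a standard Neumann-series perturbation argument using that the Neumann inverse of $I+A$ stays in $\{T_1,\ldots,T_d\}'$ whenever $A$ does---this suffices to conclude $z\in\sigma'(T)$. The spectral point in $B(z,r)$ will come from restricting $T$ to a carefully chosen invariant subspace whose annihilator contains both $\fa$ and the ideal $\frk{b}$ provided by the localizability data.

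So fix $r>0$. The hypothesis $z\in\Loc(\fa)$ yields weak-$*$ closed ideals $\frk{b},\frk{c}\subset\M(\H)$ with $\frk{c}\not\subset\fa$, $\frk{b}\frk{c}\subset\fa$, and $\supp(\fa)\cap\supp(\frk{b})\subset B(z,r)$. I would then form
\[
  \frk{N}=\ol{\spn\{h(T)x : h\in\frk{c},\, x\in\fH\}}.
\]
By construction $\frk{N}$ is $T$-invariant, and since each $h(T)$ belongs to the weak-$*$ closed algebra generated by $T$---hence to $\{T\}''$---the subspace $\frk{N}$ is actually invariant for the full commutant $\{T_1,\ldots,T_d\}'$. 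The assumption $\frk{c}\not\subset\Ann(T)$ forces $\frk{N}\neq\{0\}$. For any $g\in\frk{b}$ and $h\in\frk{c}$ the product $gh$ lies in $\fa$, so $g(T)h(T)=0$; this gives $\frk{b}\subset\Ann(T|_\frk{N})$, and trivially also $\fa\subset\Ann(T|_\frk{N})$.

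The restriction $T|_\frk{N}$ remains a commuting $K$-contraction (the defining inequality descends to $T$-invariant subspaces thanks to $\|P_\frk{N}T^{*\alpha}x\|\leq\|T^{*\alpha}x\|$ for $x\in\frk{N}$) and is absolutely continuous via the weak-$*$ continuous functional calculus $f\mapsto f(T)|_\frk{N}$. Combining parts (ii) and (iii) of Theorem \ref{T:suppprop},
\[
  \supp(\Ann(T|_\frk{N}))\subset\supp(\fa+\frk{b})\subset\supp(\fa)\cap\supp(\frk{b})\subset B(z,r),
\]
so Lemma \ref{L:wSpecInSupp} gives $\wSpec(T|_\frk{N})\subset B(z,r)$, and therefore $\sigma'(T|_\frk{N})\subset B(z,r)$. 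But $\sigma'(T|_\frk{N})$ contains the Taylor spectrum $\TSpec(T|_\frk{N})$, which is non-empty because $\frk{N}\neq\{0\}$, and it is contained in $\sigma'(T)$ by \eqref{Eq:ComSpecRestrict} since $\frk{N}$ is commutant-invariant. Hence $\sigma'(T)\cap B(z,r)\neq\emptyset$, as desired.

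The main technical step requiring care is the verification that $T|_\frk{N}$ is a bona fide absolutely continuous $K$-contraction; that is precisely what licenses Lemma \ref{L:wSpecInSupp} on the restricted space and lets the support bound turn into a spectral bound. The choice of $\frk{N}$ as the closed linear span of the ranges of $\{h(T):h\in\frk{c}\}$ is the multivariate analogue of the subspace $\ol{\ran\,\theta_2(T)}$ used in the classical $C_0$ proof \cite[Theorem 2.4.11]{bercovici1988}, with the ideal $\frk{c}$ now playing the role of the inner factor complementary to one whose support sits near $z$.
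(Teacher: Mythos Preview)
Your proof is correct and follows essentially the same route as the paper: form the commutant-invariant subspace $\frk{N}=\ol{\spn\{h(T)x:h\in\frk{c},\,x\in\fH\}}$, observe that $\frk{b}\subset\Ann(T|_\frk{N})$, use Lemma~\ref{L:wSpecInSupp} together with Theorem~\ref{T:suppprop} and \eqref{Eq:ComSpecRestrict} to trap $\sigma'(T|_\frk{N})$ inside $\supp(\fa)\cap\supp(\frk{b})\subset B(z,r)$, and conclude via non-emptiness and closedness of $\sigma'(T)$. The only cosmetic difference is that you pass through $\supp(\fa+\frk{b})$ while the paper intersects the bounds $\sigma'(T|_\frk{N})\subset\supp(\frk{b})$ and $\sigma'(T|_\frk{N})\subset\sigma'(T)\subset\supp(\fa)$ directly; you also spell out that $T|_\frk{N}$ is an absolutely continuous $K$-contraction, which the paper uses implicitly when invoking Lemma~\ref{L:wSpecInSupp} on the restriction.
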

\begin{proof}
	Let $\frk{H}$ denote the Hilbert space on which $T_1,\ldots,T_d$ act and let $\frk{a}=\Ann(T)$.
	If $\sigma'(T)$ is empty, then $\frk{H}=\{0\}$ and $\Ann(T)=\MH$.
	We then have that $\Loc(\fa)$ is empty as well and the desired statement holds.

	Assume now that $\sigma'(T)$ is non-empty. We may clearly assume that $\Loc(\frk{a})$ is non-empty as well. Fix $z\in\Loc(\frk{a})$ and $r>0$. There exist weak-$*$ closed ideals $\frk{b},\frk{c}\subset\MH$ such that $\frk{c} \not\subset \frk{a}$, $\frk{b}\frk{c}\subset\frk{a}$, and $\supp(\frk{a})\cap\supp(\frk{b})\subset B(z,r)$. The fact that $\fc\not\subset \fa$ implies that the closed subspace \[\frk{M}=\ol{\spn\{\ran f(T):f\in \fc\}}\] is non-zero. Note also that $\fM$ is invariant for $\{T\}'$. Because $\fb\fc\subset \fa=\Ann(T)$, we have $\frk{b}\subset \Ann(T|_{\frk{M}})$. 
		By Theorem \ref{T:suppprop} and Lemma \ref{L:wSpecInSupp} we have
	\[ \sigma'(T|_{\frk{M}})\subset \supp(\Ann(T|_\fM))\subset \supp(\frk{b}). \]
	On the other hand, by \eqref{Eq:ComSpecRestrict} and Lemma \ref{L:wSpecInSupp} we have
	\[ \sigma'(T|_{\frk{M}})\subset \sigma'(T) \subset\supp(\frk{a}). \]
	Thus, 
	\[ \sigma'(T|_{\frk{M}}) \subset \supp(\frk{a})\cap\supp(\frk{b})\subset B(z,r), \]
	whence
	\[ \sigma'(T|_{\frk{M}})\subset \sigma'(T)\cap B(z,r). \]
	Now, $\sigma'(T|_{\frk{M}})$ is non-empty as $\fM$ is non-zero, and so  $\sigma'(T)\cap B(z,r)$ is non-empty.
	This holds for any $r>0$, and thus $z\in \sigma'(T)$ since $\sigma'(T)$ is closed.

\end{proof}

A statement stronger than the one above would be that $\Loc(\fa)\subset \TSpec(T)$. We do not know if this holds. One obstacle to a direct modification of the above proof is the failure of \eqref{Eq:ComSpecRestrict} for the Taylor spectrum, as shown in \cite{timko2019spec}.

Summarizing the spectral information contained in Lemma \ref{L:wSpecInSupp} and Theorem \ref{T:LocInSigPrime} for an absolutely continuous commuting $K$-contraction, we write
\[ \Loc(\Ann (T))\subset\sigma'(T)\subset \supp(\Ann (T)). \]
Next, we show that both of these inclusions can be strict in general.

\begin{example}\label{E:SmallSpecLargeSupp}
    Let $\H$ be the Drury-Arveson space on $\bB_2$. In this case, a $K$-contraction is simply a row contraction. Let $\fH$ be a Hilbert space and let $A\in \B(\fH)$ be a quasi-nilpotent operator which is not nilpotent and such that $\|A\|<1$. We consider the pure commuting row contraction $T=(A,0)$. We claim that 
    \[
     \Ann(T)=\{f\in \M(\H): f(z,0)=0 \text{ for every } z\in \bD\}.
    \]
	To see the non-trivial inclusion, fix $f\in \Ann(T)$. Define $f_1:\bD\to \bC$ as
	\[
	 f_1(z)=f(z,0), \quad z\in \bD.
	\]
	Assume that $f_1$ is not the zero function. Clearly, we have that $f_1$ is a bounded holomorphic function on $\bD$. Thus, there is a non-negative integer $p$ and a bounded holomorphic function $g$ on $\bD$ with $g(0)\neq 0$ such that $f_1=x^p g$. Since the spectrum of $A$ consists only of $\{0\}$, by the spectral mapping theorem we infer that $g(A)$ is invertible. We observe that
	\[
	 0=f(T)=f_1(A)=A^p g(A)
	\]
	and thus infer that $A^p=0$, contrary to the assumption that $A$ is not nilpotent. Thus, $f_1$ is the zero function and the claim is established. By virtue of Theorem \ref{T:suppEqAZ} and the fact that $x_2\in \Ann(T)$, we see that 
	$
	 \supp(\Ann(T))=\ol{\bD}\times \{0\}.
	$
	On the other hand, using that $A$ is quasinilpotent it is readily seen that $\wSpec(T)=\{(0,0)\}$ so indeed the inclusion
	\[
	 \sigma'(T)\subset \supp(\Ann (T))
	\]
is strict.
	\qed
\end{example}

The previous example shows that the inclusion $\sigma'(T)\subset \supp(\Ann (T))$ is strict in general. To show that the inclusion $\Loc(\Ann (T))\subset\sigma'(T) $ is also typically strict, we start with a general result of independent interest.

%{\color{blue} I am not suggesting we keep all of the following, but I wanted the options to be on the page.}

\begin{theorem}\label{T:PrimeIdealEmptyLoc} % (Upgrade to Theorem; move just after the definition of localizability)
Let $\H$ be a regular unitarily invariant space and let $\frk{a}$ be a weak-$*$ closed prime ideal of $\MH$. % that is generated by homogeneous polynomials.
	If $\ZBd(\frk{a})$ has positive dimension as a complex analytic variety, then $\Loc(\frk{a})$ is empty.% \textcolor{green}{Check the words I added about dimension.}
\end{theorem}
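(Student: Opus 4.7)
The argument will be quite short once we exploit the primeness hypothesis. The plan is to argue by contradiction: assume there exists $z \in \Loc(\frk{a})$ and show that the support of $\frk{a}$ must be contained in $\{z\}$, which forces $\ZBd(\frk{a})$ to have dimension at most $0$, contradicting the hypothesis.

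More concretely, fix $z \in \Loc(\frk{a})$ and $r > 0$. By the definition of localizability, there exist weak-$*$ closed ideals $\frk{b}, \frk{c} \subset \MH$ with $\frk{c} \not\subset \frk{a}$, $\frk{b}\frk{c} \subset \frk{a}$, and $\supp(\frk{a}) \cap \supp(\frk{b}) \subset B(z,r)$. Since $\frk{a}$ is prime and $\frk{c} \not\subset \frk{a}$, primeness forces $\frk{b} \subset \frk{a}$. By the order-reversing property of supports (Theorem \ref{T:suppprop}(ii)), this gives $\supp(\frk{a}) \subset \supp(\frk{b})$, so
\[
\supp(\frk{a}) = \supp(\frk{a}) \cap \supp(\frk{b}) \subset B(z,r).
\]
Since $r > 0$ was arbitrary and $\supp(\frk{a})$ is closed, this yields $\supp(\frk{a}) \subset \{z\}$.

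Invoking Theorem \ref{T:suppprop}(i), we conclude $\ZBd(\frk{a}) = \supp(\frk{a}) \cap \BB_d \subset \{z\} \cap \BB_d$, which is either empty or a single point, and thus has dimension $0$ as a complex analytic variety. This contradicts the hypothesis that $\ZBd(\frk{a})$ has positive dimension, so $\Loc(\frk{a})$ must be empty.

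There is no real obstacle here, as the heavy lifting has already been done by Theorem \ref{T:suppprop}. The only thing to be mindful of is to make sure we use the right formulation of "prime": a proper weak-$*$ closed ideal $\frk{a}$ is prime when $\frk{b}\frk{c} \subset \frk{a}$ implies $\frk{b} \subset \frk{a}$ or $\frk{c} \subset \frk{a}$, which is precisely what the definition of localizability is set up to exploit (the side condition $\frk{c} \not\subset \frk{a}$ forces us into the other alternative).
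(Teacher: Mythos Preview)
Your proof is correct and follows essentially the same route as the paper's: both use primeness to force $\frk{b}\subset\frk{a}$ whenever $\frk{b}\frk{c}\subset\frk{a}$ with $\frk{c}\not\subset\frk{a}$, then invoke Theorem~\ref{T:suppprop}(ii) to conclude $\supp(\frk{a})\cap\supp(\frk{b})=\supp(\frk{a})$, which cannot fit in an arbitrarily small ball since $\ZBd(\frk{a})\subset\supp(\frk{a})$ has positive dimension. The only cosmetic differences are that the paper argues directly (fixing a uniform $r$ in advance) rather than by contradiction, and it uses the element-wise formulation of primeness (picking $f\in\frk{c}\setminus\frk{a}$ and observing $f\frk{b}\subset\frk{a}$) rather than the equivalent ideal-wise formulation you invoke.
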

\begin{proof}
	Assume that $\ZBd(\frk{a})$ has dimension $m$ as a complex analytic variety. Then, it contains a biholomorphic image of $\bB_m$, and thus so does $\supp(\frk{a})$ by Theorem \ref{T:suppprop}. In particular, there is $r>0$ small enough such that $\supp(\fa)$ is not contained in $B(z,r)$ for any $z\in \bC^d$.
	Let $\frk{b},\frk{c}\subset\MH$ be weak-$*$ closed ideals such that $\frk{b}\frk{c}\subset \frk{a}$ and $\frk{c} \not\subset \frk{a}$.
	Fix $f\in\frk{c}\bksl\frk{a}$, and note that $f\frk{b}\subset\frk{a}$.
	Because $\frk{a}$ is prime and $f\nin\frk{a}$, it follows that $\frk{b}\subset\frk{a}$ and thus
	$
		\supp(\frk{a})\cap\supp(\frk{b})=\supp(\frk{a})
	$
	is not contained in $B(z,r)$ for any $z\in \bC^d$. Hence, $\supp(\frk{a})$ cannot be localized at any point.
\end{proof}

We are not aware of a description of all the prime ideals of $\mc{M}(\mc{H})$. Nevertheless, we can show that these are at least as plentiful as the prime homogeneous ideals of $\CC[x_1,\ldots,x_d]$.

\begin{lemma}
Let $\H$ be a regular unitarily invariant space and let $\frk{p}$ be a homogeneous prime ideal of $\CC[x_1,\ldots,x_d]$. Let $\frk{a}\subset \M(\H)$ be the weak-$*$ closure of $\frk{p}$.
	Then $\frk{a}$ is a prime ideal of $\MH$.
	\label{L:PHIPC} % Prime homogeneous ideal, prime closure
\end{lemma}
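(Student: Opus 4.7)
The plan is to prove the stronger identity $\fa\cap\CC[x_1,\ldots,x_d]=\frk{p}$ and then transfer primality of $\frk{p}$ up to $\fa$ by the standard graded-ring argument. The key input will be a weak-$*$ continuous ``projection onto homogeneous components'' of $\M(\H)$.

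For each multi-index $\alpha\in\NN^d$, the excerpt records that $\{x^\alpha\}$ is an orthogonal basis of $\H$ with explicit norms, so the linear functional
\[
\phi_\alpha(f)=\frac{\ip{M_f\cdot 1,\,x^\alpha}_\H}{\|x^\alpha\|_\H^2}
\]
on $\M(\H)$ returns the $x^\alpha$-coefficient of the Taylor expansion of $f$; it is weak-$*$ continuous because it is a matrix coefficient on $\B(\H)$, via the identification \eqref{Eq:commMH}. For $n\geq 0$, set
\[
\Psi_n(f)=\sum_{|\alpha|=n}\phi_\alpha(f)\,x^\alpha,
\]
a weak-$*$ continuous linear map from $\M(\H)$ into the finite-dimensional subspace of homogeneous polynomials of degree $n$. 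Since $\frk{p}$ is homogeneous we have $\Psi_n(\frk{p})\subset \frk{p}_n$, where $\frk{p}_n=\frk{p}\cap\CC[x_1,\ldots,x_d]_n$; finite-dimensionality of $\frk{p}_n$ makes it weak-$*$ closed, so weak-$*$ continuity of $\Psi_n$ gives $\Psi_n(\fa)\subset \frk{p}_n\subset\frk{p}\subset\fa$. Any polynomial $q\in\fa$ is then the finite sum $q=\sum_n\Psi_n(q)$ of elements of $\frk{p}$, yielding $\fa\cap\CC[x_1,\ldots,x_d]=\frk{p}$.

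To conclude, suppose $f,g\in\M(\H)$ satisfy $fg\in\fa$ but $f,g\nin\fa$. Write the Taylor expansions $f=\sum f_n$ and $g=\sum g_n$ with $f_n=\Psi_n(f)$ and $g_n=\Psi_n(g)$ polynomials, and let $i_0,j_0$ be the smallest indices with $f_{i_0}\nin\fa$ and $g_{j_0}\nin\fa$. Then
\[
\Psi_{i_0+j_0}(fg)=\sum_{k+l=i_0+j_0} f_k g_l\in\fa,
\]
and every term $f_kg_l$ with $(k,l)\neq(i_0,j_0)$ lies in $\fa$ (either $k<i_0$ and $f_k\in\fa$, or $l<j_0$ and $g_l\in\fa$), so $f_{i_0}g_{j_0}\in\fa$. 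Being a polynomial, $f_{i_0}g_{j_0}\in\fa\cap\CC[x_1,\ldots,x_d]=\frk{p}$; primality of $\frk{p}$ in $\CC[x_1,\ldots,x_d]$ forces $f_{i_0}\in\frk{p}\subset\fa$ or $g_{j_0}\in\frk{p}\subset\fa$, contradicting the choice of $i_0,j_0$.

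The main point to verify carefully is that $\Psi_n$ really is weak-$*$ continuous, i.e.\ that individual Taylor coefficients are weak-$*$ continuous functionals on $\M(\H)$; once this is in hand, the homogeneous decomposition together with primality of $\frk{p}$ does all the work. An equivalent alternative would be to use the rotation action $\sigma_\lambda(f)(z)=f(\lambda z)$ of $\TT$ on $\M(\H)$ (a weak-$*$ homeomorphism since $K$ is unitarily invariant) and define $\Psi_n(f)=\int_\TT \ol{\lambda}^n\sigma_\lambda(f)\,dm(\lambda)$ as a weak-$*$ integral; $\sigma_\lambda$-invariance of $\fa$ inherited from homogeneity of $\frk{p}$ then gives $\Psi_n(\fa)\subset\fa$ directly, at the cost of treating a vector-valued integral.
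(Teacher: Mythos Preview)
Your strategy is the same as the paper's: build a weak-$*$ continuous ``homogeneous component'' map $\Psi_n$, use it to identify $\fa\cap\CC[x_1,\dots,x_d]$ with $\frk p$, and then run the graded Cauchy-product argument. The paper does this too, only it phrases $\Psi_n$ as the orthogonal projection $Q_n$ of $\H$ onto degree-$n$ homogeneous polynomials, and it routes through $[\fa\H]$ rather than $\fa$ directly. Your write-up is in fact a bit cleaner in one respect: you do the symmetric two-index trick with $(i_0,j_0)$, whereas the paper fixes a first index for $g$ and then runs an induction on the components of $f$.

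There is, however, one genuine gap. When you write ``let $i_0,j_0$ be the smallest indices with $f_{i_0}\nin\fa$ and $g_{j_0}\nin\fa$'', you are implicitly using that $f\nin\fa$ forces some $f_n\nin\fa$. Your maps $\Psi_n$ only give the \emph{forward} inclusion $\Psi_n(\fa)\subset\fa$; they do not show that if every $\Psi_n(f)\in\fa$ then $f\in\fa$. This step needs an argument.

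The paper's device for this is precisely the detour through $[\fa\H]$: one proves $\frk p=[\fa\H]\cap\CC[x_1,\dots,x_d]$, and then $g\nin\fa$ implies (via Theorem~\ref{T:LatIsomThm} and density of multipliers in $\H$) that $g\nin[\fa\H]$; since $g=\sum_n g_n$ converges in the $\H$-norm and $[\fa\H]$ is $\H$-closed, some $g_n\nin[\fa\H]$, hence $g_n\nin\frk p$. Your own rotation-action remark actually gives a cleaner fix that stays inside $\M(\H)$: since $\sigma_\lambda$ is an isometric automorphism of $\M(\H)$ (unitary invariance of $K$), the averaging formula $\Psi_n(f)=\int_{\TT}\ol{\lambda}^n\sigma_\lambda(f)\,dm(\lambda)$ yields $\|f_n\|_{\M(\H)}\le\|f\|_{\M(\H)}$. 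Thus for $0<r<1$ the series $\sum_n r^n f_n$ converges absolutely in $\M(\H)$ to $f_r$; if every $f_n\in\fa$ then $f_r\in\fa$, and since $f_r\to f$ weak-$*$ (as recorded after Theorem~\ref{T:TandAHFCAgree}) we get $f\in\fa$. Either route closes the gap; once it is closed, your argument is complete.
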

\begin{proof}
    It is well known that $\H=\bigoplus_{n=0}^\infty \H_n$ where $\H_n$ denotes the subspace of homogeneous polynomials of degree $n$ in $\H$ (see \cite[Example 6.1]{hartz2017isom}). For each $n\geq 0$, we let $Q_n\in \B(\H)$ denote the orthogonal projection onto $\H_n$. Because $\frk{p}$ is homogeneous, we have that $Q_n \frk{a}\subset  \frk{a}$ for every $n\geq 0$.    
	
	We now claim that 
	\begin{equation}\label{Eq:pa}
	\frk{p}=[\frk{a}\mc{H}]\cap\CC[x_1,\ldots,x_d].
	\end{equation}
	Plainly $\frk{p}\subset [\frk{a}\mc{H}]\cap\CC[x_1,\ldots,x_d]$. To see the reverse inclusion, 
	let $q\in [\frk{a}\mc{H}]\cap\CC[x_1,\ldots,x_d]$. 
    	For $N$ sufficiently large, we see that $R_N q=q$ where $R_N=\sum_{n=0}^N Q_n$. Since the range of $R_N$ is finite-dimensional we necessarily have that $R_N \frk{p}\mc{H}=R_N[\frk{a}\mc{H}].$ Moreover, it is readily verified that $R_N\frk{p}\mc{H}\subset\frk{p}$, and therefore
	\[ q=R_Nq\in R_N[\frk{a}\mc{H}]=R_N\frk{p}\mc{H}\subset \frk{p}, \]
	proving \eqref{Eq:pa}.

    Going back to proving that $\fa$ is prime in $\M(\H)$, we let $f,g\in\MH$ be such that $fg\in\frk{a}$ and $g\nin\frk{a}$. We must show that $f\in \fa$.  Decompose $f,g$ in homogeneous components as $f=\sum_{n=0}^\infty f_n$ and $g=\sum_{n=0}^\infty g_n$. 	Since $g\notin \fa$, Theorem \ref{T:LatIsomThm} implies that $\ran M_g\not\subset [\fa \H]$, and thus $g\notin[\fa \H]$. In particular, there is an index $n\geq 0$ such that $g_n\notin [\fa\H]$.
    Let $n_0\geq 0$ be the first such index and put $g'=g-\sum_{n<n_0}g_n$. It is easy to check that $g'f\in \fa$.
	
	Let $m\geq 0$ be an integer. By an application of Theorem \ref{T:GleasonTrick}, we find multipliers $\phi,\psi\in\MH$, both vanishing to order $m+n_0+1$ at the origin, such that
	\[ g'=\sum_{r=0}^{m} g_{n_0+r} + \phi, \quad f=\sum_{s=0}^{m+n_0} f_s + \psi. \]
	A calculation then reveals that
	\[
	 \sum_{j=0}^{m} g_{n_0+j}f_{m-j}=Q_{m+n_0}(g'f)\in Q_{m+n_0}(\fa)\subset \fa
	\]
	so that 
	\begin{equation}\label{Eq:prime}
	 \sum_{j=0}^{m} g_{n_0+j}f_{m-j} \in [\fa\H]\cap \bC[x_1,\ldots,x_d]=\fp
	\end{equation}
	by virtue of \eqref{Eq:pa}.
	
	For $m=0$, \eqref{Eq:prime} says that $g_{n_0}f_0\in\frk{p}$.
	Because $g_{n_0}\nin [\fa \H]$, we have that $g_{n_0}\notin\frk{p}$, so that $f_0\in\frk{p}$ because $\fp$ is prime. 
	Suppose now that we have $f_0,\dots,f_m\in\frk{p}$ for some $m\geq 0$. Using \eqref{Eq:prime} again we find 
	\[
	g_{n_0}f_{m+1}+\sum_{j=1}^{m+1}g_{n_0+j}f_{m+1-j}=\sum_{j=0}^{m+1} g_{n_0+j}f_{m+1-j}\in\frk{p}
    \]
and thus $g_{n_0}f_{m+1}\in\frk{p}$. Again, because $\frk{p}$ is prime and $g_{n_0}\nin\frk{p}$, it follows that $f_{m+1}\in\frk{p}$. By induction, we conclude that $f_n\in \frk{p}$ for every $n\geq 0$, and thus $f\in [\fa \H]$. Theorem \ref{T:LatIsomThm} implies that $f\in \fa$ and we conclude that $\frk{a}$ is prime.
\end{proof}

We can now give an example that shows that the inclusion $\Loc(\Ann(T))\subset \sigma'(T)$ is strict in general, even for the functional model.

\begin{example}\label{Ex:strictincl2}
Let $\H$ be a regular unitarily invariant space. Let $\frk{p}$ be a prime homogeneous ideal of $\CC[x_1,\dots,x_d]$ for which $\ZBd(\frk{p})$ has positive dimension as a complex analytic variety. Let $\frk{a}$ be the weak-$*$ closure of $\frk{p}$ in $\MH$. Then, $\ZBd(\frk{p})=\ZBd(\frk{a})$. By Lemma \ref{L:PHIPC}, $\frk{a}$ is a prime ideal of $\MH$, so that $\Loc(\fa)$ is empty by Theorem \ref{T:PrimeIdealEmptyLoc}. On the other hand, $\fa$ is clearly a proper ideal so that $\H(\fa)$ is not the zero space by virtue of Theorem \ref{T:LatIsomThm}. Hence $\sigma'(\kZa)$ is non-empty and the inclusion $\Loc(\fa)\subset \sigma'(\kZa)$ is strict. 
\qed
\end{example}

The foregoing discussion seems to indicate that $\Loc(\Ann(T))$ can contribute information about the spectrum only where $\ZBd(\Ann(T))$ is  zero-dimensional. This intuition will guide us for the remainder of the paper, wherein we extract spectral information whenever the zero set of the annihilator is small. A key technical tool will be the following.

\begin{theorem}
Let $\H$ be a regular unitarily invariant space with kernel $K$. Let $T=(T_1,\dots,T_d)$ be a $K$-pure commuting $K$-contraction and let $\fa=\Ann(T)$. Let $C\subset \supp(\fa)$ be a non-empty compact subset such that $\supp(\fa)\bksl C$ is also compact. Then $C\cap\TSpec(T)$ is non-empty.
	\label{T:KNotEmptyIntersectNotEmpty}
\end{theorem}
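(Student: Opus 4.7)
The plan is to argue by contradiction using the functional-model dilation from Corollary~\ref{C:GetInter} together with the Taylor functional calculus to split $\H(\fa)\otimes\fE$ according to the decomposition $\supp(\fa)=C\cup(\supp(\fa)\setminus C)$. First note that by Theorem~\ref{T:suppEqSpec} we have $\supp(\fa)=\TSpec(\kZa)$, and by Lemma~\ref{L:wSpecInSupp} applied to $T$ we have $\TSpec(T)\subset\supp(\fa)$. If $C=\supp(\fa)$ then $\supp(\fa)\setminus C$ is empty and the conclusion is immediate from $\TSpec(T)\ne\emptyset$ (which follows because the non-emptiness of $C$ forces $\fH\ne\{0\}$), so we may assume both $C$ and $K:=\supp(\fa)\setminus C$ are non-empty compact sets; we also assume for contradiction that $C\cap\TSpec(T)=\emptyset$, so that $\TSpec(T)\subset K$.

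Pick disjoint open neighborhoods $U_1\supset C$ and $U_2\supset K$ with $\TSpec(T)\subset U_2$, and let $\chi\in\mathcal{O}(U_1\cup U_2)$ be the characteristic function of $U_1$. Using Corollary~\ref{C:GetInter}, take an isometry $V:\fH\to\H(\fa)\otimes\fE$ with $V^*(\kZa_j\otimes I_\fE)=T_jV^*$ whose cokernel contains no non-zero $\fM\otimes\fE$ for $\fM\subset\H(\fa)$ invariant under $\kZa$. Applying Theorem~\ref{T:Taylorprop}(iii) to $\kZa$ produces a non-zero idempotent $P=\tau_{\kZa,U_1\cup U_2}(\chi)$ commuting with $\kZa$ and satisfying $\TSpec(\kZa|_{\ran P})=C$; in particular $\ran P$ is a non-zero closed $\kZa$-invariant subspace of $\H(\fa)$. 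By part~(v) of that theorem, $P\otimes I_\fE=\tau_{\kZa\otimes I_\fE,U_1\cup U_2}(\chi)$, and part~(iv) confirms $U_1\cup U_2\supset\TSpec(\kZa\otimes I_\fE)$. Part~(ii), applied with the intertwiner $V^*$, then yields
\[
V^*(P\otimes I_\fE)=\tau_{T,U_1\cup U_2}(\chi)V^*.
\]
Finally, since $\chi$ vanishes on $U_2\supset\TSpec(T)$, part~(i) of Theorem~\ref{T:Taylorprop} identifies $\tau_{T,U_1\cup U_2}(\chi)=\tau_{T,U_2}(0)=0$. Consequently $V^*(P\otimes I_\fE)=0$, i.e.\ $\ran P\otimes\fE\subset\ker V^*$, contradicting the minimality of $V$ guaranteed by Corollary~\ref{C:GetInter}.

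The main conceptual step is the realization that Theorem~\ref{T:Taylorprop} allows one to transport the spectral idempotent constructed on $\kZa$ through the dilation $V$ to $T$, and that the disjointness of $U_1$ and $U_2$ together with $\TSpec(T)\subset U_2$ makes this transported idempotent vanish on $\fH$. There is no serious analytic obstacle; the main thing to verify carefully is that all invocations of parts~(i)--(v) of Theorem~\ref{T:Taylorprop} are on admissible open sets (i.e.\ neighborhoods of the relevant Taylor spectra), which is arranged by the initial choice of $U_1$ and $U_2$ and by $\TSpec(T)\subset\supp(\fa)$.
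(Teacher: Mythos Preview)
Your proof is correct and follows essentially the same approach as the paper's: both use Corollary~\ref{C:GetInter} to dilate $T$ to $\kZa\otimes I_\fE$, apply the Taylor functional calculus to the characteristic function of an open neighborhood of $C$, transport the resulting idempotent through the intertwiner $V^*$, and derive a contradiction with the minimality property of $V$. The only difference is organizational---the paper first isolates the claim $\chi_U(T)\neq 0$ and then concludes, whereas you assume $C\cap\TSpec(T)=\emptyset$ from the outset---and you should avoid reusing the letter $K$ for $\supp(\fa)\setminus C$, since $K$ already denotes the kernel.
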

\begin{proof} %(taus everywhere)
	Let $\frk{H}$ denote the Hilbert space on which $T_1,\ldots,T_d$ act. By Lemma \ref{L:wSpecInSupp}, we have that $\TSpec(T)\subset \supp(\frk{a})$. In particular, we may assume that $\supp(\fa)\neq C$, so that $\supp(\fa)\setminus C$ is non-empty. 	
	Since $C$ and $\supp(\frk{a})\bksl C$ are disjoint non-empty compact sets, there are disjoint open subsets $U$ and $V$ of $\bC^d$ such that $C\subset U$ and $ \supp(\frk{a})\bksl C\subset V$. Thus \[\TSpec(T)\subset \supp(\fa)\subset U\cup V\] and $\chi_U$, the characteristic function of $U$, is analytic on a neighborhood of $\TSpec(T)$.

	We claim that $\chi_U(T)$ is non-zero. To see this, we argue by contradiction and assume that $\chi_U(T)=0$. We apply Corollary \ref{C:GetInter} to see that there exist a Hilbert space $\fE$ and an isometry $V:\fH\to\H(\fa)\otimes \fE$ such that 
    \[
     VT^*=((\kZa)^*\otimes I_\fE)V.
     \]
	Moreover, $\ker V^*$ contains no non-zero closed subspace of the form $\frk{M}\otimes\frk{E}$ where $\fM\subset \H(\fa)$ is invariant for $\kZa$. Next, we invoke Theorem \ref{T:suppEqSpec} to see that $\supp(\fa)=\TSpec(\kZa)$. Thus, $\supp(\fa)=\TSpec(\kZa\otimes I_\fE)$ as seen in part (iv) of Theorem \ref{T:Taylorprop}. Hence, $\chi_U(\kZa\otimes I_\fE)$ and $\chi_U(\kZa)$ are well defined. By parts (ii) and (v) of Theorem \ref{T:Taylorprop}, we infer that
	\[ \chi_U(T)V^*=V^* \chi_U(\kZa\otimes I_{\frk{E}})= V^* ( \chi_U(\kZa)\otimes I_{\frk{E}}). \]
	Since $\chi_U(T)=0$ we must have $\ran(\chi_U(\kZa)\otimes I_{\frk{E}})\subset\ker V^*$.
	Moreover, $C\subset U$ and thus
	\[ C=\TSpec(\kZa|_{\ran\chi_U(\kZa)}) \]
	by part (iii) of Theorem \ref{T:Taylorprop}.
	Since $C$ is non-empty, we infer that $\ran \chi_U(\kZa)$ is a non-zero closed subspace which is invariant for $\kZa$. Thus, the inclusion
	\[(\ran \chi_U(\kZa))\otimes\frk{E}= \ran(\chi_U(\kZa)\otimes I_{\frk{E}}) \subset \ker V^*\]
	contradicts the given property of $V$. The claim is established.

	If $C\cap\TSpec(T)$ were empty, then $V$ would be an open neighbourhood of $\TSpec(T)$ on which $\chi_U$ vanishes identically, so that $\chi_U(T)=0$, contrary to what was proved in the previous paragraph. 
	\end{proof}

Our next goal is to show a weak-$*$ closed ideal can be localized at any of its isolated points. To achieve this, we start with an ideal-theoretic analogue of part (iii) of Theorem \ref{T:Taylorprop}.

\begin{theorem}
	Let $\H$ be a maximal regular unitarily invariant space and let $\frk{a}$ be a weak-$*$ closed ideal of $\MH$.
	If $C_1,C_2$ are disjoint non-empty compact subsets of $\supp(\frk{a})$ such that $\supp(\frk{a})=C_1\cup C_2$, then there exist weak-$*$ closed ideals $\frk{a}_1,\frk{a}_2$ in $\MH$ such that
	\[ \frk{a}=\frk{a}_1\cap\frk{a}_2, \quad \supp(\frk{a}_1)=C_1, \quad \supp(\frk{a}_2)=C_2. \]
	\label{T:FactorOverCompacts}
\end{theorem}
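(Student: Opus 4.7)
The plan is to apply the Taylor spectral decomposition of Theorem~\ref{T:Taylorprop}(iii) to the functional model $\kZa$ and then translate the resulting subspace decomposition back into ideals via Theorem~\ref{T:LatIsomThm}.

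Concretely, Theorem~\ref{T:suppEqSpec} gives $\supp(\fa)=\TSpec(\kZa)=C_1\cup C_2$. Applying Theorem~\ref{T:Taylorprop}(iii) (with disjoint open neighbourhoods of $C_1$ and $C_2$) produces a non-zero idempotent $P\in\{\kZa\}''$ such that $\fM_1:=\ran P$ and $\fM_2:=\ker P$ are non-zero closed $\kZa$-invariant subspaces of $\H(\fa)$ satisfying $\TSpec(\kZa|_{\fM_i})=C_i$. Set $\fL_i:=\H(\fa)\ominus\fM_{3-i}$, the orthogonal complement taken inside $\H(\fa)$. Using that $\fM_{3-i}$ is $\kZa$-invariant and that $\H(\fa)$ is $M_x^*$-invariant in $\H$, a short direct verification shows $\fL_i^\perp=[\fa\H]\oplus\fM_{3-i}$ is invariant under every $M_{x_j}$; since the polynomials are weak-$*$ dense in $\M(\H)$ and the set of multipliers leaving a closed subspace invariant is weak-$*$ closed, $\fL_i^\perp$ is $\M(\H)$-invariant. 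Theorem~\ref{T:LatIsomThm} then yields weak-$*$ closed ideals $\fa_i\supset\fa$ with $\H(\fa_i)=\fL_i$.

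For the identity $\fa_1\cap\fa_2=\fa$, I would combine the straightforward relation $\iota(\fN_1\cap\fN_2)=\iota(\fN_1)\cap\iota(\fN_2)$ with $\rho\circ\iota=\id$ to obtain $\rho(\fa_1\cap\fa_2)=\rho(\fa_1)\cap\rho(\fa_2)$. Taking orthogonal complements in $\H$, this equals $\H\ominus\overline{\fL_1+\fL_2}$, and the orthogonal complement of $\fL_1+\fL_2$ inside $\H(\fa)$ is $\fM_1\cap\fM_2=\ran P\cap\ker P=\{0\}$. Hence $\rho(\fa_1\cap\fa_2)=[\fa\H]=\rho(\fa)$, and Theorem~\ref{T:LatIsomThm} gives the claim. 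For the support computation, the orthogonal decomposition $\H(\fa)=\fL_1\oplus\fM_2$ together with the skew decomposition $\H(\fa)=\ran P\dot{+}\ker P$ implies that $S:=P|_{\fL_1}\colon\fL_1\to\fM_1$ is a bounded bijection (hence invertible by the open mapping theorem). A short computation using the block form of $\kZa$ relative to $\H(\fa)=\fL_1\oplus\fM_2$ yields the intertwining $S\circ Z^{(\fa_1)}=(\kZa|_{\fM_1})\circ S$, so $Z^{(\fa_1)}$ and $\kZa|_{\fM_1}$ are similar commuting tuples. Similarity invariance of the Taylor spectrum gives $\TSpec(Z^{(\fa_1)})=\TSpec(\kZa|_{\fM_1})=C_1$, and Theorem~\ref{T:suppEqSpec} yields $\supp(\fa_1)=C_1$; the case $i=2$ is symmetric.

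The main subtlety is that the Taylor spectral idempotent $P$ is generally not self-adjoint, so $\fM_1$ and $\fM_2$ are only skew-complementary rather than orthogonal inside $\H(\fa)$. One cannot simply set $\fL_i:=\fM_i$, because the orthogonal complement of $\fM_i$ in $\H$ need not be $M_x$-invariant and hence need not correspond to any weak-$*$ closed over-ideal of $\fa$. The workaround is to use the orthogonal complements $\fL_i=\H(\fa)\ominus\fM_{3-i}$ to produce the ideals and then to recover the Taylor spectrum of the resulting functional model $Z^{(\fa_i)}$ from that of $\kZa|_{\fM_i}$ via the similarity implemented by $P$.
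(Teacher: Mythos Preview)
Your argument is correct and takes a genuinely different route from the paper's. The paper defines $\fa_i=\Ann(\kZa|_{\ran Q_i})$ directly as the annihilator of the restricted tuple, then uses maximality via Theorem~\ref{T:pureKcontr} to conclude that each restriction is $K$-pure, which in turn lets them invoke Theorem~\ref{T:KNotEmptyIntersectNotEmpty} to rule out any spurious component of $\supp(\fa_i)$ lying in $C_{3-i}$. Your approach instead produces $\fa_i$ from the lattice isomorphism applied to the $\M(\H)$-invariant subspace $[\fa\H]\oplus\fM_{3-i}$, and then recovers $\supp(\fa_i)=C_i$ by exhibiting an explicit similarity between the compressed model $Z^{(\fa_i)}$ and the restriction $\kZa|_{\fM_i}$, so that Theorem~\ref{T:suppEqSpec} applies directly. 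The similarity computation and the lattice identity $\rho(\fa_1\cap\fa_2)=\rho(\fa_1)\cap\rho(\fa_2)$ both check out as you describe.

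What your approach buys is that it never appeals to Theorem~\ref{T:pureKcontr} or Theorem~\ref{T:KNotEmptyIntersectNotEmpty}, and in particular it does not use the maximality hypothesis on $\H$ anywhere: Theorems~\ref{T:suppEqSpec}, \ref{T:Taylorprop}(iii), and \ref{T:LatIsomThm} are all stated for arbitrary regular unitarily invariant complete Nevanlinna--Pick spaces. So your argument in fact establishes the theorem without the word ``maximal'' in the hypotheses. The paper's route, by contrast, is tied to maximality through the equivalence of $K$-purity and $K$-asymptotic vanishing, but has the virtue of identifying $\fa_i$ concretely as an annihilator ideal and of exercising the machinery (Theorem~\ref{T:KNotEmptyIntersectNotEmpty}) that drives the later spectral results.
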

\begin{proof}
    Choose disjoint open sets $U_1$ and $U_2$ such that $C_1\subset U_1$ and $C_2\subset U_2$. Put $Q_1=\chi_{U_1}(\kZa)$ and $Q_2=\chi_{U_2}(\kZa)$. Recall now that $\kZa$ is $K$-asymptotically vanishing, and thus so are the commuting $d$-tuples $R=\kZa|_{\ran Q_1}$ and $S=\kZa|_{\ran Q_2}$. By Theorem \ref{T:pureKcontr}, we infer that $R$ and $S$ are $K$-pure. Set $\fa_1=\Ann(R)$ and $\fa_2=\Ann(S)$. It is clear that $\frk{a}\subset\fa_1\cap\fa_2$,
	so by Theorem \ref{T:suppprop} we may infer
	\begin{equation}\label{Eq:incl1} \supp(\fa_1)\cup\supp(\fa_2)\subset \supp(\fa)= C_1\cup C_2. \end{equation}
	 We see by Theorem \ref{T:suppEqSpec} and part (iii) of Theorem \ref{T:Taylorprop} that $\TSpec(R)=C_1$ and $\TSpec(S)=C_2$. In turn,  Lemma \ref{L:wSpecInSupp} implies that
	 \begin{equation}\label{Eq:incl2}
	  C_1=\TSpec(R)\subset \supp(\fa_1).
	 \end{equation}
    Setting $D_2=\supp(\fa_1)\cap C_2$, it is clear that $C_1$ and $D_2$ are disjoint compact sets while \eqref{Eq:incl1} and \eqref{Eq:incl2} imply that $\supp(\fa_1)=C_1\cup D_2$.
	Because \[\TSpec(R)\cap D_2\subset C_1\cap C_2=\varnothing\] it follows from Theorem \ref{T:KNotEmptyIntersectNotEmpty} that $D_2$ is empty, whence $\supp(\fa_1)=C_1$.
	In perfectly analogous fashion, one sees that $\supp(\fa_2)=C_2$.
	Note now that $I=Q_1+Q_2$ and $Q_1Q_2=Q_2Q_1=0$, so a routine argument shows that  $\kZa$ is similar to $R\oplus S$.
	It is readily seen that the similarity of $\kZa$ and $R\oplus S$ implies the similarity of $f(\kZa)$ and $f(R)\oplus f(S)$ for every $f\in\M(\H)$, and therefore
	\[ \frk{a}=\Ann(\kZa)=\Ann(R\oplus S)=\fa_1\cap\fa_2. \qedhere \]
\end{proof}

The following consequence is not needed for our purposes, but it is of independent interest. It says that removing one isolated point from a zero set for $\M(\H)$ yields another zero set. 

\begin{corollary}\label{C:ZeroSetsMinusPoints}
	Let $\H$ be a maximal regular unitarily invariant space and let $\fa$ be a weak-$*$ closed ideal of $\M(\H)$. Suppose $z$ is an isolated point of $\Z_{\bB_d}(\fa)$. Then, there exists another weak-$*$ closed ideal $\fb$ of $\M(\H)$ such that
	$
	 \Z_{\bB_d}(\fb)=\Z_{\bB_d}(\fa)\setminus \{z\}.
	$
\end{corollary}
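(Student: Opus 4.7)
The plan is to use Theorem \ref{T:FactorOverCompacts} to split $\supp(\fa)$ at the isolated point $z$, and then read off the zero set of one of the factors. The bulk of the work is showing that the isolation of $z$ inside $\Z_{\bB_d}(\fa)$ actually upgrades to isolation of $z$ inside the full support $\supp(\fa)$, which is a priori larger because it can include boundary points on the sphere.

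First I would verify this upgrade. By Theorem \ref{T:suppprop}, we have $\supp(\fa)\subset\ol{\bB}_d$ and $\supp(\fa)\cap\bB_d=\Z_{\bB_d}(\fa)$. Since $z\in\bB_d$ is isolated in $\Z_{\bB_d}(\fa)$, pick $r>0$ small enough that the open ball $B(z,r)$ is contained in $\bB_d$ and meets $\Z_{\bB_d}(\fa)$ only at $z$. Then $B(z,r)\cap\supp(\fa)=B(z,r)\cap\Z_{\bB_d}(\fa)=\{z\}$, so $\{z\}$ is isolated in $\supp(\fa)$.

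If $\supp(\fa)=\{z\}$, then $\Z_{\bB_d}(\fa)=\{z\}$ and we may take $\fb=\MH$, which satisfies $\Z_{\bB_d}(\fb)=\varnothing=\Z_{\bB_d}(\fa)\setminus\{z\}$. Otherwise set $C_1=\{z\}$ and $C_2=\supp(\fa)\setminus\{z\}$; these are disjoint non-empty compact sets whose union is $\supp(\fa)$ (compactness of $C_2$ follows from isolation of $z$). By Theorem \ref{T:FactorOverCompacts}, there exist weak-$*$ closed ideals $\fa_1,\fa_2\subset\MH$ with $\fa=\fa_1\cap\fa_2$, $\supp(\fa_1)=\{z\}$ and $\supp(\fa_2)=\supp(\fa)\setminus\{z\}$.

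Set $\fb=\fa_2$. Applying Theorem \ref{T:suppprop} once more,
\[
\Z_{\bB_d}(\fb)=\supp(\fa_2)\cap\bB_d=(\supp(\fa)\setminus\{z\})\cap\bB_d=(\supp(\fa)\cap\bB_d)\setminus\{z\}=\Z_{\bB_d}(\fa)\setminus\{z\},
\]
which is exactly the required identity. The only subtlety I anticipate is confirming that Theorem \ref{T:FactorOverCompacts} genuinely applies, i.e. that $C_2$ is compact; but this is immediate once $\{z\}$ has been shown to be an open point of $\supp(\fa)$ in the subspace topology inherited from $\ol{\bB}_d$.
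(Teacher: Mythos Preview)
Your proof is correct and follows essentially the same approach as the paper: split $\supp(\fa)$ at the isolated point via Theorem~\ref{T:FactorOverCompacts} and read off the zero set of one factor using Theorem~\ref{T:suppprop}. Your argument is in fact slightly more detailed than the paper's, since you explicitly verify that isolation of $z$ in $\Z_{\bB_d}(\fa)$ upgrades to isolation in $\supp(\fa)$ (the paper asserts this without justification), and your case split on $\supp(\fa)=\{z\}$ versus $\supp(\fa)\setminus\{z\}\neq\varnothing$ is a harmless variant of the paper's split on $\Z_{\bB_d}(\fa)=\{z\}$.
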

\begin{proof}
    
	If $\Z_{\bB_d}(\fa)=\{z\}$, then 
	\[\Z_	 {\bB_d}(\fa)\bksl\{z\}=\varnothing=\Z_{\bB_d}(\M(\H)).\]
	Suppose now that $\Z_{\bB_d}(\fa)\bksl\{z\}$ is non-empty.
	The singleton $\{z\}$ is a compact subset of $\supp(\frk{a})$ such that $\supp(\frk{a})\bksl\{z\}$ is also compact. Apply Theorem \ref{T:FactorOverCompacts} to find weak-$*$ closed ideals $\frk{b},\frk{c}$ of $\M(\H)$ such that $\frk{a}=\frk{b}\cap\frk{c}$, $\supp(\frk{b})=\supp(\frk{a})\bksl\{z\}$ and $\supp(\frk{c})=\{z\}$.
	Invoking Theorem \ref{T:suppprop}, we find \[\ZBd(\frk{b})=\BB_d\cap(\supp(\frk{a})\bksl\{z\})=\Z_{\bB_d}(\fa)\bksl\{z\}.\]
	as desired.
\end{proof}

We remark that in the case where $\H$ is the Hardy space, the previous result is an easy consequence of the known characterization of the zero sets for $H^\infty(\bD)$ in terms of the Blaschke condition \cite[page 64]{hoffman1988}. No such characterization appears to be known in general.

Going back to the task at hand, we can show that an ideal can be localized at every isolated point of its support. 
\begin{corollary}
Let $\H$ be a maximal regular unitarily invariant space and let $\fa$ be a weak-$*$ closed ideal of $\M(\H)$. Suppose $z\in\supp(\frk{a})$ has the property that for every $r>0$ there exists an open set $U\subset B(z,r)$ containing $z$ such that $\supp(\frk{a})\cap U$ is both closed and open in $\supp(\frk{a})$.
	Then, $z\in\mathrm{Loc}(\frk{a})$. In particular, $\Loc(\fa)$ contains all the isolated points of $\supp(\fa)$.
	\label{C:ClopenLem}
\end{corollary}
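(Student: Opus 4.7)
The plan is to reduce the localizability condition directly to Theorem \ref{T:FactorOverCompacts}, which is already tailored to splitting a weak-$*$ closed ideal along a clopen decomposition of its support. Fix $r>0$ and use the hypothesis to find an open set $U\subset B(z,r)$ containing $z$ such that $C_1:=\supp(\frk{a})\cap U$ is clopen in $\supp(\frk{a})$. Since $z\in C_1$, this set is non-empty, and $C_2:=\supp(\frk{a})\setminus C_1$ is a compact (possibly empty) subset disjoint from $C_1$ with $\supp(\frk{a})=C_1\cup C_2$.

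If $C_2=\varnothing$, then $\supp(\frk{a})=C_1\subset B(z,r)$, and the choice $\frk{b}=\frk{a}$, $\frk{c}=\M(\H)$ witnesses localizability: we have $\frk{b}\frk{c}\subset\frk{a}$ and $\supp(\frk{a})\cap\supp(\frk{b})=\supp(\frk{a})\subset B(z,r)$; moreover $\frk{c}\not\subset\frk{a}$ because $z\in\supp(\frk{a})$ forces $\frk{a}\ne\M(\H)$ (indeed $\supp(\M(\H))=\varnothing$ by Theorem \ref{T:suppprop} applied to the obvious decomposition $1=1+\sum_j(x_j-z_j)\cdot 0$). If $C_2\ne\varnothing$, apply Theorem \ref{T:FactorOverCompacts} to obtain weak-$*$ closed ideals $\frk{a}_1,\frk{a}_2$ with $\frk{a}=\frk{a}_1\cap\frk{a}_2$, $\supp(\frk{a}_1)=C_1$, and $\supp(\frk{a}_2)=C_2$. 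Take $\frk{b}:=\frk{a}_1$ and $\frk{c}:=\frk{a}_2$. Then $\frk{b}\frk{c}\subset\frk{a}_1\cap\frk{a}_2=\frk{a}$ and $\supp(\frk{a})\cap\supp(\frk{b})=C_1\subset U\subset B(z,r)$. Finally, if we had $\frk{c}\subset\frk{a}$, then from $\frk{a}\subset\frk{a}_2=\frk{c}$ we would get $\frk{a}=\frk{a}_2$, forcing $\supp(\frk{a})=C_2$, which contradicts $z\in C_1$; hence $\frk{c}\not\subset\frk{a}$, and we conclude $z\in\Loc(\frk{a})$.

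For the ``in particular'' clause, if $z$ is isolated in $\supp(\frk{a})$, then $\{z\}$ is open in $\supp(\frk{a})$, so there is an open $V\subset\bC^d$ with $V\cap\supp(\frk{a})=\{z\}$; for any $r>0$ the set $U:=V\cap B(z,r)$ then satisfies $U\cap\supp(\frk{a})=\{z\}$, which is trivially clopen in $\supp(\frk{a})$, so the hypothesis is met and the first part applies. The only step that is not essentially bookkeeping is the appeal to Theorem \ref{T:FactorOverCompacts}, which already does the heavy lifting of producing the ideals $\frk{a}_1,\frk{a}_2$; I do not anticipate a genuine obstacle beyond keeping track of the degenerate case $C_2=\varnothing$ separately, since the factorization theorem is stated under the assumption that both compact pieces are non-empty.
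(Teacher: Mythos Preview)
Your proof is correct and follows essentially the same approach as the paper: both reduce to Theorem \ref{T:FactorOverCompacts} to produce the ideals $\frk{b},\frk{c}$ from the clopen decomposition of $\supp(\frk{a})$, and both verify the three localizability conditions in the same way. The only cosmetic difference is that the paper disposes of the degenerate case globally (assuming $\supp(\frk{a})\neq\{z\}$ and then choosing $r$ small enough that $\supp(\frk{a})\setminus U$ is automatically non-empty), whereas you handle $C_2=\varnothing$ separately for each $r$ via the explicit choice $\frk{b}=\frk{a}$, $\frk{c}=\M(\H)$; your argument for $\frk{c}\not\subset\frk{a}$ via $\frk{a}=\frk{a}_2$ is equivalent to the paper's invocation of Theorem \ref{T:suppprop}(ii).
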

\begin{proof}
    If $\supp(\fa)=\{z\}$, then it is trivial that $z\in \Loc(\fa)$. Assume therefore that $\supp(\fa)\setminus\{z\}$ is non-empty. Hence, there is $r_0>0$ small enough so that $\supp(\fa)$ is not contained in $B(z,r_0)$. Let $0<r<r_0$ and let $U\subset B(z,r)$ be an open set containing $z$ such that $\supp(\frk{a})\cap U$ is open and closed in $\supp(\frk{a})$.
	Since $\supp(\frk{a})$ is compact, it follows that both $\supp(\frk{a})\cap U$ and $\supp(\frk{a})\bksl U$ are compact and non-empty.
	By Theorem \ref{T:FactorOverCompacts}, there are weak-$*$ closed ideals $\frk{b},\frk{c}$ of $\MH$ such that $\supp(\frk{b})=\supp(\frk{a})\cap U$, $\supp(\frk{c})=\supp(\frk{a})\bksl U$, and $\frk{a}=\frk{b}\cap\frk{c}$.
	Since $z\nin\supp(\frk{c})$, it follows from Theorem \ref{T:suppprop} that $\frk{c} \not\subset\frk{a}$.
	Plainly $\frk{b}\frk{c}\subset\frk{a}$ and
	\[ \supp(\frk{a})\cap\supp(\frk{b})=\supp(\frk{a})\cap U\subset B(z,r). \]
	Thus $z\in\Loc(\frk{a})$.
\end{proof}

Using this information, we can complement Theorem \ref{T:LocInSigPrime} in some special cases. 

\begin{corollary}\label{C:LocDimZero}
Let $\H$ be a maximal regular unitarily invariant space with kernel $K$.
	If $T=(T_1,\dots,T_d)$ is an absolutely continuous commuting $K$-contraction such that $\ZBd(\Ann (T))$ is discrete, then
	\[ \Loc(\Ann (T))\cap\BB_d=\sigma'(T)\cap\BB_d=\wSpec(T)\cap\BB_d=\ZBd(\Ann (T)). \]
\end{corollary}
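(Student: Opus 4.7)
The plan is to establish the chain of inclusions
\[ \ZBd(\Ann(T)) \subset \Loc(\Ann(T))\cap\BB_d \subset \sigma'(T)\cap\BB_d \subset \wSpec(T)\cap\BB_d \subset \ZBd(\Ann(T)) \]
which immediately yields the desired equalities. Three of these inclusions are essentially already in hand from earlier results, and the crux of the argument will be the first one.

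First, I would note that the inclusions in the middle and on the right are routine. Indeed, $\Loc(\Ann(T))\subset\sigma'(T)$ is exactly Theorem \ref{T:LocInSigPrime}, intersecting with $\BB_d$ gives the middle inclusion. The inclusion $\sigma'(T)\subset \wSpec(T)$ holds by definition, and $\wSpec(T)\cap\BB_d\subset \ZBd(\Ann(T))$ is the absolutely continuous part of Theorem \ref{T:SpecInZ}.

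The main task is then to show $\ZBd(\Ann(T))\subset\Loc(\Ann(T))$. Fix $z\in\ZBd(\Ann(T))$ and set $\fa=\Ann(T)$. By Theorem \ref{T:suppprop}(i), we have $\supp(\fa)\cap\BB_d=\ZBd(\fa)$. Since by hypothesis $\ZBd(\fa)$ is discrete and $z$ lies in the open set $\BB_d$, there exists an open neighbourhood $U\subset\BB_d$ of $z$ such that $U\cap \ZBd(\fa)=\{z\}$. As $U\subset \BB_d$, this gives $U\cap\supp(\fa)=\{z\}$, so $\{z\}$ is simultaneously open and closed (as a singleton) in $\supp(\fa)$; that is, $z$ is isolated in $\supp(\fa)$. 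Since $\H$ is maximal, Corollary \ref{C:ClopenLem} applies and yields $z\in\Loc(\fa)$. Clearly $z\in\BB_d$, so $\ZBd(\fa)\subset\Loc(\fa)\cap\BB_d$, completing the cycle.

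I do not anticipate any significant obstacle: the entire argument hinges on recognizing that the discreteness hypothesis on $\ZBd(\Ann(T))$, combined with the fact that the ball is open in $\cc{\BB}_d$, promotes each point of $\ZBd(\Ann(T))$ to an isolated point of the full support $\supp(\Ann(T))$. Once this is observed, the localizability criterion from Corollary \ref{C:ClopenLem} does all the work, and the other inclusions are citations.
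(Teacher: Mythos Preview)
Your proof is correct and follows essentially the same approach as the paper: both arguments chain the inclusions $\Loc(\fa)\subset\sigma'(T)\subset\wSpec(T)$ with the upper bound by $\ZBd(\fa)$ on the open ball, and then close the cycle by observing that discreteness of $\ZBd(\fa)=\supp(\fa)\cap\BB_d$ makes each of its points isolated in $\supp(\fa)$, whence Corollary~\ref{C:ClopenLem} applies. The only cosmetic difference is that you cite Theorem~\ref{T:SpecInZ} directly for $\wSpec(T)\cap\BB_d\subset\ZBd(\fa)$, whereas the paper routes this through Lemma~\ref{L:wSpecInSupp} and Theorem~\ref{T:suppprop}.
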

\begin{proof}
 By Theorem \ref{T:LocInSigPrime}, we see that $\Loc(\Ann(T))\subset \sigma'(T)$. Furthermore, we know from Lemma \ref{L:wSpecInSupp} that $\wSpec(T)\subset \supp(\Ann(T))$. Thus, we find
\[ \Loc(\Ann (T))\cap\BB_d\subset \sigma'(T)\cap\BB_d\subset \wSpec(T)\cap\BB_d\subset \supp(\Ann(T))\cap \bB_d. \]
By Theorem \ref{T:suppprop}, we see that $\supp(\Ann(T))\cap \bB_d=\Z_{\bB_d}(\Ann(T))$ is discrete, so all its points are isolated. Corollary \ref{C:ClopenLem} then gives
\[
 \Loc(\Ann (T))\cap\BB_d=\supp(\Ann(T))\cap \bB_d. \qedhere
\] 
\end{proof}

When the support of the annihilator is totally disconnected, the preceding corollary can be extended to apply to the sphere as well.

\begin{theorem}
Let $\H$ be a maximal regular unitarily invariant space with kernel $K$. 	Suppose $T=(T_1,\dots,T_d)$ is an absolutely continuous commuting $K$-contraction. If $\supp(\Ann (T))$ is totally disconnected, 
	then 
	\[ \Loc(\Ann (T))=\sigma'(T)=\wSpec(T)=\supp(\Ann (T)). \]
	\label{T:TotDiscon}
\end{theorem}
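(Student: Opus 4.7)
The plan is to close a chain of inclusions. Lemma \ref{L:wSpecInSupp} gives $\wSpec(T)\subset\supp(\Ann(T))$; the inclusion $\sigma'(T)\subset \wSpec(T)$ holds in general; and Theorem \ref{T:LocInSigPrime} gives $\Loc(\Ann(T))\subset\sigma'(T)$. Thus it suffices to prove the missing inclusion
\[
\supp(\Ann(T))\subset \Loc(\Ann(T)).
\]

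Fix $z\in\supp(\Ann(T))$ and $r>0$; the goal is to verify the hypothesis of Corollary \ref{C:ClopenLem}, namely the existence of an open set $U\subset B(z,r)$ containing $z$ such that $\supp(\Ann(T))\cap U$ is clopen in $\supp(\Ann(T))$. Recall that $\supp(\Ann(T))$ is a compact subset of $\cc{\BB}_d$ (Theorem \ref{T:suppprop}), so it is a compact Hausdorff space. A standard topological fact states that in a compact Hausdorff totally disconnected space, the clopen subsets form a basis for the topology at every point. Consequently, since $\supp(\Ann(T))\cap B(z,r)$ is a neighborhood of $z$ in $\supp(\Ann(T))$, there exists a clopen subset $V\subset \supp(\Ann(T))$ with $z\in V\subset B(z,r)$.

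Now I lift $V$ to an open subset of $\CC^d$. The sets $V$ and $\supp(\Ann(T))\setminus V$ are disjoint compact subsets of $\CC^d$, so by normality there exist disjoint open sets $W_1,W_2\subset\CC^d$ with $V\subset W_1$ and $\supp(\Ann(T))\setminus V\subset W_2$. Set $U=W_1\cap B(z,r)$; then $U\subset B(z,r)$, $z\in U$, and
\[
\supp(\Ann(T))\cap U\subset\supp(\Ann(T))\cap W_1=V\subset \supp(\Ann(T))\cap U,
\]
so $\supp(\Ann(T))\cap U=V$ is clopen in $\supp(\Ann(T))$. Corollary \ref{C:ClopenLem} now applies and yields $z\in\Loc(\Ann(T))$, completing the proof.

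The argument is essentially a direct assembly of previously established tools; the conceptual content sits entirely in Theorem \ref{T:FactorOverCompacts} and Corollary \ref{C:ClopenLem}, which convert topological (clopen) separation of the support into an ideal-theoretic factorization. The only point requiring care is the routine normality argument allowing us to realize a clopen subset of $\supp(\Ann(T))$ as the trace of an open subset of $\CC^d$ lying inside a prescribed ball, so that the hypothesis of Corollary \ref{C:ClopenLem} is literally met.
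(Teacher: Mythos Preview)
Your proof is correct and follows essentially the same route as the paper's: establish the chain $\Loc(\Ann(T))\subset\sigma'(T)\subset\wSpec(T)\subset\supp(\Ann(T))$ via Lemma~\ref{L:wSpecInSupp} and Theorem~\ref{T:LocInSigPrime}, then use the clopen-basis property of totally disconnected compact Hausdorff spaces to feed Corollary~\ref{C:ClopenLem}. Your normality argument lifting the clopen $V$ to an open $U\subset B(z,r)$ in $\CC^d$ is a detail the paper leaves implicit; the only minor quibble is that compactness of $\supp(\Ann(T))$ follows from Theorem~\ref{T:suppEqSpec} (Taylor spectra are compact) rather than Theorem~\ref{T:suppprop}.
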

\begin{proof}
	From Lemma \ref{L:wSpecInSupp} and Theorem \ref{T:LocInSigPrime}  it follows that
	\[ \Loc(\Ann (T))\subset \sigma'(T)\subset\wSpec(T)\subset \supp(\Ann (T)). \]
		Because $\supp(\Ann (T))$ is totally disconnected, compact and Hausdorff, it has a base for its topology consisting of sets that are simultaneously closed and open \cite[Theorem 29.7]{Willard_GenTop}. Hence, for every $z\in\supp(\Ann (T))$ and $r>0$ there exists an open subset $U\subset \bC^d$ containing $z$ such that $U\subset B(z,r)$ and $U\cap\supp(\Ann (T))$ is closed and open in $\supp(\Ann (T))$.
	By Corollary \ref{C:ClopenLem}, it follows that $\supp(\Ann (T))\subset\mathrm{Loc}(\Ann T)$.
\end{proof}

\subsection{The Taylor spectrum of $K$-pure commuting $K$-contractions}
We close the paper by leveraging the tools developed above to study the Taylor spectrum of $K$-pure commuting $K$-contractions. We start with a simple fact.

\begin{corollary}\label{C:IsoInTSpec}
Let $\H$ be a regular unitarily invariant space with kernel $K$. Let $T=(T_1,\dots,T_d)$ be a $K$-pure commuting $K$-contraction. Then, the isolated points of $\supp(\Ann(T))$ all lie in $\TSpec(T)$. 
\end{corollary}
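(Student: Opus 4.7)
The plan is to apply Theorem \ref{T:KNotEmptyIntersectNotEmpty} to the singleton $C = \{z\}$ for each isolated point $z$ of $\supp(\Ann(T))$. First I would observe that $\{z\}$ is trivially a non-empty compact subset of $\supp(\Ann(T))$. Next, I would argue that $\supp(\Ann(T))\setminus\{z\}$ is compact: by Theorem \ref{T:suppprop} the support is contained in $\overline{\BB}_d$ and is closed (this closedness also follows from Theorem \ref{T:suppEqSpec}), hence compact; since $z$ is isolated, the singleton $\{z\}$ is relatively open in $\supp(\Ann(T))$, so its complement is relatively closed in a compact set, hence compact. Theorem \ref{T:KNotEmptyIntersectNotEmpty} then yields $\{z\}\cap\TSpec(T)\neq\varnothing$, which is exactly the conclusion $z\in\TSpec(T)$.

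The only subtlety is the degenerate case $\supp(\Ann(T))=\{z\}$, in which $\supp(\Ann(T))\setminus\{z\}=\varnothing$. The proof of Theorem \ref{T:KNotEmptyIntersectNotEmpty} explicitly sets this case aside, so I would handle it directly. Because $\supp(\Ann(T))$ is non-empty, the ambient Hilbert space on which $T$ acts must be non-zero (otherwise $\Ann(T)=\M(\H)$ and one checks from the definition that the support would be empty), and hence $\TSpec(T)$ is non-empty by \cite[Proposition 7.6]{muller2007} as invoked in Subsection \ref{SS:multispectra}. Lemma \ref{L:wSpecInSupp} together with $\TSpec(T)\subset\wSpec(T)$ then forces $\TSpec(T)\subset\supp(\Ann(T))=\{z\}$, giving $z\in\TSpec(T)$.

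There is no real obstacle here; the corollary is essentially a repackaging of Theorem \ref{T:KNotEmptyIntersectNotEmpty}, with the only care needed being the verification of compactness of the two pieces and the treatment of the degenerate case above. Notice also that $K$-purity is used only through Theorem \ref{T:KNotEmptyIntersectNotEmpty}, which is where the functional model structure enters.
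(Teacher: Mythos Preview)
Your proposal is correct and follows exactly the paper's approach: the paper's proof is the single sentence ``Simply apply Theorem \ref{T:KNotEmptyIntersectNotEmpty} with any isolated point playing the role of the compact set.'' Your added care in verifying compactness of the complement and in treating the degenerate case $\supp(\Ann(T))=\{z\}$ separately is entirely appropriate (and arguably fills in a step the paper leaves implicit in the proof of Theorem \ref{T:KNotEmptyIntersectNotEmpty} itself), but it does not constitute a different route.
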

\begin{proof}
Simply apply Theorem \ref{T:KNotEmptyIntersectNotEmpty} with any isolated point playing the role of the compact set.
\end{proof}

The previous statement can be significantly refined in the case where the annihilator has a discrete zero set. Before proceeding, we require the following lemma.
%(Sketch the nilpotent extraction Theorem from interpoltation (4.3))
\begin{lemma}
    Let $\H$ be a regular unitarily invariant space with kernel $K$.
	Let $T=(T_1,\dots,T_d)$ be an absolutely continuous commuting $K$-contraction. Let $z$ be an isolated point of $\ZBd(\Ann (T))$ such that $\TSpec(T)=\{z\}$. Then, there is a $d$-tuple $N$ of commuting nilpotent operators such that $T=zI+N.$
	\label{L:NilpotentLem}
\end{lemma}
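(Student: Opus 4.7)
The plan is to prove nilpotency of each $T_j - z_j I$ by combining analytic local algebra with the Taylor functional calculus. Write $\fa = \Ann(T)$, let $\O_z$ denote the ring of convergent power series at $z$ with maximal ideal $\fm_z = (x_1 - z_1, \ldots, x_d - z_d)\O_z$, and let $I \subset \O_z$ be the ideal generated by the germs at $z$ of the multipliers in $\fa$.

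First I would verify that the germ at $z$ of the analytic variety $V(I)$ reduces to $\{z\}$. Since $\O_z$ is Noetherian, $I$ admits finitely many generators, which may be taken to be germs $(f_1)_z, \ldots, (f_k)_z$ of elements $f_1, \ldots, f_k \in \fa$. If a point $w$ near $z$ satisfies $f_i(w) = 0$ for all $i$, then for any $f \in \fa$ the relation $f_z = \sum_i g_i (f_i)_z$ (with $g_i \in \O_z$) lifts to $f = \sum_i g_i f_i$ on a neighbourhood of $z$, forcing $f(w) = 0$ and thus $w \in \ZBd(\fa)$; the isolation hypothesis then forces $w = z$. By R\"{u}ckert's analytic Nullstellensatz, $\sqrt{I} = \fm_z$, so $\fm_z^n \subset I$ for some $n \geq 1$. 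In particular, for each $1 \leq j \leq d$ there exist $f_1, \ldots, f_k \in \fa$ and $g_1, \ldots, g_k \in \O_z$ with
\[ (x_j - z_j)^n = \sum_{i=1}^k g_i (f_i)_z \quad \text{in } \O_z, \]
and on a sufficiently small open ball $W \subset \bB_d$ around $z$, this becomes an identity of holomorphic functions on $W$.

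Next I would transfer this identity to $\B(\fH)$. Since $\TSpec(T) = \{z\} \subset W$, the Taylor functional calculus $\tau_{T, W} \colon \O(W) \to \B(\fH)$ is a well-defined unital algebra homomorphism sending $x_j \mapsto T_j$. Applying it produces
\[ (T_j - z_j I)^n = \sum_{i=1}^k \tau_{T, W}(g_i) \, \tau_{T, W}(f_i|_W). \]
To see that each $\tau_{T, W}(f_i|_W) = 0$, I would first invoke part (i) of Theorem \ref{T:Taylorprop} to identify $\tau_{T, W}(f_i|_W)$ with $\tau_{T, \bB_d}(f_i)$, since $W$ and $\bB_d$ are open neighbourhoods of $\TSpec(T)$ on which $f_i|_W$ and $f_i$ agree. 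Theorem \ref{T:TnMHFCAgree} applied with $U = \bB_d$ then gives $\tau_{T, \bB_d}(f_i) = \widehat{\alpha}_T(f_i) = 0$ because $f_i \in \fa = \Ann(T)$. Thus $(T_j - z_j I)^n = 0$ for every $j$, and setting $N := T - zI$ yields a commuting $d$-tuple of nilpotent operators with $T = zI + N$.

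The main obstacle lies in the first step: one must argue that the germ $V(I)$ reduces to the singleton $\{z\}$ even though $I$ is a priori defined via infinitely many germs. The argument above relies on the Noetherian property of $\O_z$ to cut down to finitely many generators, together with the fact that elements of $\fa$ are globally holomorphic on $\bB_d$, so that local vanishing of the generators near $z$ translates to vanishing in $\ZBd(\fa)$, where the isolation hypothesis applies.
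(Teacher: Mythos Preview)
Your overall strategy—applying R\"uckert's Nullstellensatz at $z$ to obtain $\fm_z^n \subset I$, then transferring the local identity $(x_j-z_j)^n=\sum_i g_i f_i$ to operators via the Taylor functional calculus and Theorem~\ref{T:TnMHFCAgree}—is correct and is essentially the route the paper imports from \cite[Theorem 4.3]{CTInterp}. The second half of your argument (invoking Theorem~\ref{T:Taylorprop}(i) and then Theorem~\ref{T:TnMHFCAgree} with $U=\bB_d$ to conclude $\tau_{T,W}(f_i|_W)=\widehat{\alpha}_T(f_i)=0$) is carried out correctly.

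There is, however, a gap in your verification that $V(I)=\{z\}$. Having chosen generators $(f_1)_z,\ldots,(f_k)_z$ of $I$ with $f_i\in\fa$, you argue that if $w$ near $z$ satisfies $f_i(w)=0$ for all $i$, then any $f\in\fa$ obeys $f=\sum_i g_i f_i$ on a neighbourhood of $z$, forcing $f(w)=0$ and hence $w\in\ZBd(\fa)$. But that neighbourhood depends on $f$, so for a fixed $w$ you cannot conclude $f(w)=0$ for \emph{every} $f\in\fa$; the remark about global holomorphy does not close this. One clean fix: decompose the germ of $\{f_1=\cdots=f_k=0\}$ at $z$ into irreducible components and pick, for each, an irreducible analytic representative $\tilde C$ in a fixed ball $B\subset\bB_d$ with $B\cap\ZBd(\fa)=\{z\}$. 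For every $f\in\fa$ the germ $f_z\in I$ vanishes on the germ of $\tilde C$, hence $f$ vanishes on a nonempty relatively open subset of $\tilde C$; since $\tilde C$ is irreducible and $f$ is holomorphic on $B$, the identity theorem for irreducible analytic sets gives $f|_{\tilde C}\equiv 0$, so $\tilde C\subset\ZBd(\fa)\cap B=\{z\}$. Thus every component is $\{z\}$ and $V(I)=\{z\}$ as needed. Equivalently, you may simply cite the standard fact that the common zero set of a family of holomorphic functions is, near each of its points, cut out by finitely many members of that family.
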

\begin{proof}
The proof of \cite[Theorem 4.3]{CTInterp} adapts verbatim.
\end{proof}

We now state our desired result. 

\begin{theorem}\label{T:PureDimZero}
Let $\H$ be a maximal regular unitarily invariant space with kernel $K$.
	Let $T=(T_1,\dots,T_d)$ be a $K$-pure commuting $K$-contraction.
Assume that $\Z_{\bB_d}(\Ann(T))$ is discrete. Then,
	\[ \sigma_{\mathrm{p}}(T)=\TSpec(T)\cap\BB_d=\ZBd(\Ann (T)). \]
	Furthermore, if $\supp(\Ann (T))$ is totally disconnected, then $\TSpec(T)=\supp(\Ann (T))$.
\end{theorem}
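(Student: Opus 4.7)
The plan is to combine Corollary \ref{C:LocDimZero}, Theorem \ref{T:KNotEmptyIntersectNotEmpty} and Lemma \ref{L:NilpotentLem}. Writing $\fa=\Ann(T)$, Corollary \ref{C:LocDimZero} already delivers
\[\sigma'(T)\cap\BB_d=\wSpec(T)\cap\BB_d=\ZBd(\fa),\]
so $\TSpec(T)\cap\BB_d\subset \ZBd(\fa)$ is free via \eqref{Eq:incTayComm}. Since $T$ is $K$-pure in a maximal space, Theorem \ref{T:pureKcontr} makes it $K$-asymptotically vanishing, so Lemma \ref{L:Kcontevalue} forces $\pSpec(T)\subset\BB_d$; coupled with \eqref{Eq:incTayPoint} this immediately yields $\pSpec(T)\subset\ZBd(\fa)$. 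Everything therefore reduces to proving the single inclusion $\ZBd(\fa)\subset\pSpec(T)$.

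The key preliminary observation I would make is that every $z\in\ZBd(\fa)$ is isolated not just in $\ZBd(\fa)$ but in the whole of $\supp(\fa)$: the points of $\supp(\fa)\cap\partial\BB_d$ lie at distance at least $1-\|z\|>0$ from $z$, and discreteness of $\ZBd(\fa)$ takes care of the interior points. Consequently $\{z\}$ is a non-empty compact subset of $\supp(\fa)$ whose complement is also compact, and Theorem \ref{T:KNotEmptyIntersectNotEmpty} produces $z\in\TSpec(T)$. To upgrade this to $z\in\pSpec(T)$, I would deploy the idempotent from Theorem \ref{T:Taylorprop}(iii) associated with a small open ball $U$ separating $z$ from $\TSpec(T)\setminus\{z\}$; setting $\fM=\ran\chi_U(T)$, the compression $T|_\fM$ is a non-zero absolutely continuous commuting $K$-contraction satisfying $\TSpec(T|_\fM)=\{z\}$. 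Lemma \ref{L:NilpotentLem} would then write $T|_\fM=zI+N$ with $N$ a $d$-tuple of commuting nilpotents, which visibly has $z$ as a joint eigenvalue. The degenerate case $\TSpec(T)=\{z\}$ needs no idempotent: Lemma \ref{L:NilpotentLem} applies directly to $T$ itself.

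For the second assertion, Theorem \ref{T:TotDiscon} already gives $\sigma'(T)=\supp(\fa)$ and thus $\TSpec(T)\subset\supp(\fa)$. For the reverse inclusion, I would fix $z\in\supp(\fa)$ and $r>0$ and invoke the fact that a compact, Hausdorff, totally disconnected space admits a basis of clopen sets to choose a clopen subset $C$ of $\supp(\fa)$ with $z\in C\subset B(z,r)$. Both $C$ and $\supp(\fa)\setminus C$ are then compact, so Theorem \ref{T:KNotEmptyIntersectNotEmpty} produces a point of $\TSpec(T)$ inside $C\subset B(z,r)$; letting $r\to 0$ and using closedness of $\TSpec(T)$ delivers $z\in\TSpec(T)$.

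The only real obstacle I anticipate is verifying that the hypotheses of Lemma \ref{L:NilpotentLem} genuinely transfer to $T|_\fM$. Absolute continuity will follow from the observation that $\chi_U(T)$ lies in $\{T\}''$ and therefore commutes with $f(T)$ for every $f\in\M(\H)$, so that $\fM$ is invariant under the whole $\M(\H)$-functional calculus of $T$. The relation $\Ann(T|_\fM)\supset\fa$ will then imply $\ZBd(\Ann(T|_\fM))\subset\ZBd(\fa)$, while Theorem \ref{T:SpecInZ} applied to $T|_\fM$ will place $z$ inside $\ZBd(\Ann(T|_\fM))$; discreteness of $\ZBd(\fa)$ finally guarantees that $z$ remains isolated there, which is the missing input for Lemma \ref{L:NilpotentLem}.
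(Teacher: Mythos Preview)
Your proposal is correct and follows essentially the same route as the paper's own proof: isolate $z\in\ZBd(\fa)$ in $\supp(\fa)$, invoke Theorem \ref{T:KNotEmptyIntersectNotEmpty} to place $z$ in $\TSpec(T)$, split off an invariant subspace via Theorem \ref{T:Taylorprop}(iii), and apply Lemma \ref{L:NilpotentLem} there to produce a joint eigenvector; the totally disconnected case is handled identically via clopen neighbourhoods and Theorem \ref{T:KNotEmptyIntersectNotEmpty}. The only cosmetic differences are that you package the easy inclusions through Corollary \ref{C:LocDimZero} and Theorem \ref{T:TotDiscon} rather than citing Lemma \ref{L:wSpecInSupp} and Theorem \ref{T:SpecInZ} directly, and that the paper spells out the joint eigenvector of $zI+N$ explicitly (take $\alpha$ of maximal length with $N^\alpha\neq 0$) where you say ``visibly''.
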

\begin{proof}
	Denote by $\frk{H}$ the Hilbert space on which $T_1,\ldots,T_d$ act. Put $\fa=\Ann(T)$.
	By \eqref{Eq:incTayComm}, \eqref{Eq:incTayPoint}, Theorem \ref{T:suppprop} and Lemma \ref{L:wSpecInSupp}, we have
	\begin{equation}\label{Eq:inclspec} \sigma_{\mathrm{p}}(T)\cap\BB_d\subset \TSpec(T)\cap\BB_d\subset\ZBd(\fa)\subset \supp(\fa). \end{equation}
	Fix $z\in\ZBd(\fa)$. Because $\ZBd(\fa)$ is assumed to be discrete, we see that $z$ is an isolated point of $\supp(\fa)$, so $z\in\TSpec(T)$ by Corollary \ref{C:IsoInTSpec}. 
	
	Next, we claim that there is a closed $T$-invariant subspace $\fX_1\subset \fH$ such that $\TSpec(T|_{\frk{X}_1})=\{z\}$. If $\TSpec(T)=\{z\}$ we may simply choose $\fX_1=\fH$, so we assume that $\TSpec(T)\setminus\{z\}$ is non-empty. It follows from part (iii) of Theorem \ref{T:Taylorprop} that there are $T$-invariant subspaces $\frk{X}_1,\frk{X}_2$ such that $\frk{X}_1\cap\frk{X}_2=\{0\}$, $\frk{X}_1+\frk{X}_2=\frk{H}$, and
	\[ \TSpec(T|_{\frk{X}_1})=\{z\}, \quad \TSpec(T|_{\frk{X}_2})=\TSpec(T)\bksl\{z\}. \]
	Clearly, we have $\fa\subset \Ann (T|_{\mc{X}_1})$ so that $\ZBd(\Ann (T|_{\mc{X}_1}))\subset \ZBd(\fa)$. Arguing as above, we know that 
	\[
	 \TSpec(T|_{\mc{X}_1})\cap\BB_d\subset \ZBd(\Ann (T|_{\mc{X}_1}))
	\]
    so $z$ is an isolated point of $\ZBd(\Ann (T|_{\mc{X}_1}))$.
	By Lemma \ref{L:NilpotentLem}, we infer that $T|_{\frk{X}_1}=zI+N$ for a $d$-tuple $N$ of commuting nilpotent operators on $\frk{X}_1$.
	Choose $\alpha\in\NN^d$ with maximal length such that $N^\alpha\neq 0$. Then, $N_jN^\alpha=0$ for $j=1,\dots,d$.
	There exists $h\in\frk{X}_1$ such that $N^\alpha h\neq 0$, and so
	\[ T_jN^\alpha h=(z_jI+N_j)N^\alpha h=z_j N^\alpha h. \]
	This shows that $z\in\sigma_{\mathrm{p}}(T)$. We conclude from \eqref{Eq:inclspec} that
	\[ \sigma_{\mathrm{p}}(T)\cap\BB_d=\TSpec(T)\cap\BB_d=\ZBd(\fa). \]
    In view of Lemma \ref{L:Kcontevalue}, we see that $\pSpec(T)\subset \bB_d$ so in fact we have
    \[ \sigma_{\mathrm{p}}(T)=\TSpec(T)\cap\BB_d=\ZBd(\fa). \]

	Finally, assume that $\supp(\fa)$ is totally disconnected and that $z$ lies in $\supp(\fa)$. If $\supp(\fa)=\{z\}$, then we see that $\TSpec(T)=\{z\}$ by Lemma \ref{L:wSpecInSupp}. Assume now that $\supp(\fa)\setminus\{z\}$ is non-empty.	Hence, there is $r_0>0$ small enough so that $\supp(\fa)$ is not contained in $B(z,r_0)$. Let $0<r<r_0$. As in the proof of Theorem \ref{T:TotDiscon}, there is an open set $U\subset B(z,r)$  containing $z$ such that $\supp(\frk{a})\cap U$ is open and closed in $\supp(\frk{a})$. Put  $C= \supp(\frk{a})\cap U$. Then, $C$ and $\supp(\frk{a})\bksl C$ are non-empty, disjoint and compact.
	It follows from Theorem \ref{T:KNotEmptyIntersectNotEmpty} that $C\cap \TSpec(T)$ is non-empty.
	Thus $B(z,r)\cap \TSpec(T)$ is non-empty for every $0<r<r_0$, and so $z\in \TSpec(T)$ as $\TSpec(T)$ is closed. We conclude that $\supp(\frk{a})\subset \TSpec(T)$, and Lemma \ref{L:wSpecInSupp} shows that in fact equality holds.
\end{proof}

We end with a finite-dimensional example in the case where $\H$ is the Drury--Arveson space.

\begin{example}
	Let $T_1,\dots,T_d$ be commuting  $n\times n$ complex matrices such that $T_1T_1^*+\dots+T_dT_d^*<I$. Thus, $T=(T_1,\ldots,T_d)$ is a pure commuting row contraction, and hence a $K$-pure commuting $K$-contraction, where $K$ is the kernel of the Drury--Arveson space. We put $\fa=\Ann(T)$. For each $j=1,\ldots,d$, we let $q_j$ denote the minimal polynomial of $T_j$. We define $r_1,\ldots,r_d\in \bC[x_1,\ldots,x_d]$ as $r_j=q_j(x_j)$ for every $j=1,\ldots,d$. Then, we find $r_1,\dots,r_d\in\fa$, whence \[\supp(\fa)\subset \ZcBd(r_1)\cap\dots\cap\ZcBd(r_d)\] by Corollary \ref{C:AHDense}.
	Thus $\supp(\fa)$ is finite so
	\[\TSpec(T)\cap\BB_d=\sigma_{\mathrm{p}}(T)=\ZBd(\fa) \qand \TSpec(T)=\supp(\fa)\] by Theorem \ref{T:PureDimZero}.
	But $T_1T_1^*+\dots+T_dT_d^*<I$ so $\TSpec(T)\subset \wSpec(T)\subset \bB_d$ and thus
	\[ \TSpec(T)=\sigma_{\mathrm{p}}(T)=\ZBd(\fa) \]
	is finite.	By part (iii) of Theorem \ref{T:Taylorprop} and Lemma \ref{L:NilpotentLem}, we see that $T$ is similar to $D=\bigoplus_{z\in\ZBd(\fa)}(zI+N^{(z)})$ where $N^{(z)}$ is a commuting $d$-tuple of nilpotent operators for each $z\in\ZBd(\fa)$.
	
	We claim that $\ZBd(\fa)=\ZBd(\fa\cap\CC[x_1,\ldots,x_d])$, so that the polynomial part of $\fa$ suffices to determine the spectrum of $T$, as one would expect.
	It suffices to show that the polynomial ideal $\frk{p}=\frk{a}\cap\CC[x_1,\ldots,x_d]$ is weak-$*$ dense in $\frk{a}$.
	Note that there exists a positive integer $k$ such that any analytic function $g$ that vanishes to at least order $k$ at each point of $\ZBd(\fa)$ must satisfy $g(D)=0$, and hence $g(T)=0$ by part (ii) of Theorem \ref{T:Taylorprop}.
	With $\frk{b}$ denoting the ideal of those functions in $\M(\H)$ that vanish to at least order $k$ on $\ZBd(\fa)$, we have $\frk{b}\subset \frk{a}$.
	An application of Theorem \ref{T:GleasonTrick} shows that $\fb$ is generated by polynomials and thus $\fb$ is contained in the weak-$*$ closure of $\fp$.
	For any $f\in\fa$ there is a polynomial $p$ such that $f-p\in\fb$, % vanishes to high order on the finite set $\ZBd(\fa)$,
	and so for such $p$ we have $p(T)=0$.
	Thus $\fa\subset \fp+\fb$, and $\fa$ is the weak-$*$ closure of $\fp$.
	\qed
\end{example}

% Bibliographic material %%%%%%%%%%%%%%%%%%%%%%%%%%%%%%%%%%%%%%%%%%%%%%%%%%%
\bibliographystyle{plain}
%\bibliography{/home/ed/Dropbox/32_DaBigBib/biblio_main.bib}
\bibliography{biblio_main.bib}

\end{document}